\newtheorem{theorem}{Theorem}[section]
\newenvironment{theoremr}[1]{%
  \begin{theorem}%
}{%
  \end{theorem}%
  \addtocounter{theorem}{-1}%
}
\theoremstyle{plain}
\newtheorem{claim}{Claim}[section]
\newtheorem{corollary}{Corollary}[section]
\newenvironment{corollaryr}[1]{%
  \begin{corollary}%
}{%
  \end{corollary}%
  \addtocounter{corollary}{-1}%
}
\newtheorem{definition}{Definition}[section]
\newtheorem{example}{Example}[section]
\newtheorem{lemma}{Lemma}[section]
\newtheorem{proposition}{Proposition}[section]
\newenvironment{propositionr}[1]{%
  \begin{proposition}%
}{%
  \end{proposition}%
  \addtocounter{proposition}{-1}%
  }
\newtheorem{remark}{Remark}[section]
\numberwithin{equation}{section}
\newcommand{\Z}{\mathbb Z}
\newcommand{\im}{\operatorname{im}}
\newcommand{\Hom}{\operatorname{Hom}}
\newcommand{\Div}{\operatorname{div}}
\newcommand{\res}{\operatorname{res}}
\newcommand{\rk}{\operatorname{rk}}
\newcommand{\td}{\operatorname{td}}
\newcommand{\ch}{\operatorname{ch}}
\begin{document}

\title[Dualization invariance and a new complex elliptic genus]{Dualization invariance and a new complex elliptic genus}
\author{Stefan Schreieder} 
\address{Mathematisches Institut, LMU München, Theresienstr. 39, 80333 München, Germany}
\date{September 14, 2012; \copyright{\ Stefan~Schreieder 2012}}
\email{stefan.schreieder@googlemail.com}%
\subjclass[2000]{primary 58J26, 57R77; secondary 57R20, 55N22}

\begin{abstract}
We define a new elliptic genus $\psi$ on the complex bordism ring.
With coefficients in $\mathbb Z[1/2]$, we prove that it induces an isomorphism of the complex bordism ring modulo the ideal which is generated by all differences $\mathbb P(E)-\mathbb P(E^\ast)$ of projective bundles and their duals onto a polynomial ring on $4$ generators in degrees $2$, $4$, $6$ and $8$.
As an alternative geometric description of $\psi$, we prove that it is the universal genus which is multiplicative in projective bundles over Calabi-Yau 3-folds.
With the help of the $q$-expansion of modular forms we will see that for a complex manifold $M$, the value $\psi(M)$ is a holomorphic Euler characteristic.
We also compare $\psi$ with Krichever-Höhn's complex elliptic genus and see that their only common specializations are Ochanine's elliptic genus and the $\chi_y$-genus.
\end{abstract}

\maketitle

\section{Introduction}
\label{sec:Introduction}
A problem in the theory of bordism and genera that has led to a number of important works in the past is the determination of the quotient of a bordism ring by a geometrically defined ideal.
We explain this with the help of some examples.

In the 1980's, S. Ochanine defined the ideal $\mathcal I_{Oc}$ in the oriented bordism ring $\Omega_\ast^{SO}$ to be generated by all projectivizations $\mathbb P(E)$ of complex vector bundles $E$ of even rank.
Then Ochanine's Theorem \cite{ochanine} states 
that the quotient $\left(\Omega_\ast^{SO}/\mathcal I_{Oc}\right)\otimes \mathbb Q$ is isomorphic to a polynomial ring $\mathbb Q[\delta, \epsilon]$ in two formal variables $\delta$ and $\epsilon$ of degrees $4$ and $8$. 
Amazingly, the genus, i.e. ring homomorphism, $\varphi_{Oc}$ from $\Omega_\ast^{SO}\otimes \mathbb Q$ to $\mathbb Q[\delta,\epsilon]$ which induces this isomorphism is closely related to a family of elliptic functions. 
Indeed, with the help of Hirzebruch's correspondence between formal power series and genera -- see section \ref{sec:dualellgen} for more details -- Ochanine's genus $\varphi_{Oc}$, respectively its logarithmic power series $g_{\varphi_{Oc}}(y)$ is defined via the elliptic integral
\[
g_{\varphi_{Oc}}(y)=\int_0^y\frac{dt}{\sqrt{1-2\delta t^2+\epsilon t^4}} \ .
\]
This is the reason why the genus $\varphi_{Oc}$ is called an elliptic genus.

Shortly after the introduction of Ochanine's genus, results of S. Ochanine \cite{ochanine3} and C. Taubes \cite{taubes} showed that Ochanine's elliptic genus $\varphi_{Oc}$ in fact is the universal genus which is multiplicative in fiber bundles of spin manifolds with structure group a compact connected Lie group. 

In order to discuss multiplicativity in the complex bordism ring $\Omega_\ast^U$, one defines the ideal $\mathcal M^U$ in $\Omega_\ast^U$ to be generated by differences $E-B\cdot F$, where $F\rightarrow E\rightarrow B$ ranges over all fiber bundles of stably almost complex manifolds with structure group a compact connected Lie group.
Then it turns out that the $\chi_y$-genus induces an isomorphism of graded $\mathbb Q$-algebras
\begin{equation*} \label{eq:thmchiy}
\left(\Omega_\ast^{U}/\mathcal M^{U}\right)\otimes \mathbb Q \cong \mathbb Q[s_1,s_2] \ ,
\end{equation*}
where $s_1=\chi_y(\mathbb CP^1)$ and $s_2=\chi_y(\mathbb CP^2)$ can be regarded as formal variables of degrees $2$ and $4$, \cite[p. 64]{Hohn91}.
Essentially, this result was already proven in the early 1970's by G. Lusztig, C. Kosniowski, and M.F. Atiyah and F. Hirzebruch, \cite[p. 69]{HirzManiModForms}.

The starting point of this paper is the definition of a certain subideal of $\mathcal M^U$.
Namely, we define the ideal $\mathcal I^U$ in $\Omega_\ast^U$ to be generated by differences $ \mathbb P(E)-\mathbb P(E^\ast)$, where $E\rightarrow B$ ranges over all complex vector bundles over stably almost complex bases $B$ and $E^\ast$ denotes the dual bundle of $E$.  
Clearly, the above definition gives a subideal $\mathcal I^U$ of $\mathcal M^{U}$ and we raise the question of determining this ideal, respectively the quotient $\Omega_\ast^U/\mathcal I^U$, at least rationally.
Equivalently, we call a genus $\varphi$ dualization invariant if 
\[
\varphi(\mathbb P(E))=\varphi(\mathbb P(E^\ast))
\] 
holds for all complex vector bundles $E$ and raise the question of determining the universal dualization invariant genus on $\Omega_\ast^U\otimes \mathbb Q$.
In order to solve the above problem, we will in Definition \ref{def:phiell} define a new genus $\psi$ from the rational complex bordism ring to a polynomial ring over $\mathbb Q$ in four variables $q_1,q_2,q_3$ and $q_4$ in degrees $2,4,6$ and $8$ via the logarithmic power series
\[
g_\psi(y)=\int_0^y\frac{dt}{\sqrt{1+q_1t+q_2t^2+q_3t^3+q_4t^4}} \ .
\]
Since a genus on the complex bordism ring descends to a genus on the oriented bordism ring if and only if its logarithmic power series is odd, we see that by definition, $\psi$ is a natural generalization of Ochanine's elliptic genus $\varphi_{Oc}$ to the complex bordism ring.
Therefore, and since the logarithm $g_\psi$ is an elliptic integral, I like to think of $\psi$ as a complex elliptic genus, which also explains the title of this paper.
However, in the literature the term \textit{complex elliptic genus} is already reserved for Krichever-Höhn's complex elliptic genus and so we rather use the neutral term \textit{$\psi$-genus} for the new genus.

The main result of this paper then solves the problem, raised above, with coefficients in $\mathbb Z[1/2]$ rather than $\mathbb Q$, which is clearly a stronger statement:

\begin{theoremr}{thm1}
The $\psi$-genus restricted to $\Omega_\ast^U\otimes \mathbb Z[1/2]$ induces the following isomorphism of graded rings:
\[
\left(\Omega_\ast^U/\mathcal I^U\right) \otimes \mathbb Z\left[1/2\right] \cong \mathbb Z\left[1/2\right][q_1,q_2,q_3,q_4] \ .
\]
\end{theoremr}

At this point, let us briefly discuss the obvious notion of dualization invariance in the oriented bordism ring.
To begin with, note that a hermitian metric on a complex vector bundle $E$ induces a complex antilinear isomorphism $E\rightarrow E^\ast$.
This induces a diffeomorphism between $\mathbb P(E)$ and $\mathbb P(E^\ast)$ which is easily seen to be orientation reversing if and only if $\rk (E)$ is even.
This explains that after inverting $2$ in the oriented bordism ring, Ochanine's ideal $\mathcal I_{Oc}$ equals the ideal which is generated by differences $\mathbb P(E)-\mathbb P(E^\ast)$ in $\Omega_\ast^{SO}\otimes \mathbb Z [1/2]$. 
Since Ochanine's theorem also holds with coefficients in $\Z[1/2]$, see \cite{landweber_ravenel_stong}, we obtain a new interpretation of Ochanine's result:
\textit{Ochanine's elliptic genus $\varphi_{Oc}$ is the universal dualization invariant genus on $\Omega_\ast^{SO}\otimes \Z [1/2]$.}
In view of Theorem \ref{thm1}, this uncovers another close relationship between Ochanine's elliptic genus and the new $\psi$-genus.

With slightly more effort, one also proves that after inverting $2$, Ochanine's elliptic genus is the universal dualization invariant genus on the spin bordism ring.
This result is not included here, but can be found in the author's Master Thesis \cite{master}.

As mentioned earlier, another complex version of Ochanine's elliptic genus, namely Krichever-Höhn's complex elliptic genus $\varphi_{KH}$, has already been studied in detail in the past.
G. Höhn in \cite{Hohn91} and I. Krichever in \cite{krichever} showed that this genus is the universal genus on the rational bordism ring of $SU$-manifolds, which is multiplicative in fiber bundles of $SU$-manifolds with compact connected structure group.
In particular, we observe that both elliptic genera known so far, $\varphi_{Oc}$ as well as $\varphi_{KH}$, can be described in terms of multiplicativity and it is natural to ask\footnote{This question was posed to me by B. Totaro.} whether there is a similar description for the new genus $\psi$.
Our answer is:

\begin{theoremr}{thm:multiplic}
The $\psi$-genus is the universal genus on the rational complex bordism ring which is multiplicative in projectivizations $\mathbb P(E)$ of complex vector bundles $E\rightarrow B$ over Calabi-Yau $3$-folds $B$.
\end{theoremr}

In view of the above Theorem, one might conjecture that the $\psi$-genus is multiplicative in all fiber bundles over Calabi-Yau $3$-folds with structure group a compact connected Lie group.
This is one direction for future research.

Historically, the comparison of projectivizations $\mathbb P(E)$ and $\mathbb P(E^\ast)$ in $\Omega_\ast^U$ arose first from the observation that they are diffeomorphic. 
Choosing $E\rightarrow B$ to be a holomorphic vector bundle over a complex base $B$, we therefore obtain a huge family of examples of two possibly different complex structures $\mathbb P(E)$ and $\mathbb P(E^\ast)$ on the same underlying smooth manifold.
For instance, in \cite{kotschick_terzic} D. Kotschick and S. Terzi\'c compared the Chern numbers (which are complex bordism invariants) of the projective tangent $\mathbb P (T \mathbb CP^n)$ and cotangent bundle $\mathbb P (T^\ast \mathbb CP^n)$ of the complex projective $n$-space.
For $n=3$ they found -- see table \ref{table:cp3} in section \ref{sec:dualinv_chern_numbers} -- that precisely the Chern numbers different from $c_5$, $c_1c_4$, $c_1^2c_3$ and $c_2c_3$ differ on  $\mathbb P(T \mathbb CP^3)$ and $\mathbb P(T^\ast \mathbb CP^3)$.
This shows firstly that the two complex structures are different, and secondly that the Chern numbers $c_1^5$, $c_1^3c_2$ and $c_1c_2^2$ are not invariant under diffeomorphisms.
On the other hand, this concrete calculation raises the question of determining those linear combinations of Chern numbers that are dualization invariant, i.e. whose value on $\mathbb P(E)$ and $\mathbb P(E^\ast)$ always coincides. 
In section \ref{sec:dualinv_chern_numbers} we explain that by Theorem \ref{thm1} the vector space of dualization invariant linear combinations of Chern numbers in complex dimension $n$ is isomorphic to the dual space of the degree $2n$ part of $\mathbb Q[q_1,q_2,q_3,q_4]$.
Moreover, as examples of pure Chern numbers which are dualization invariant we find: 

\begin{propositionr}{prop:dualinv_chern_num}
In complex dimension $n$, the Chern numbers $c_n$, $c_1c_{n-1}$, $c_1^2c_{n-2}$ and $c_2c_{n-2}$ are dualization invariant.
\end{propositionr}

Before Kotschick and Terzi\'c's calculations in \cite{kotschick_terzic}, F. Hirzebruch studied the Chern numbers of the projective tangent and cotangent bundle $\mathbb P(TB)$ and $\mathbb P(T^\ast B)$ of Calabi-Yau 3-folds $B$ in \cite{hirzcalabi} and showed that in $\Omega_\ast^U$ the following identity holds for all Calabi-Yau 3-folds $B$:
\[
\mathbb P(TB)+\mathbb P(T^\ast B)=2\cdot B\times \mathbb CP^2 \ .
\]
In section \ref{sec:thmabc}, inspired by this observation, we define for every nontrivial triple of integers $(a,b,c)$ the ideal $\mathcal I^U_{(a,b,c)}$ in $\Omega_\ast^U$ to be generated by linear combinations 
\[
a\cdot \mathbb P(E)+b\cdot \mathbb P(E^\ast)+c\cdot B\times \mathbb CP^{k}\ ,
\] 
where $E\rightarrow B$ is some complex vector bundle of rank $k+1$.
Rationally this ideal coincides with the whole bordism ring whenever $a+b+c\neq 0$ holds, so that we restrict ourselves to the case $a+b+c=0$. 
In order to state our result, we denote the localization $\mathbb Z[1/n]$ of $\mathbb Z$ at a nontrivial element $n\in \mathbb Z$ by $\mathbb Z_{n}$. Let us also recall that the images $s_1$ and $s_2$ of $\mathbb CP^1$ and $\mathbb CP^2$ under the $\chi_y$-genus can be regarded as formal variables in degrees $2$ and $4$.

\begin{theoremr}{thm3}
Let $(a,b,c)$ be a nontrivial triple of integers with $a+b+c=0$. 
\begin{enumerate}
\item For $c=0$ the $\psi$-genus induces an isomorphism 
	\[
	\left(\Omega^U_{\ast}/\mathcal I_{(a,-a,0)}^U \right) \otimes \mathbb Z_{2a} \cong \ \mathbb Z_{2a} [q_1,q_2,q_3,q_4]\ .
	\] 
\item For $c\neq0$ the $\chi_y$-genus induces an isomorphism 
	\[
	\left(\Omega^U_{\ast}/\mathcal I_{(a,b,c)}^U \right) \otimes \mathbb Z_{a+b}\cong \ \mathbb Z _{a+b} [s_1,s_2]\ .
	\] 
\end{enumerate}
\end{theoremr}

In this introduction we already explained that it was proven in the 1970's that rationally the quotient $\Omega^U_{\ast}/\mathcal M^U $ is a polynomial ring in two variables $s_1$ and $s_2$ of degrees $2$ and $4$.  
Although it might be known to some experts that the same is true integrally, this result is not contained in the literature so far.
With the help of the second statement in Theorem \ref{thm3}, we close this gap here: 

\begin{corollaryr}{cor:chiy}
The $\chi_y$-genus induces the following isomorphism of graded rings:
\[
 \Omega^U_{\ast}/\mathcal M^U  \cong \mathbb Z [s_1,s_2]\ .
\]
\end{corollaryr}

In section \ref{sec:KHgenus}, we compare our $\psi$-genus with Krichever-Höhn's complex elliptic genus $\varphi_{KH}$ and see that they are genuinely different.
More precisely:

\begin{propositionr}{prop:dualinvgenneqHöhngen}
Let $R$ be an integral $\mathbb Q$-algebra and $\varphi$ an $R$-valued genus which factors through both $\psi$ and $\varphi_{KH}$.
Then $\varphi$ already factors through $\chi_y$ or $\varphi_{Oc}$.
\end{propositionr}

By definition of $\psi$, the coefficients of the characteristic power series $Q_\psi(x)$ of $\psi$ are families of modular forms, and in section \ref{sec:cusp} we will use the $q$-expansion of modular forms in order to give an alternative description of the power series $Q_\psi(x)$.
For a complex manifold $M$, this will allow us in Proposition \ref{prop:psi=eulerchar} to interpret $\psi(M)$ as the holomorphic Euler characteristic of a certain vector bundle associated to the tangent bundle of $M$.

At the end of this introduction, let us remark the following:
In the definition of the ideals $\mathcal I^U$ and $\mathcal I^U_{(a,b,c)}$ the involved bundles $E$ ranged over all complex vector bundles over stably almost complex bases.
However, since all concrete examples we will use in the proofs of Theorems \ref{thm1} and \ref{thm3} involve holomorphic bundles over algebraic bases, the Theorems still hold true if one defines the above ideals in the more restrictive algebraic category. 
\\

\paragraph{\textbf{Outline}}
In section \ref{sec:intideal} we recall some basic facts about the complex bordism ring and compute the total Chern class, as well as the Thom-Milnor number of projectivizations $\mathbb P(E)$ of complex vector bundles $E$.
In section \ref{sec:dualellgen} we introduce and study first properties of the $\psi$-genus, and in section \ref{sec:mainthm} we prove the main result of this paper, Theorem \ref{thm1}. 
In section \ref{sec:multiplicativity} we give the proof of Theorem \ref{thm:multiplic}, and in section \ref{sec:dualinv_chern_numbers} we study dualization invariant Chern numbers.
In section \ref{sec:thmabc} we prove Theorem \ref{thm3} and in section \ref{sec:KHgenus} we compare $\psi$ with Krichever-Höhn's complex elliptic genus $\varphi_{KH}$.
Finally, only using the results of section \ref{sec:dualellgen}, we discuss the $q$-expansion of $\psi$ in section \ref{sec:cusp}.
\\

\paragraph{\textbf{Conventions}}
The following conventions are used throughout this paper: 
All manifolds are compact, oriented and smooth, if not mentioned otherwise.
The evaluation of a top degree cohomology class $\omega$ of some oriented manifold $M$ on the fundamental class $[M]$ is denoted by $\int_M\omega$. 
However, as long as we do not specify the coefficients, all cohomology groups we are investigating will have coefficients in $\mathbb Z$.
Moreover, for a mixed degree cohomology class $\omega\in H^{\ast}(M)$, the integral $\int_M\omega$ denotes the evaluation of the top degree component of $\omega$ on the fundamental class $[M]$.

\section{Projectivizations in the complex bordism ring} \label{sec:intideal}

\subsection{Description of the complex bordism ring following Milnor and Novikov} \label{sec:complexbordism}

The complex bordism ring $\Omega^{U}_{\ast}$ is the bordism ring of closed stably almost complex manifolds modulo boundaries of compact stably almost complex manifolds.
This ring is graded by the real dimension of manifolds and the ring structure is induced by the disjoint union and Cartesian product of manifolds.
In order to explain the structure of $\Omega^{U}_{\ast}$, let us recall that the Thom-Milnor number $s_{m}(M)$ of a closed stably almost complex manifold $M$ in real dimension $2m$ is defined by 
\begin{equation} \label{def:milnornrSO}
s_{m}(M):=\int_{M}\sum_{i=1}^p{w_i^{m}} \ ,
\end{equation}
where $w_1,\ldots ,w_p$ denote the Chern roots of $M$.
By the splitting principle, Chern roots are formal cohomology classes of degree $2$, 
so that the total Chern class of $M$ equals $\prod_i(1+w_i)$. 
Since $s_m(M)$ is symmetric in the $w_i$'s, it is a polynomial expression in the elementary symmetric polynomials in $w_1,\ldots ,w_p$, which are the Chern classes of $M$.
Therefore, $s_m(M)$ is a certain linear combination of Chern numbers of $M$. 
The following structure Theorem is due to J. W. Milnor and S. P. Novikov \cite[pp. 117 and 128]{stong}:

\begin{theorem} \label{thm:milnor}
Two stably almost complex manifolds represent the same element in $\Omega_\ast^{U}$ if and only if all their Chern numbers coincide. 
Moreover, $\Omega^{U}_{\ast}$ is a polynomial ring $\mathbb Z[x_1,x_2,x_3,\ldots]$ where $x_m$ has degree $2m$ and a sequence $\left(x_m\right)_{m\geq 1}$ with $x_m\in \Omega^{U}_{2m}$ is a basis sequence if and only if
\begin{align*}
s_m(x_m)=
\begin{cases}
&\pm p\ \ \text{if $m+1$ is a power of the prime $p$,}\\
&\pm 1\ \ \text{if $m+1$ is not a prime power.}
\end{cases}
\end{align*}
\end{theorem}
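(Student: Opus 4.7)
The plan is to translate the statement into stable homotopy theory and then carry out the standard Milnor--Novikov calculation. First, the Pontryagin--Thom construction identifies $\Omega^U_\ast \cong \pi_\ast(MU)$, where $MU$ is the Thom spectrum of the universal stable complex vector bundle. Under this identification, the Chern numbers of a stably almost complex manifold $M$ are precisely the values obtained by evaluating monomials $c_{i_1}\cdots c_{i_k}\in H^\ast(BU;\mathbb Z)$, transported to $H^\ast(MU;\mathbb Z)$ via the Thom isomorphism, on the Hurewicz image of $[M]$.

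Next I would compute $H_\ast(MU;\mathbb Z)$. Since $H^\ast(BU;\mathbb Z)\cong \mathbb Z[c_1,c_2,\ldots]$ is a torsion-free polynomial ring, the Thom isomorphism shows that $H_\ast(MU;\mathbb Z)$ is torsion-free and a polynomial ring $\mathbb Z[b_1,b_2,\ldots]$ with $|b_m|=2m$. Rationally, a theorem of Serre (using that the mod-$p$ cohomology of $MU$ is a free module over the Steenrod algebra) implies that the Hurewicz map induces an isomorphism $\pi_\ast(MU)\otimes \mathbb Q\to H_\ast(MU;\mathbb Q)$. Combined with Milnor's theorem that $\pi_\ast(MU)$ is itself torsion-free, this shows the Hurewicz map is injective. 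Since the Chern numbers form a dual basis to an integral basis of $H_\ast(MU;\mathbb Z)$, they separate bordism classes, proving the first assertion of the theorem.

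For the polynomial structure and the characterization of basis sequences, the central input is the integral analysis of the image of the Hurewicz map in $H_\ast(MU;\mathbb Z)$. Modulo decomposables, the indecomposables of $H_\ast(MU;\mathbb Z)$ in degree $2m$ are generated by a distinguished primitive element $\pi_m$, and pairing $\pi_m$ against a class $[M]\in \pi_\ast(MU)$ recovers $\pm s_m(M)$ --- this is because the Newton polynomial $\sum_i w_i^m$ in the Chern roots is exactly the dual primitive under the natural pairing $H^\ast(BU)\otimes H_\ast(BU)\to \mathbb Z$. Milnor's theorem then asserts that the image of the $2m$-th Hurewicz stage in these indecomposables equals $n_m\mathbb Z\cdot \pi_m$, where $n_m=p$ if $m+1$ is a power of the prime $p$ and $n_m=1$ otherwise. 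This yields exactly the divisibility criterion in the theorem: $(x_m)_{m\geq 1}$ is a polynomial basis of $\pi_\ast(MU)$ if and only if each $x_m$ realizes the minimal possible value of $|s_m|$.

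The main obstacle is this integrality computation of the Hurewicz image, which is the deepest part of Milnor's work and uses a careful analysis of the primitive elements in $H_\ast(MU;\mathbb Z)$ prime by prime. The fact that the stated $n_m$ is actually attained can be verified geometrically by exhibiting concrete generators: the Milnor manifolds $H_{i,j}\subset \mathbb{CP}^i\times \mathbb{CP}^j$ (smooth hypersurfaces of bidegree $(1,1)$), together with the projective spaces $\mathbb{CP}^m$, provide a family of closed algebraic manifolds whose Thom--Milnor numbers are computable from $s_m(\mathbb{CP}^m)=m+1$, the multiplicativity of the total Chern class and the adjunction formula, and a $\gcd$ calculation shows that suitable linear combinations realize precisely $n_m$ in degree $2m$.
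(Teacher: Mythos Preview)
The paper does not prove this theorem at all: it is quoted as the classical structure theorem of Milnor and Novikov, with a bare reference to Stong's \textit{Notes on cobordism theory}. There is therefore nothing in the paper to compare your argument against; the theorem functions purely as background input for the rest of the paper.

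That said, your outline is essentially the standard Milnor--Novikov argument as presented in Stong, so in that sense it matches the intended reference. A couple of minor points worth tightening: the parenthetical invoking ``Serre'' together with freeness of $H^\ast(MU;\mathbb F_p)$ over the Steenrod algebra conflates two distinct inputs --- the rational Hurewicz isomorphism is Serre's rational homotopy result, whereas the Steenrod-module freeness is what feeds the Adams spectral sequence computation showing $\pi_\ast(MU)$ has no $p$-torsion (Milnor, Novikov). Also, the statement ``the image of the $2m$-th Hurewicz stage in the indecomposables equals $n_m\mathbb Z\cdot \pi_m$'' is exactly the crux and is not itself proved in your sketch; you correctly flag it as ``the main obstacle,'' but be aware that this is where essentially all the work lies, and a full proof requires either the Adams spectral sequence or the Hattori--Stong theorem. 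The geometric realization via Milnor hypersurfaces $H_{i,j}$ and the $\gcd$ computation is correct and standard.
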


As an example, let us recall that the complex projective space $\mathbb CP^m$ has the nontrivial Thom-Milnor number $s_m(\mathbb CP^m)=m+1$, so that Theorem \ref{thm:milnor} implies: 
\begin{equation} \label{rationalbasis}
\Omega^{U}_{\ast}\otimes \mathbb Q =\mathbb Q\left[\mathbb CP^1,\mathbb CP^2,\mathbb CP^3, \ldots \right]\ .
\end{equation}

\subsection{The cohomology ring and total Chern class of projectivizations}
By Theorem \ref{thm:milnor}, in order to understand an element in the complex bordism ring we need to know its Chern numbers. 
In this subsection we therefore discuss the cohomology ring and total Chern class of projectivizations of complex vector bundles.

Consider some complex rank $k$ vector bundle $E\rightarrow B$ over a stably almost complex base $B$.
We denote the associated projectivization, a stably almost complex $\mathbb CP^{k-1}$-fiber bundle, by $\pi:\mathbb P(E)\rightarrow B$.
Let $\mathcal S\rightarrow \mathbb P(E)$ be the tautological line bundle and write $y$ for the first Chern class of the dual bundle $\mathcal S^\ast$. 
Then $y$ restricted to every fiber $F=\mathbb CP^{k-1}$ of $\pi$ is a generator of the cohomology ring $H^{\ast}(F)$ with $\int_F y^{k-1}=1$.
Thus, the Leray-Hirsch theorem yields for the cohomology ring of $\mathbb P(E)$:
\begin{equation} \label{eq:LerayHirsch}
H^{\ast}(\mathbb P(E))=H^{\ast}(B)[y]\ /\left(y^k+c_1(E)y^{k-1}+\cdots +c_k(E)\right) \ ,
\end{equation}
where $c_i(E)$ denotes the $i$-th Chern class of $E$.
By this identity, we will always regard $H^{\ast}(B)$ as a subring in $H^{\ast}(\mathbb P(E))$.
Therefore, the pullback of a class $\omega \in H^{\ast}(B)$ to $H^{\ast}(\mathbb P(E))$ is also denoted by $\omega$.

By the above description, a general top degree cohomology class of $\mathbb P(E)$ has the form $\omega \cdot y^m$ with $\omega \in H^\ast (B)$ and some exponent $m\geq 0$.
In the following Lemma we explain how to reduce explicitly to the case of fixed exponent $m=k-1$, which is the complex dimension of the fiber of $\mathbb P(E)$.
In order to state this result (which, using a different terminology, can also be found in \cite[p. 47]{fulton}), we denote the total Chern class of $E$ by $c(E)$ and its multiplicative inverse in $H^\ast (B)$ (sometimes called Segre class) by $s(E)$.

\begin{lemma} \label{lem:alpha}
Let $\omega \in H^{\ast}(B)$ be a cohomology class of the base of fixed degree and $m\geq 0$ an integer such that $\omega \cdot y^m$ is a top degree cohomology class of $\mathbb P(E)$. 
Then $\omega \cdot y^m$ coincides with the top degree component of $\omega\cdot s(E) \cdot y^{k-1}$.
\end{lemma}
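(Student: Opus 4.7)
The plan is to push both sides of the claimed identity forward to $B$ along $\pi: \mathbb P(E) \to B$ using fibre integration $\pi_\ast$ and exploit that this operation is injective (in fact an isomorphism) on top cohomology. Indeed, $\pi_\ast$ lowers real cohomological degree by $2(k-1)$, is an $H^\ast(B)$-module map via the projection formula, and on top cohomology intertwines the two fundamental class integrations $\int_B$ and $\int_{\mathbb P(E)}$; so showing equality of the pushforwards is equivalent to the desired equality of top degree classes on $\mathbb P(E)$.

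A quick degree count shows that, for $\omega \cdot y^m$ to be top degree, one needs $\deg\omega + 2m = 2\dim B + 2(k-1)$, and since $\deg\omega \le 2\dim B$, this forces $m \ge k-1$. Set $t := m-(k-1) \ge 0$, so that $\deg\omega = 2\dim B - 2t$. The key technical input is then the standard formula
\[
\pi_\ast(y^i) =
\begin{cases}
0 & \text{if } 0 \le i \le k-2,\\
s_{i-(k-1)}(E) & \text{if } i \ge k-1,
\end{cases}
\]
which is essentially the geometric characterisation of Segre classes found in \cite[p.~47]{fulton}; it can be derived from the Leray-Hirsch relation \eqref{eq:LerayHirsch} together with $\int_{\mathbb CP^{k-1}} y^{k-1} = 1$ on each fibre, by an induction on $i$ that repeatedly applies $y^k = -\sum_{j=1}^k c_j(E) y^{k-j}$. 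By the projection formula, $\pi_\ast(\omega \cdot y^m) = \omega \cdot s_t(E)$.

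On the other side, expanding $\omega \cdot s(E) \cdot y^{k-1} = \sum_{j \ge 0} \omega \cdot s_j(E) \cdot y^{k-1}$ and matching degrees, only the summand $j=t$ contributes to the top degree part, equal to $\omega \cdot s_t(E) \cdot y^{k-1}$; its pushforward is $\omega \cdot s_t(E) \cdot \pi_\ast(y^{k-1}) = \omega \cdot s_t(E)$, agreeing with the pushforward of $\omega \cdot y^m$. Since $\pi_\ast$ is injective on top classes, the lemma follows. The only nontrivial ingredient is the Segre formula for $\pi_\ast(y^{k-1+j})$; everything else is a degree count and the projection formula.
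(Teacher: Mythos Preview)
Your proof is correct. The route differs from the paper's: you push both sides down to $B$ via fibre integration $\pi_\ast$, invoke the Segre pushforward formula $\pi_\ast(y^{k-1+j})=s_j(E)$, and then use that $\pi_\ast$ is an isomorphism on top cohomology. The paper instead stays entirely inside $H^\ast(\mathbb P(E))$: it writes $\omega\cdot y^m$ as the top degree part of $\omega\cdot\sum_{j\ge 0}y^j$, multiplies by $1=s(E)\cdot c(E)$, and then observes from the Leray--Hirsch relation that $c(E)\cdot\sum_{j\ge 0}y^j$ vanishes in cohomological degree $\ge 2k$, so that only the $y^{k-1}$ contribution survives. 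In effect the paper's manipulation \emph{is} a self-contained derivation of the Segre pushforward identity you quote from Fulton, whereas you take that identity as input and wrap it in the projection formula. Your argument is cleaner if one is willing to use $\pi_\ast$ as a black box; the paper's is more elementary in that it needs nothing beyond the ring description \eqref{eq:LerayHirsch} and the inverse relation $s(E)c(E)=1$.
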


\begin{proof}
Since $\omega$ has fixed degree and because of $s(E)\cdot c(E)=1$, the following identity holds for the top degree components of these cohomology classes:
\[
\omega \cdot y^m= \omega \cdot \sum_{j\geq 0}y^j=\omega \cdot s(E)\cdot c(E)\cdot \sum_{j\geq 0}y^j \ .
\]
Each nontrivial summand of the top degree part of the right hand side of this equation must contain a factor $y^j$ with $j\geq k-1$, since $\omega \cdot s(E)\cdot c(E)$ is a cohomology class of the base and the fiber has complex dimension $k-1$.
Moreover, from (\ref{eq:LerayHirsch}) it follows that $c(E)\cdot \sum_{j\geq 0}y^j$ vanishes in degrees $\geq k$.
Together, this shows that the top degree part of $\omega \cdot s(E)\cdot c(E)\cdot \sum_{j\geq 0}y^j$ equals the top degree part of $\omega \cdot s(E)\cdot y^{k-1}$, which proves the Lemma.
\end{proof}

In order to calculate the total Chern class of $\mathbb P(E)$, we first need to investigate its tangent bundle $T\mathbb P(E)$.
This bundle splits into $T\mathbb P(E)=\pi^{\ast}TB\oplus T\pi$, where $T\pi$ denotes the tangent bundle along the fibers of $\pi:\mathbb P(E)\rightarrow B$. 
The bundle $T\pi$ is a complex vector bundle over $\mathbb P(E)$, whose restriction to every fiber $F$ of $\pi$ is the tangent bundle of $F=\mathbb CP^{k-1}$.
This bundle is canonically isomorphic to  
$\Hom(\mathcal S,\pi^{\ast}E / \mathcal S)=\mathcal S^\ast \otimes \left(\pi^{\ast}E / \mathcal S\right) 
$, 
where again $\mathcal S\rightarrow \mathbb P(E)$ denotes the tautological line bundle. 
Since $\mathcal S^{\ast}\otimes \mathcal S$ is the trivial line bundle $\underline{\mathbb C}$, this yields:
\begin{align*}
T\pi\oplus \underline{\mathbb C} \cong \left(\mathcal S^\ast \otimes \left(\pi^{\ast}E / \mathcal S\right)\right)\oplus \left(\mathcal S^{\ast}\otimes \mathcal S\right) \cong \mathcal S^\ast \otimes \left(\left(\pi^{\ast}E / \mathcal S\right) \oplus \mathcal S\right) \ ,
\end{align*}
such that
\begin{equation} \label{eq:isocxbundle2}
T\pi\oplus \underline{\mathbb C}\cong \mathcal S^{\ast}\otimes \pi^{\ast}E 
\end{equation}
follows.
By the splitting principle, we may factorize the total Chern class of $E$ formally into a product $\left(1+x_1\right)\cdots\left(1+x_k\right)$, 
where the $x_j$ denote the Chern roots of $E$. 
Since Chern classes are stable classes, equation (\ref{eq:isocxbundle2}) implies, see also \cite[p. 514]{borel_hirz}:

\begin{equation*} 
c(T\pi)=c(\mathcal S^{\ast}\otimes \pi^{\ast}E)=\prod_{i=1}^k\left(1+y+x_i\right) \ .
\end{equation*}
This identity together with the splitting $T\mathbb P(E)=\pi^{\ast}TB\oplus T\pi$ and the Whitney sum formula then yields the following Lemma:

\begin{lemma} \label{lem:cPE}
Let $E\rightarrow B$ be a complex rank $k$ vector bundle over some stably almost complex base $B$.
Denote the Chern roots of $B$ (i.e. those of the tangent bundle $TB$) by $w_1,\ldots ,w_n$ and those of $E$ by $x_1,\ldots ,x_k$.
Then the total Chern class of $\mathbb P(E)$ is given by
\begin{equation} \label{cPE}
c(\mathbb P(E))=\prod_{j=1}^n(1+w_j)\cdot \prod_{i=1}^k(1+y+x_i)\ ,
\end{equation}
where $y=c_1(\mathcal S^\ast)$ denotes the first Chern class of the dual bundle of the tautological line bundle $\mathcal S\rightarrow \mathbb P(E)$.
\end{lemma}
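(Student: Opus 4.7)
The plan is to assemble the Lemma directly from the two ingredients already established in the paragraph preceding its statement, namely the splitting of the tangent bundle of $\mathbb P(E)$ and the identification of the vertical tangent bundle $T\pi$ up to a trivial summand.

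First I would apply the Whitney sum formula to the splitting $T\mathbb P(E)=\pi^{\ast}TB\oplus T\pi$ to obtain $c(\mathbb P(E))=c(\pi^{\ast}TB)\cdot c(T\pi)$. For the first factor, naturality of Chern classes gives $c(\pi^{\ast}TB)=\pi^{\ast}c(TB)=\prod_{j=1}^n(1+w_j)$, where by the convention fixed after \eqref{eq:LerayHirsch} the pullbacks of the Chern roots $w_j$ are again denoted $w_j$. For the second factor, the stability of Chern classes together with the isomorphism \eqref{eq:isocxbundle2} yields $c(T\pi)=c(\mathcal S^{\ast}\otimes \pi^{\ast}E)$, since adding a trivial line bundle does not affect the total Chern class.

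Next I would compute $c(\mathcal S^{\ast}\otimes \pi^{\ast}E)$ via the splitting principle applied to $E$: formally factoring $c(E)=\prod_{i=1}^k(1+x_i)$, one has $c(\pi^{\ast}E)=\prod_{i=1}^k(1+x_i)$, and since tensoring with the line bundle $\mathcal S^{\ast}$ shifts each Chern root $x_i$ by $c_1(\mathcal S^{\ast})=y$, the standard formula for the Chern class of a tensor product of a line bundle with a vector bundle gives $c(\mathcal S^{\ast}\otimes \pi^{\ast}E)=\prod_{i=1}^k(1+y+x_i)$. Combining the two factors produces the claimed expression \eqref{cPE}.

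There is essentially no obstacle here, since the two nontrivial inputs (the splitting of $T\mathbb P(E)$ and the isomorphism \eqref{eq:isocxbundle2}) have already been verified; the only point deserving a brief sanity check is the legitimacy of dropping the trivial summand $\underline{\mathbb C}$, which is immediate because $c(\underline{\mathbb C})=1$. Thus the proof reduces to a short chain of standard Whitney-sum and tensor-product computations.
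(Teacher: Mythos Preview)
Your proposal is correct and follows exactly the same route as the paper: Whitney sum applied to the splitting $T\mathbb P(E)=\pi^{\ast}TB\oplus T\pi$, stability of Chern classes to pass from $T\pi$ to $\mathcal S^{\ast}\otimes \pi^{\ast}E$ via \eqref{eq:isocxbundle2}, and the splitting principle for the tensor with a line bundle. The paper presents precisely this chain of computations in the paragraph preceding the Lemma and then records the outcome as \eqref{cPE}.
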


\subsection{The Thom-Milnor number of projectivizations} \label{subsec:milnornr}

In this subsection we explain how to calculate the Thom-Milnor number of projectivizations and use this to calculate this number for some examples.

\begin{lemma} \label{lem:milnornr}
Let $E$ be a complex rank $k$ vector bundle, $k\geq 2$, with Chern roots $x_1,\ldots ,x_k$ over a stably almost complex manifold $B$ in real dimension $2n$.
Then the projectivization $\mathbb P(E)$ has real dimension $2m:=2(n+k-1)$ and its Thom-Milnor number $s_m(\mathbb P(E))$ is given by:
\begin{equation*} 
(-1)^n\cdot \sum_{r_1+\cdots +r_k=n} 
\left((-1)^{r_1}\binom{m-1}{r_1}+\cdots +(-1)^{r_k}\binom{m-1}{r_k}\right)\int_{B}x_1^{r_1}\cdots x_k^{r_k} \ ,
\end{equation*}
where the sum ranges over all partitions $r_1,\ldots ,r_k$ of $n$.
Moreover, we have the following: $s_m(\mathbb P(E))=(-1)^ns_m(\mathbb P(E^\ast))$.
\end{lemma}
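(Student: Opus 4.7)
The plan is to list the Chern roots of $\mathbb{P}(E)$, discard the pieces that vanish for degree reasons, and then repackage the remaining sum using Lemma \ref{lem:alpha} together with a standard binomial telescoping identity.

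By Lemma \ref{lem:cPE} the Chern roots of $\mathbb{P}(E)$ are $w_1,\dots,w_n$ (pulled back from $B$) together with $y+x_1,\dots,y+x_k$. Since $k\ge 2$ forces $m=n+k-1>n$, each class $w_j^{m}$ already vanishes in $H^{2m}(B)$, so only the second family contributes, and expanding binomially one gets
\[
s_m(\mathbb{P}(E))=\sum_{i=1}^{k}\sum_{r=0}^{m}\binom{m}{r}\int_{\mathbb{P}(E)} x_i^{r}\cdot y^{m-r}.
\]
Lemma \ref{lem:alpha}, applied with $\omega=x_i^{r}$ (a class of fixed degree $2r$), rewrites each integrand as the top-degree part of $x_i^{r}\cdot s(E)\cdot y^{k-1}$; using that $y^{k-1}$ restricts to the positive generator of each fiber, the integral collapses to $\int_{B} x_i^{r}\cdot s_{n-r}(E)$, so only the terms with $r\le n$ survive. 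Expanding the Segre class $s(E)=\prod_j(1+x_j)^{-1}$ gives
\[
s_{n-r}(E)=(-1)^{n-r}\sum_{l_1+\cdots+l_k=n-r} x_1^{l_1}\cdots x_k^{l_k},
\]
and the change of variables $r_j=l_j$ for $j\ne i$, $r_i=l_i+r$, turns $x_i^{r}\cdot s_{n-r}(E)$ into a sum of monomials $x_1^{r_1}\cdots x_k^{r_k}$ with $\sum_j r_j=n$ and $r_i\ge r$.

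Interchanging the summation orders puts $s_m(\mathbb{P}(E))$ into the form
\[
(-1)^{n}\sum_{r_1+\cdots+r_k=n}\Bigl(\sum_{i=1}^{k}\sum_{r=0}^{r_i}(-1)^{r}\binom{m}{r}\Bigr)\int_{B} x_1^{r_1}\cdots x_k^{r_k},
\]
and the innermost sum collapses via the telescoping identity $\sum_{r=0}^{R}(-1)^{r}\binom{m}{r}=(-1)^{R}\binom{m-1}{R}$ (an immediate consequence of Pascal's rule) to $(-1)^{r_i}\binom{m-1}{r_i}$. This is exactly the asserted formula. For the duality claim, the Chern roots of $E^{\ast}$ are $-x_1,\dots,-x_k$, so inserting them in the displayed formula multiplies each monomial $x_1^{r_1}\cdots x_k^{r_k}$ by the global sign $(-1)^{r_1+\cdots+r_k}=(-1)^{n}$, while the coefficients $\sum_{i}(-1)^{r_i}\binom{m-1}{r_i}$ are unchanged; comparing the two formulas gives $s_m(\mathbb{P}(E))=(-1)^{n}s_m(\mathbb{P}(E^{\ast}))$.

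The only step requiring genuine care is the reindexing $l_i\mapsto r_i=l_i+r$ together with the swap of summation orders: I expect the main obstacle to be verifying that, after the swap, the condition $r\le r_i$ really is the full effect of the original restriction $l_i\ge 0$, and that no additional upper bound on $r$ is lost. Once this bookkeeping is clean the telescoping identity and the duality sign are immediate.
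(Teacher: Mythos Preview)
Your proof is correct and follows essentially the same route as the paper's: both start from the Chern roots of Lemma~\ref{lem:cPE}, drop the $w_j^{m}$ terms for degree reasons, apply Lemma~\ref{lem:alpha} together with the Segre class expansion $s(E)=\prod_j(1+x_j)^{-1}$, reindex into monomials $x_1^{r_1}\cdots x_k^{r_k}$, and finish with the identity $\sum_{r=0}^{R}(-1)^{r}\binom{m}{r}=(-1)^{R}\binom{m-1}{R}$. The only cosmetic difference is that the paper applies Lemma~\ref{lem:alpha} once to the whole sum (multiplying by $s(E)\cdot y^{k-1}$ globally) rather than term by term; your bookkeeping in the reindexing step is fine.
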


\begin{proof}
First of all it is clear that $\mathbb P(E)$ has real dimension $2(n+k-1)$.
In order to calculate the Thom-Milnor number $s_m(\mathbb P(E))$, we denote the Chern roots of $B$ by $w_1,\ldots , w_n$ and the first Chern class of the dual bundle of the tautological line bundle over $\mathbb P(E)$ by $y\in H^{2}(\mathbb P(E))$.
Then (\ref{cPE}) implies
\begin{align*}
s_m(\mathbb P(E))=\int_{\mathbb P(E)} \left(\sum_{i=1}^{n}w_i^m+\sum_{l=1}^k(y+x_l)^m\right) \ .
\end{align*}
By assumptions, we have $n<m$ and we note that every symmetric expression in the $w_i$'s and $x_l$'s of degree $>2n$ vanishes because of $dim_{\mathbb R}(B)=2n$. 
This yields:
\begin{align*}
s_m(\mathbb P(E))=\int_{\mathbb P(E)}\left(  \sum_{l=1}^k\sum_{i_l=0}^{n} \binom{m}{i_l}x_l^{i_l}y^{m-i_l} \right) \ .
\end{align*}
In the next step, we use Lemma \ref{lem:alpha} and note that the inverse cohomology class $s(E)$ of $c(E)$ is given by 
\[
\frac{1}{1+x_1}\cdots \frac{1}{1+x_k}=\sum_{j_1,\ldots ,j_k \geq 0}(-x_1)^{j_1}\cdots (-x_k)^{j_k}\ .
\]
Furthermore, we use our convention that only the top degree part is integrated, as well as the fact that $y^{k-1}$ integrated over the fibers of $\mathbb P(E)$ equals $1$:
\begin{align*}
s_m(\mathbb P(E))&=\int_{\mathbb P(E)}\left( \sum_{l=1}^k \ \sum_{i_l=0}^{n} \binom{m}{i_l}x_l^{i_l} \right)\cdot 
										\left(\sum_{j_1,\ldots ,j_k \geq 0}(-x_1)^{j_1}\cdots (-x_k)^{j_k}\right)  \cdot y^{k-1} \\
										&=\int_{B}(-1)^n\left( \sum_{l=1}^k \ \sum_{i_l=0}^{n} \binom{m}{i_l}(-1)^{i_l}x_l^{i_l} \right)\cdot 
										\left(\sum_{j_1,\ldots ,j_k \geq 0}x_1^{j_1}\cdots x_k^{j_k}\right)   \\
										&=\int_{B}(-1)^n \sum_{l=1}^k \ \sum_{i_l=0}^{n} \ \sum_{j_1,\ldots ,j_k \geq 0} \   \left(\binom{m}{i_l}(-1)^{i_l}\ \cdot 
										x_1^{j_1}\cdots x_l^{j_l+i_l}\cdots  x_k^{j_k}\right)   \ .
\end{align*}
For fixed $l$, we now collect all summands which contain the same monomial in the $x_i$'s and obtain:
\begin{align*}
s_m(\mathbb P(E))=\int_{B}(-1)^n \sum_{l=1}^k \ \sum_{r_1,\ldots ,r_k \geq 0} \  \left(\sum_{i_l=0}^{r_l}\ \binom{m}{i_l}(-1)^{i_l}\right)\ \cdot \ 
										x_1^{r_1}\cdots  x_k^{r_k}   \ .
\end{align*}
Using the elementary identity 
\[
\sum_{i_l=0}^{r_l}\ \binom{m}{i_l}(-1)^{i_l}=(-1)^{r_l}\binom{m-1}{r_l} \ ,
\] 
together with the fact that only the top degree part is integrated, this shows:
\[
s_m(\mathbb P(E))=(-1)^n\cdot \sum_{r_1+\cdots +r_k=n} \  \sum_{l=1}^k
 (-1)^{r_l}\binom{m-1}{r_l} \int_{B}x_1^{r_1}\cdots x_k^{r_k} \ ,
\]
as claimed in the Lemma.
Finally, $s_m(\mathbb P(E))=(-1)^ns_m(\mathbb P(E^\ast))$ follows directly from this formula since the Chern roots of $E^\ast$ are given by $-x_1,\ldots ,-x_k$.
\end{proof}

In the proofs of Theorem \ref{thm1} and Theorem \ref{thm3} in sections \ref{sec:mainthm} and \ref{sec:thmabc} respectively we need to construct some special basis sequences of the complex bordism ring.
These constructions will be based on the following two examples:

\begin{example} \label{ex:1}
Let $m\geq n+1$ be natural numbers and $n=i_1+i_2$ a partition of $n$.
Consider $B:=\mathbb CP^{i_1}\times\mathbb CP^{i_2}$, denote the projections of $B$ onto its factors by $\pi_1$ and $\pi_2$ and define the holomorphic vector bundle 
\[
E:= \pi_1^\ast(\mathcal O(1))\oplus\pi_2^\ast(\mathcal O(1))\oplus\underline{\mathbb C}^{m-n-1} \ ,
 \]
where $\mathcal O(1)$ denotes the dual bundle of the tautological line bundle and $\underline{\mathbb C}$ the trivial line bundle.
Then the Thom-Milnor number $s_m(\mathbb P(E))$ equals:
\begin{equation} \label{ex1:milnor}
(-1)^n\cdot 
\left((-1)^{i_1}\binom{m-1}{i_1}+(-1)^{i_2}\binom{m-1}{i_2} +m-n-1\right) \ .
\end{equation}
\end{example}

\begin{proof}
By construction, the bundle $E$ has $2$ non-vanishing Chern roots $x_1$ and $x_2$, where for $l=1,2$ the root $x_l$ is the pullback $\pi_l^\ast c_1(\mathcal O(1))$ of a positive generator of $H^\ast(\mathbb CP^{i_l})$.
Therefore, because of dimensions, in the formula of Lemma \ref{lem:milnornr} only the summand with $r_1=i_1$, $r_2=i_2$ and $r_j=0$ for $j>3$ survives.
Finally, the Künneth formula yields $\int_Bx_1^{i_1}x_2^{i_2}=1$, which proves (\ref{ex1:milnor}).
\end{proof}

\begin{example} \label{ex:2}
Let $m\geq n+2$ be natural numbers and $n=i_1+i_2+i_3$ a partition of $n$.
Consider $B:=\mathbb CP^{i_1}\times\mathbb CP^{i_2}\times\mathbb CP^{i_3} $, denote the projections of $B$ onto its factors by $\pi_1$, $\pi_2$ and $\pi_3$ and define the holomorphic vector bundle 
\[
E:= \pi_1^\ast(\mathcal O(1))\oplus\pi_2^\ast(\mathcal O(1))\oplus\pi_3^\ast(\mathcal O(1))\oplus\underline{\mathbb C}^{m-n-2} \ ,
 \]
where $\mathcal O(1)$ denotes the dual bundle of the tautological line bundle over the respective complex projective space.
Then the Thom-Milnor number $s_m(\mathbb P(E))$ equals:
\begin{equation} \label{ex2:milnor}
(-1)^n\cdot 
\left(\left(\sum_{l=1}^3(-1)^{i_l}\binom{m-1}{i_l}\right) +m-n-2\right) \ .
\end{equation}
\end{example}

\begin{proof}
This calculation is completely analogous to the proof of Example \ref{ex:1}. 
\end{proof}

\section{A new complex elliptic genus} \label{sec:dualellgen}
In this section we define the $\psi$-genus, a new complex elliptic genus which in section \ref{sec:mainthm} will be shown to be the universal dualization invariant genus on the complex bordism ring where $2$ is inverted.
 
A genus is a ring homomorphism from some bordism ring (possibly with coefficients in a ring $S$) to another ring $R$.
However, in this section we only consider genera on $\Omega^U_\ast\otimes \mathbb Q$ or $\Omega^{SO}_\ast\otimes \mathbb Q$ with values in some integral $\mathbb Q$-algebra $R$.
Here $\Omega^{SO}_{\ast}$ denotes the oriented bordism ring, i.e. the bordism ring of closed oriented manifolds modulo boundaries of compact oriented ones.
The sum respectively product in $\Omega^{SO}_{\ast}$ is induced by the disjoint union respectively the Cartesian product of manifolds and the grading is given by the real dimension of manifolds.
According to R. Thom, after tensoring with the rationals, this ring is a polynomial ring with one generator in each degree $0$ mod $4$ and a particular choice of generators is given by the complex projective spaces in even complex dimensions, see \cite{stong}:
\[
\Omega_\ast^{SO}\otimes \mathbb Q=\mathbb Q[\mathbb CP^2, \mathbb CP^4, \ldots] \ .
\]
Similarly, we saw in (\ref{rationalbasis}) that $\Omega_\ast^U\otimes \mathbb Q$ equals the polynomial ring $\mathbb Q[\mathbb CP^1, \mathbb CP^2,\ldots]$.
The forgetful map $\Omega_\ast^U\otimes \mathbb Q\rightarrow \Omega_\ast^{SO}\otimes \mathbb Q$ is nothing but the natural quotient map of these polynomial rings and we may think of the rational oriented bordism ring as a quotient of the rational complex one.

Because of (\ref{rationalbasis}), a genus $\varphi:\Omega^U_\ast\otimes \mathbb Q \rightarrow R$ is uniquely determined by its logarithm
\begin{equation} \label{eq:logarithm}
g_\varphi(y):=\sum_{m=0}^\infty \frac{\varphi(\mathbb CP^m)}{m+1}\cdot y^{m+1} \ ,
\end{equation}
and by Hirzebruch's correspondence between genera and power series, the map $\varphi \mapsto g_\varphi$ induces (for any integral $\mathbb Q$-algebra $R$) a bijection between $R$-valued genera and power series $y+O(y^2)\in R[[y]]$, see \cite{HirzManiModForms}.
The genus $\varphi$ is well-defined on the quotient $\Omega^{SO}_\ast\otimes \mathbb Q$ if and only if its logarithm $g_\varphi$ is an odd power series.
One of the most famous genera on $\Omega^{SO}_\ast\otimes \mathbb Q$ is Ochanine's elliptic genus $\varphi_{Oc}$ with values in the polynomial ring $\mathbb Q[\delta,\epsilon]$, whose logarithm is defined to be the elliptic integral
\begin{equation} \label{eq:ochgen}
g_{\varphi_{Oc}}(y)=\int_0^y\frac{dt}{\sqrt{1-2\delta t^2+\epsilon t^4}} \ ,
\end{equation}
where $\delta$ and $\epsilon$ are formal variables of degrees $4$ and $8$, see \cite{ochanine}.
Since $g_{\varphi_{Oc}}$ is an odd power series, this genus indeed is well-defined for oriented manifolds.
We now give a natural generalization of the definition of $\varphi_{Oc}$ to a genus on the complex bordism ring whose logarithm need not to be an odd power series any more.

\begin{definition} \label{def:phiell}
Consider formal variables $q_1,q_2,q_3$ and $q_4$ of weights $|q_i|=2i$. 
We define the $\psi$-genus to be the genus $\psi : \Omega^U_{\ast}\otimes \mathbb Q \rightarrow \mathbb Q[q_1,q_2,q_3,q_4]$ whose logarithm equals
\begin{equation} \label{eq:logpsi}
g_\psi(y)=\int_0^y\frac{dt}{\sqrt{1+q_1t+q_2t^2+q_3t^3+q_4t^4}} \ .
\end{equation}
\end{definition}

We will also consider the above genus associated to elements $q_1,q_2,q_3,q_4$ of an integral $\mathbb Q$-algebra $R$, which are not necessarily algebraically independent.
This genus is simply the composition of $\psi: \Omega^U_{\ast}\otimes \mathbb Q \rightarrow \mathbb Q[q_1,q_2,q_3,q_4]$,
 where $q_1,q_2,q_3,q_4$ are regarded as formal variables, with the natural map to $R$. 

Let us now determine the image of $\psi$ in $\mathbb Q[q_1,q_2,q_3,q_4]$. 

\begin{lemma} \label{lem:phi=surj}
The elliptic genus $\psi$ 
is surjective and the images of $\mathbb CP^1,\mathbb CP^2,\mathbb CP^3$ and $\mathbb CP^4$ form a $\mathbb Q$-algebra basis of $\mathbb Q[q_1,q_2,q_3,q_4]$.
\end{lemma}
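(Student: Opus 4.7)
The plan is a direct computation. By (\ref{eq:logarithm}), the values $\psi(\mathbb CP^m)$ are recovered from the Taylor expansion of $g_\psi$, so I would first apply the generalized binomial theorem to expand $(1+q_1t+q_2t^2+q_3t^3+q_4t^4)^{-1/2}$ as a formal power series in $t$, integrate term by term, and read off
$$\psi(\mathbb CP^m) \;=\; (m+1)\cdot\bigl[\,\text{coefficient of } t^m \text{ in } (1+q_1t+q_2t^2+q_3t^3+q_4t^4)^{-1/2}\,\bigr] \in \mathbb Q[q_1,q_2,q_3,q_4].$$

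The decisive feature is a triangularity that follows from weighted-degree bookkeeping. Since $\psi$ respects the grading with $|q_i|=2i$, the polynomial $\psi(\mathbb CP^m)$ is weighted-homogeneous of degree $2m$ and therefore involves only $q_1,\ldots,q_m$. Moreover, for $m\leq 4$ the variable $q_m$ can appear in $\psi(\mathbb CP^m)$ only linearly, since for $m\geq 1$ every nontrivial product of $q_m$ with another $q_i$ has weighted degree $>2m$. In the binomial expansion this linear contribution comes solely from the term $-\tfrac{1}{2}q_m t^m$, and a brief calculation then shows that the coefficient of $q_m$ in $\psi(\mathbb CP^m)$ equals $-(m+1)/2\neq 0$, while all remaining monomials involve only $q_1,\ldots,q_{m-1}$.

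With this triangular structure the Lemma follows at once. By induction on $m$, starting from $q_1 = -\psi(\mathbb CP^1)$, each of $q_2, q_3, q_4$ can be expressed as a polynomial in $\psi(\mathbb CP^1),\ldots,\psi(\mathbb CP^m)$ by inverting the leading relation. Hence $\psi(\mathbb CP^1),\ldots,\psi(\mathbb CP^4)$ generate $\mathbb Q[q_1,q_2,q_3,q_4]$ as a $\mathbb Q$-algebra, which already gives the surjectivity of $\psi$. Since the target has transcendence degree $4$ over $\mathbb Q$, any four generating elements must be algebraically independent, and so $\psi(\mathbb CP^1),\ldots,\psi(\mathbb CP^4)$ form the asserted $\mathbb Q$-algebra basis. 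The only work in the argument is the mechanical binomial expansion, and no conceptual obstacle arises.
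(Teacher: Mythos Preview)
Your approach is essentially the same as the paper's: both compute enough of the Taylor expansion of $(1+q_1t+q_2t^2+q_3t^3+q_4t^4)^{-1/2}$ to see that the transition from $(q_1,\ldots,q_4)$ to $(\psi(\mathbb{CP}^1),\ldots,\psi(\mathbb{CP}^4))$ is triangular with nonzero diagonal. The paper simply writes out the four values explicitly (each has $-\tfrac{1}{2}q_m$ as its $q_m$-term), while you isolate only the diagonal coefficient; either way the conclusion is immediate.

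There is, however, a computational slip. From (\ref{eq:logarithm}) one has $g_\psi'(y)=\sum_{m\ge 0}\psi(\mathbb{CP}^m)\,y^m$, and since $g_\psi'(y)=(1+q_1y+\cdots+q_4y^4)^{-1/2}$, the correct identification is
\[
\psi(\mathbb{CP}^m)\;=\;\bigl[\text{coefficient of }t^m\text{ in }(1+q_1t+\cdots+q_4t^4)^{-1/2}\bigr],
\]
with no extra factor $(m{+}1)$. Consequently the coefficient of $q_m$ in $\psi(\mathbb{CP}^m)$ is $-\tfrac{1}{2}$, not $-(m{+}1)/2$, and one has $q_1=-2\,\psi(\mathbb{CP}^1)$ rather than $q_1=-\psi(\mathbb{CP}^1)$. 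This error is harmless for the argument (the diagonal entries remain nonzero), but you should correct it.
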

\begin{proof}
Directly from (\ref{eq:logarithm}) and (\ref{eq:logpsi}) we deduce:
\[
g^\prime_\psi (y)=\sum_{m=0}^\infty \psi(\mathbb CP^m)y^m= \left(1+q_1y+q_2y^2+q_3y^3+q_4 y^4\right)^{-\frac{1}{2}} \ .
\]
Using the Taylor expansion of $(1+x)^{-\frac{1}{2}}$ up to order $4$, a straightforward calculation yields:
\begin{align} 
\psi(\mathbb CP^1)&=-\frac{1}{2}q_1 \ , \label{psi(CP^1)} \\ 
\psi(\mathbb CP^2)&=\frac{3}{8}q_1^2-\frac{1}{2}q_2 \ , \label{psi(CP^2)} \\ 
\psi(\mathbb CP^3)&=-\frac{5}{16}q_1^3+\frac{3}{4}q_1q_2-\frac{1}{2}q_3 \ , \label{psi(CP^3)} \\
\psi(\mathbb CP^4)&=\frac{35}{128}q_1^4-\frac{15}{16}q_1^2q_2+\frac{3}{8}q_2^2+\frac{3}{4}q_1q_3-\frac{1}{2}q_4 \ . \label{psi(CP^4)}
\end{align}
The Lemma now follows immediately.
\end{proof}

Let us consider an arbitrary genus $\varphi:\Omega_\ast^U\otimes \mathbb Q \rightarrow R$ with logarithm $g_\varphi$.
In order to calculate the value of $\varphi$ on a stably almost complex manifold, one needs to determine the Hirzebruch characteristic power series $Q_\varphi(x)$ of $\varphi$, see \cite{HirzManiModForms}.
This power series equals $x/f_\varphi(x)$, where $f_\varphi(x)$ is the formal inverse function of the logarithm $g_\varphi(x)$.
Once we have determined $Q_\varphi(x)$, we can compute the value of $\varphi$ on a stably almost complex manifold $M$ with Chern roots $w_1,\ldots ,w_m$ via 
\begin{equation} \label{phi=int}
\varphi(M)= \int_M Q_\varphi(w_1)\cdot \ldots \cdot Q_\varphi(w_m) \ .
\end{equation}
As the top degree part of the above integrand is symmetric in the Chern roots, the right hand side in (\ref{phi=int}) is a certain $R$-linear combination of Chern numbers which only depends on $Q_\varphi$ and $\dim(M)$.
This shows that the right hand side in (\ref{phi=int}) is well-defined.
Let us now compute the characteristic power series $Q_\psi$ of $\psi$.

\begin{lemma} \label{lem:h=solofdgl}
The characteristic power series of the $\psi$-genus is given by $Q_\psi(x)=x\cdot h_\psi(x)$, where $h_\psi(x)=1/x+O(1)$ is uniquely determined via
\begin{equation} \label{eq:DGLforh}
h_\psi^{\prime}(x)^2=P(h_\psi(x))\ \ \text{where}\ \ P(t)=t^4+q_1t^3+q_2t^2+q_3t+q_4 \ .
\end{equation}
\end{lemma}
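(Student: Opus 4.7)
The plan is to start from Hirzebruch's formula $Q_\varphi(x)=x/f_\varphi(x)$, where $f_\psi$ is the formal inverse of the logarithm $g_\psi$, and translate the defining integral equation for $g_\psi$ into an ODE for $f_\psi$, then for $h_\psi:=1/f_\psi$.

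First I would note that since $g_\psi(y)=y+O(y^2)$, the formal inverse $f_\psi(x)=x+O(x^2)$ is a well-defined element of $\mathbb Q[q_1,q_2,q_3,q_4][[x]]$, and consequently $h_\psi(x):=1/f_\psi(x)=1/x+O(1)$ is a well-defined formal Laurent series. By definition of $Q_\psi$, this gives $Q_\psi(x)=x\cdot h_\psi(x)$ and reduces the statement to verifying the ODE and the uniqueness claim.

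For the ODE, I would differentiate the identity $g_\psi(f_\psi(x))=x$ to obtain $g_\psi'(f_\psi(x))\cdot f_\psi'(x)=1$. Since (3.5) in the paper (the derivative of $g_\psi$) gives
\[
g_\psi'(y)=\bigl(1+q_1y+q_2y^2+q_3y^3+q_4y^4\bigr)^{-1/2},
\]
we deduce
\[
f_\psi'(x)^2=1+q_1 f_\psi(x)+q_2 f_\psi(x)^2+q_3 f_\psi(x)^3+q_4 f_\psi(x)^4.
\]
Substituting $f_\psi=1/h_\psi$ and $f_\psi'=-h_\psi'/h_\psi^2$ and dividing both sides by $f_\psi^4=1/h_\psi^4$ yields exactly
\[
h_\psi'(x)^2=h_\psi^4+q_1h_\psi^3+q_2h_\psi^2+q_3h_\psi+q_4=P(h_\psi(x)),
\]
which is (\ref{eq:DGLforh}).

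Finally, for uniqueness, I would argue that a Laurent series $h(x)=x^{-1}+\sum_{k\geq 0}a_k x^k$ satisfying $h'^2=P(h)$ is determined coefficient by coefficient. Indeed, $h^4$ has leading term $x^{-4}$, so $P(h)$ has leading term $x^{-4}$ with coefficient $1$; taking the unique square root of the form $-x^{-2}+O(x^{-1})$ forces the sign branch (this branch is the correct one since $h'=-f_\psi'/f_\psi^2=-x^{-2}+O(x^{-1})$). Once the branch is fixed, comparing the coefficient of $x^{k-2}$ on both sides of $h'^2=P(h)$ for each $k\geq -1$ gives a recursion that uniquely determines $a_k$ from $a_0,\dots,a_{k-1}$, since the leading contribution to that coefficient on the left hand side is $-2a_k$ up to previously-determined terms. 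The main (and only mildly delicate) point is this branch choice in the square root; once handled, the recursion argument is routine. This proves that $h_\psi$ is the unique Laurent series of the prescribed form solving (\ref{eq:DGLforh}).
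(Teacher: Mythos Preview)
Your proof is correct and follows essentially the same route as the paper: derive the ODE for $f_\psi$ from $g_\psi(f_\psi(x))=x$ and the explicit form of $g_\psi'$, then substitute $h_\psi=1/f_\psi$ to obtain $h_\psi'^2=P(h_\psi)$, and finish with an inductive uniqueness argument. The paper's write-up carries out the same chain of equalities slightly more directly (computing $h_\psi'^2$ without first stating the ODE for $f_\psi$) and is terser on uniqueness, simply asserting that the coefficients are determined inductively; your added remark on the branch choice for the square root is a reasonable elaboration of that step.
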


\begin{proof}
By definition, we have $h_\psi(x)=1/f_\psi(x)=1/x+ O(1)$, where $f_\psi$ is the inverse function of $g_\psi$ defined in (\ref{eq:logpsi}).
This implies $g_\psi^\prime (f_\psi(x))=1/f_\psi^\prime(x)$.
Using these identities, (\ref{eq:DGLforh}) follows from: 
\begin{align*}
h_\psi^\prime(x)^2&=\left(\frac{-f^\prime_\psi(x)}{f_\psi(x)^2}\right)^2 \\
								&=\frac{h_\psi(x)^4}{g_\psi^\prime \left(f_\psi(x)\right)^2} \\
								&=h_\psi(x)^4\cdot \left(1+q_1f_\psi(x)+q_2f_\psi(x)^2+q_3f_\psi(x)^3+q_4f_\psi(x)^4\right) \\
								&=h_\psi(x)^4+q_1h_\psi(x)^3+q_2h_\psi(x)^2+q_3h_\psi(x)+q_4 \ .
\end{align*}
Since we require $h_\psi(x)=1/x+O(1)$, it follows inductively that this differential equation determines the coefficients of $h_\psi$ uniquely.
\end{proof}

If the $q_i$'s are complex numbers such that the polynomial $P$ in (\ref{eq:DGLforh}) has four distinct roots, then the solution $h_\psi$ of (\ref{eq:DGLforh}) is an elliptic function of degree two with respect to some lattice $L\subseteq \mathbb C$ \cite[pp. 452-455]{WhWa} and it will turn out in the proof of Theorem \ref{thm1} that this is crucial for the geometric behavior of $\psi$.
In \cite[p. 194]{HirzManiModForms} the following explicit description of this function, dedicated to R. Jung \cite{Jung89}, is given:

\begin{lemma} \label{sol=ellfun}
Let $q_1,q_2,q_3$ and $q_4$ be complex numbers such that the polynomial $P$ in (\ref{eq:DGLforh}) satisfies $discr(P)\neq0$ and write $P(t-q_1/4)=t^4+\tilde q_2t^2+\tilde q_3t+\tilde q_4$. 
Then there exists a lattice $L\subseteq \mathbb C$ with lattice constants $g_2(L)=\tilde q_4+\tilde q_2^2/12$ and $g_3(L)=\tilde q_4\tilde q_2/6-\tilde q_3^2/16 -\tilde q_2^3/216$ and a point $z\in \mathbb C\setminus L$ with $\wp(z)=-\tilde q_2/6$ and $\wp^{\prime}(z)=\tilde q_3/4$, where $\wp(x)=\wp(L;x)$ is the Weierstra\ss  $\ \wp$-function for the lattice $L$.
Moreover 
\begin{equation} \label{eq:h=ellfun}
h(x)=-\frac{1}{2}\cdot \frac{\wp^{\prime}(x)+\wp^{\prime}(z)}{\wp(x)-\wp(z)}-\frac{q_1}{4} 
\end{equation}
is the unique solution of $h^\prime(x)^2=P(h(x))$ with $h(x)=1/x+O(1)$.
\end{lemma}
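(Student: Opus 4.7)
The plan is first to reduce to the depressed-quartic case via the substitution $\tilde h(x) := h(x) + q_1/4$, under which the ODE $h'(x)^2 = P(h(x))$ becomes $\tilde h'(x)^2 = \tilde P(\tilde h(x))$ with $\tilde P(t) = t^4 + \tilde q_2 t^2 + \tilde q_3 t + \tilde q_4$. Uniqueness of such an $\tilde h$ with $\tilde h(x) = 1/x + O(1)$ is immediate from Lemma~\ref{lem:h=solofdgl}, so it remains to produce $L$ and $z$ as in the statement and to verify that the explicit $\tilde h(x) = -\tfrac{1}{2}(\wp'(x)+\wp'(z))/(\wp(x)-\wp(z))$ satisfies the depressed ODE.

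For the lattice I would invoke the classical fact that any $(g_2,g_3) \in \mathbb C^2$ with $g_2^3 - 27 g_3^2 \neq 0$ is realized by some lattice $L \subseteq \mathbb C$. A direct expansion of the defining expressions for $g_2$ and $g_3$ in the depressed coefficients $\tilde q_2, \tilde q_3, \tilde q_4$ yields the polynomial identity $256\cdot (g_2^3 - 27 g_3^2) = \discr(\tilde P) = \discr(P)$, so the hypothesis $\discr(P)\neq 0$ guarantees the needed nonvanishing. For the point $z$, since $\wp:\mathbb C/L \to \mathbb CP^1$ is a surjective double cover we may choose $z_0 \in \mathbb C \setminus L$ with $\wp(z_0) = -\tilde q_2/6$. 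Substituting into the Weierstrass cubic relation $(\wp')^2 = 4\wp^3 - g_2 \wp - g_3$ and using the formulas for $g_2, g_3$ yields $\wp'(z_0)^2 = \tilde q_3^2/16$; replacing $z_0$ by $-z_0$ if necessary (which flips the sign of $\wp'$ while preserving $\wp$) produces a $z$ with $\wp'(z) = \tilde q_3/4$ as required.

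The core of the proof is the verification of the ODE. Setting $u = \wp(x)$, $v = \wp(z)$, $u' = \wp'(x)$, $v' = \wp'(z)$ and differentiating $\tilde h = -\tfrac{1}{2}(u'+v')/(u-v)$ using $\wp''(x) = 6u^2 - g_2/2$ together with $u'^2 = 4u^3 - g_2 u - g_3$, one obtains the closed-form expression
\begin{equation*}
\tilde h'(x) = -\frac{1}{2}\cdot \frac{2u^3 - 6u^2 v + \tfrac{1}{2}g_2(u+v) + g_3 - u'v'}{(u-v)^2}.
\end{equation*}
Substituting this and the expression for $\tilde h$ itself into $\tilde h'^2 - \tilde P(\tilde h)$ and clearing denominators, the identity $\tilde h'^2 = \tilde P(\tilde h)$ reduces to a polynomial identity in $u, u', v, v'$ subject to the two cubic relations $u'^2 = 4u^3 - g_2 u - g_3$ and $v'^2 = 4v^3 - g_2 v - g_3$, together with the substitutions $v = -\tilde q_2/6$, $v' = \tilde q_3/4$. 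After plugging in the defining formulas for $g_2$ and $g_3$, this becomes a pure polynomial identity in $u$ and $u'$ that can be checked mechanically.

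This last verification is the main obstacle: it is elementary but computationally heavy. A cleaner alternative, should the direct expansion become unwieldy, is to match principal parts. The function $\tilde h'(x)^2 - \tilde P(\tilde h(x))$ is elliptic with possible poles only at $L \cup (z+L)$, where $\tilde h$ has a simple pole of residue $+1$ at $L$ and $-1$ at $z+L$. Computing the Laurent expansion $\tilde h(x) = 1/x + \wp(z) x - \tfrac{1}{2}\wp'(z) x^2 + O(x^3)$ at $x=0$, and the analogous expansion at $x=z$, one checks that both $\tilde h'^2$ and $\tilde P(\tilde h)$ have identical principal parts at each pole. The difference is then holomorphic and elliptic, hence constant, and the constant term of the Laurent series at $0$ pins the value down to zero.
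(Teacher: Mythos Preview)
The paper does not give its own proof of this lemma: it is stated with attribution to \cite[p.~194]{HirzManiModForms} and \cite{Jung89}, and the text moves on immediately to the $\sigma$-function description (Lemma~\ref{lem:h=sigmaprod}). So there is no paper-proof to compare your approach against.

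That said, your proposal is a correct self-contained argument. The reduction to the depressed quartic via $\tilde h=h+q_1/4$ is the right first move; the existence of $L$ follows from the uniformization of elliptic curves together with the polynomial identity $256(g_2^3-27g_3^2)=\discr(\tilde P)=\discr(P)$, which is exactly the relation between a quartic and its resolvent cubic; and your construction of $z$ is clean (the check $\wp'(z_0)^2=\tilde q_3^2/16$ is a two-line computation from the Weierstrass relation, as you indicate). For the verification of the ODE, both of your routes work. The principal-parts argument is the more conceptual one: since $\tilde h$ has simple poles of residues $+1$ at $L$ and $-1$ at $z+L$ (this persists even when $\wp'(z)=0$, as the double zero of $\wp(x)-\wp(z)$ is then cancelled by the simple zero of $\wp'(x)+\wp'(z)$), the difference $\tilde h'^2-\tilde P(\tilde h)$ can have poles only there, and your Laurent expansion $\tilde h(x)=1/x+\wp(z)x-\tfrac12\wp'(z)x^2+O(x^3)$ at $0$, together with the specific values $\wp(z)=-\tilde q_2/6$ and $\wp'(z)=\tilde q_3/4$, lets you match principal parts and the constant term. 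Uniqueness is already handled by Lemma~\ref{lem:h=solofdgl}, as you note.
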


We would now like to give a description of the elliptic function $h(x)$ in (\ref{eq:h=ellfun}) in terms of the Weierstra\ss $\ $sigma function $\sigma(L,x)$ for the period lattice $L$ of $h$.
This is an entire function on $\mathbb C$ with zeros of order $1$ at all lattice points, defined via (see \cite[Appendix I]{HirzManiModForms})
\[
\sigma(L,x)=x\prod_{\omega\in L^{\prime}}(1-\frac{x}{\omega})e^{x/\omega+\frac{1}{2}(x/\omega)^2} \ ,
\]
where we set $L^{\prime}:=L\setminus \left\{0\right\}$. 
As with $\omega$, also $-\omega$ runs through all points of $L^{\prime}$, $\sigma$ is an odd function.
The $\sigma$-function is not elliptic, but for every collection of points $a_1,\ldots ,a_n$ and $b_1,\ldots ,b_n$ in $\mathbb C$ with $\sum_i a_i=\sum_i b_i$, the function $\prod_i\frac{\sigma(x-a_i)}{\sigma(x-b_i)}$ is an elliptic function on $\mathbb C/L$ with divisor $\sum_i[a_i]-\sum_i[b_i]$. 

\begin{lemma} \label{lem:h=sigmaprod}
With the notation of Lemma \ref{sol=ellfun}, 
the function $h(x)$ in (\ref{eq:h=ellfun}) is given by:
\begin{equation} \label{eq:h=sigmaprod}
h(x)=\frac{\sigma(x-w)\sigma(x+w-z)\sigma(-z)}{\sigma(x)\sigma(x-z)\sigma(w-z)\sigma(-w)} \ ,
\end{equation}
where $\sigma(x)$ denotes the Weierstra\ss $\ $ $\sigma$-function with respect to the lattice $L$ and $w\in \mathbb C \setminus L$ is a point with $2\cdot \frac{\wp^{\prime}(w)+\wp^{\prime}(z)}{\wp(w)-\wp(z)}=-q_1$.
\end{lemma}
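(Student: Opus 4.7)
The plan is to verify that both sides of (\ref{eq:h=sigmaprod}) are elliptic functions for the lattice $L$ with the same divisor and the same leading coefficient at $x=0$; uniqueness of elliptic functions with prescribed divisor then forces equality.

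First, I would determine $\Div(h)$. Since $h$ is the order-two elliptic function given by (\ref{eq:h=ellfun}), its poles come from the zeros of $\wp(x)-\wp(z)$, which lie at $x\equiv \pm z \pmod L$. At $x\equiv -z$ the numerator $\wp'(x)+\wp'(z)$ vanishes, because $\wp'$ is odd, so this potential pole cancels. At $x\equiv z$ the numerator equals $2\wp'(z)\neq 0$ (guaranteed by $\discr(P)\neq 0$), giving a simple pole. Together with the simple pole at $x\equiv 0$ coming from $h(x)=1/x+O(1)$, these exhaust the poles of the order-two function $h$. The definition of $w$ in the statement is exactly $h(w)=0$; since an elliptic function has $\sum(\text{zeros})\equiv\sum(\text{poles})\pmod L$, the second zero must lie at $x\equiv z-w$. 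Thus $\Div(h)=[w]+[z-w]-[0]-[z]$.

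Second, I would verify that the right-hand side of (\ref{eq:h=sigmaprod}) has the same divisor and is itself elliptic. Using the property recalled just before the statement, a product $\prod_i \sigma(x-a_i)/\sigma(x-b_i)$ is elliptic on $\mathbb{C}/L$ provided $\sum a_i=\sum b_i$; here the $a_i$ are $w, z-w$ and the $b_i$ are $0, z$, so both sums equal $z$ and ellipticity holds. Since $\sigma$ has simple zeros precisely on $L$, the numerator factors contribute simple zeros at $x\equiv w$ and $x\equiv z-w$, while the denominator factors $\sigma(x)$ and $\sigma(x-z)$ contribute simple poles at $x\equiv 0$ and $x\equiv z$; the remaining $\sigma$-factors are nonzero constants. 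This matches $\Div(h)$ exactly.

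Third, because two elliptic functions with the same divisor differ only by a multiplicative constant, it remains to compare leading behavior at $x=0$. Using $\sigma(x)=x+O(x^3)$, the right-hand side near $x=0$ equals
\[
\frac{\sigma(-w)\,\sigma(w-z)\,\sigma(-z)}{x\cdot \sigma(-z)\,\sigma(w-z)\,\sigma(-w)}+O(1)=\frac{1}{x}+O(1),
\]
which agrees with $h(x)=1/x+O(1)$, so the constant equals $1$. The only thing requiring a brief check is that the $\sigma$-values in the denominator are nonzero, i.e.\ that $w\not\equiv 0, z\pmod L$; but this is automatic since $w$ is a zero and $0, z$ are poles of $h$. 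Thus there is no real obstacle beyond careful bookkeeping of the divisors and the $\sigma$-normalization.
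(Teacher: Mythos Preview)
Your approach is essentially the paper's: determine $\Div(h)$, check that the $\sigma$-quotient is elliptic with the same divisor, and match the leading term at $x=0$.

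There is one inaccuracy. The claim that $\discr(P)\neq 0$ guarantees $\wp'(z)\neq 0$ is false: for instance $P(t)=(t^2-1)(t^2-4)$ has four distinct roots but $\tilde q_3=0$, hence $\wp'(z)=\tilde q_3/4=0$. When $\wp'(z)=0$, the point $z$ is a half-period, so $z\equiv -z\pmod L$ and your separate treatment of the poles at $z$ and $-z$ collapses. In that case $\wp(x)-\wp(z)$ has a \emph{double} zero at $z$, while the numerator $\wp'(x)+\wp'(z)=\wp'(x)$ has only a simple zero there, so $h$ again acquires a simple pole at $z$. The paper handles this degeneration explicitly via a brief case distinction; once you insert it, the rest of your argument goes through unchanged.
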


\begin{proof} 
Let us first examine the divisor of the function $h$ in (\ref{eq:h=ellfun}):
The Weierstra\ss  $\ \wp$-function for the lattice $L$ is an even elliptic function of degree $2$ and modulo $L$ it has exactly one pole at the origin of order $2$.
Therefore, its derivative is an odd elliptic function of degree $3$ which modulo $L$ has exactly one pole at the origin of order $3$.
It follows that $h$ has a pole of order one at $x=0$ and possibly poles at $x=\pm z$, where $z$ is not a lattice point, since $\wp(z)=-\tilde q_2/6$ is finite.
Using the Taylor expansion of $\wp(x)$ around $z$ and $-z$ shows that $h$ always has a pole of order one in $z$ and if it also has a pole in $-z$, then $\wp^\prime(z)$ vanishes.
Since any zero $z$ of $\wp^\prime$ has the property that $2\cdot z$ is a lattice point, this already implies $z\equiv -z \pmod L$.
This shows that $h$ has poles precisely at $0$ and $z$, both of order one.
Since the sum of the poles and zeros of an elliptic function is always $0$ mod $L$, the divisor of $h$ equals
\[
\Div(h)=[w]+[z-w]-[0]-[z]\ ,
\]
where $w\in \mathbb C$ is a zero of $h$, i.e. it is a point with $2\cdot \frac{\wp^{\prime}(w)+\wp^{\prime}(z)}{\wp(w)-\wp(z)}=-q_1$.

Since $z$ is the pole different from $0$ and $w$ is a zero of $h$, the points $0$, $w$ and $z$ are pairwise distinct modulo $L$.
Therefore, $\sigma(w-z)\sigma(-w)$ does not vanish and the right hand side of (\ref{eq:h=sigmaprod}) is a well-defined elliptic function on $\mathbb C/L$, whose divisor coincides with the divisor of $h$. 
Hence, both sides of (\ref{eq:h=sigmaprod}) just differ by a multiplicative constant and because of $\sigma(x)=x+O(x^2)$, we conclude \[
\frac{\sigma(x-w)\sigma(x+w-z)\sigma(-z)}{\sigma(x)\sigma(x-z)\sigma(w-z)\sigma(-w)}=\frac{1}{x}+O(1) \ ,
\] 
which means that the constant is one, as claimed in the Lemma.
\end{proof}

\section{Dualization invariance in the complex bordism ring} \label{sec:mainthm}

In this section we turn to the main result of this paper, the determination of the following ideal:

\begin{definition} \label{def:IU}
Let $\mathcal I^{U}$ be the ideal generated by differences $\mathbb P(E)-\mathbb P(E^{\ast})$ in $\Omega^U_{\ast}$, where $E\rightarrow B$ is a complex vector bundle over a stably almost complex manifold $B$, $E^{\ast}$ is its dual bundle and $\mathbb P(E)$ respectively $\mathbb P(E^{\ast})$ denote the corresponding projectivizations. 
\end{definition}

Before we state our result, note that by Theorem \ref{thm:milnor} the bordism ring $\Omega_\ast^U\otimes \mathbb Z[1/2]$ is a subring of the rational complex bordism ring and we may restrict the $\psi$-genus (which a priori is only defined on $\Omega_\ast^U\otimes \mathbb Q$) to this subring.

\begin{theorem} \label{thm1}
The $\psi$-genus restricted to $\Omega_\ast^U\otimes \mathbb Z[1/2]$ induces the following isomorphism of graded rings:
\[
\left(\Omega_\ast^U/\mathcal I^U\right) \otimes \mathbb Z\left[1/2\right] \cong \mathbb Z\left[1/2\right][q_1,q_2,q_3,q_4] \ .
\]
\end{theorem}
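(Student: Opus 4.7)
The plan is to prove the stated isomorphism in three stages: (i) verify $\mathcal{I}^U\subseteq\ker\psi$, so that $\psi$ descends to the quotient; (ii) show the induced map is surjective over $\mathbb{Z}[1/2]$; and (iii) produce enough elements of $\mathcal{I}^U$ to see that the quotient has no extra generators beyond the classes of $\mathbb{CP}^1,\ldots,\mathbb{CP}^4$.

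The hard step is (i), the dualization invariance $\psi(\mathbb{P}(E))=\psi(\mathbb{P}(E^*))$. Using (\ref{phi=int}) with the Chern root decomposition of Lemma \ref{lem:cPE}, one has
\[
\psi(\mathbb{P}(E))=\int_B\prod_j Q_\psi(w_j)\cdot\pi_*\!\left(\prod_{i=1}^k Q_\psi(y+x_i)\right),
\]
and the analogous expression for $\mathbb{P}(E^*)$ replaces the fiber factor by $\prod_i Q_\psi(y'-x_i)$. To compare, I would specialize $q_1,\ldots,q_4$ to complex numbers with $\discr(P)\neq 0$ so that Lemma \ref{sol=ellfun} and Lemma \ref{lem:h=sigmaprod} apply and $h_\psi$ becomes an honest elliptic function presented as a $\sigma$-quotient. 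The fiber pushforward $\pi_*$ can then be interpreted as a sum of residues of $\prod_i h_\psi(y+x_i)$ at its $2k$ poles $y=-x_i$ and $y=z-x_i$ inside a fundamental domain; a direct manipulation of the $\sigma$-product in Lemma \ref{lem:h=sigmaprod} should show this sum to be symmetric under $x_i\mapsto -x_i$. Since both sides of the dualization identity are polynomials in the formal $q_j$, matching them on a Zariski-dense set of specializations suffices.

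For (ii), the triangular relations (\ref{psi(CP^1)})--(\ref{psi(CP^4)}) in Lemma \ref{lem:phi=surj} have leading coefficients that are units of $\mathbb{Z}[1/2]$, so the $q_i$ can be solved for as $\mathbb{Z}[1/2]$-polynomials in the $\psi(\mathbb{CP}^m)$ and $\psi$ is already surjective. For (iii), I would invoke Milnor's criterion (Theorem \ref{thm:milnor}) to construct a basis sequence $(x_m)_{m\geq 1}$ of $\Omega_\ast^U\otimes\mathbb{Z}[1/2]$ with $x_m=\mathbb{CP}^m$ for $1\leq m\leq 4$ and $x_m\in\mathcal{I}^U\otimes\mathbb{Z}[1/2]$ for $m\geq 5$. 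Lemma \ref{lem:milnornr} gives $s_m(\mathbb{P}(E)-\mathbb{P}(E^*))=(1-(-1)^n)\,s_m(\mathbb{P}(E))$, so picking an odd-dimensional base $B$ yields an element of $\mathcal{I}^U$ with Thom--Milnor number $2s_m(\mathbb{P}(E))$; since $2\in\mathbb{Z}[1/2]^\times$, it suffices to arrange $s_m(\mathbb{P}(E))=\pm 1$ or $\pm p$ as required. Varying the partition $n=i_1+i_2$ in Example \ref{ex:1} (and dropping to the three-term partition of Example \ref{ex:2} in the finitely many exceptional cases where (\ref{ex1:milnor}) cannot hit the target) provides enough flexibility in the binomial formulas (\ref{ex1:milnor}) and (\ref{ex2:milnor}) to produce such an $x_m$ in every degree $m\geq 5$. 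Since each $x_m$ with $m\geq 5$ lies in $\mathcal{I}^U$, the quotient $(\Omega_\ast^U/\mathcal{I}^U)\otimes\mathbb{Z}[1/2]$ is generated as a $\mathbb{Z}[1/2]$-algebra by $\mathbb{CP}^1,\ldots,\mathbb{CP}^4$, and the surjective triangular map to $\mathbb{Z}[1/2][q_1,q_2,q_3,q_4]$ is forced to be an isomorphism.

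The essential obstacle is the $\sigma$-function residue argument in stage (i); the combinatorial verification in stage (iii) is laborious but routine given the explicit formulas (\ref{ex1:milnor}) and (\ref{ex2:milnor}).
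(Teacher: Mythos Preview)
Your three-stage outline matches the paper's proof exactly: stage (i) is Proposition~\ref{prop:kerpsi}, stage (ii) is Lemma~\ref{lem:phi=surj} and the triangular system (\ref{psi(CP^1)})--(\ref{psi(CP^4)}), and stage (iii) is Proposition~\ref{prop:basisseqMU}. Two points of your sketch need sharpening, however.

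In stage (i), the pushforward $\pi_*$ is \emph{not} the sum of residues of $\prod_i h_\psi(y+x_i)$ at all $2k$ poles---that sum is zero by Liouville and would give $\psi(\mathbb P(E))=0$. Rather, $\pi_*$ equals the sum of residues at the $k$ poles $y=-x_i$ only (this is Lemma~\ref{lem:H}, proved by polynomial interpolation; equivalently, the standard residue formula for $\mathbb P^{k-1}$-bundles). Liouville's theorem then equates this sum to \emph{minus} the sum at the remaining $k$ poles $y=z-x_i$, and the functional equation $h_\psi(x+z)=h_\psi(-x)$ (immediate from the $\sigma$-product in Lemma~\ref{lem:h=sigmaprod}) converts the latter sum into $H_\psi(-x_1,\ldots,-x_k)$. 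So the crucial ingredient you need from the $\sigma$-representation is this functional equation together with $\res_0 h_\psi=1$, $\res_z h_\psi=-1$.

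In stage (iii), it is not necessary (and in general not possible) to hit $s_m(\mathbb P(E))=\pm 1$ or $\pm p$ with a single bundle; the paper instead shows that the \emph{odd} gcd of the numbers $s_m(\mathbb P(E))$ over the families in Examples~\ref{ex:1} and~\ref{ex:2} (with $n$ odd) is $1$ or $p$. Also, Example~\ref{ex:2} is not reserved for finitely many exceptions: it is the main tool for all even $m\geq 6$, where one subtracts the values (\ref{ex2:milnor}) for the partitions $(i,i,n-2i)$ and $(i-1,i+1,n-2i)$ to see that the gcd divides $\binom{m+1}{i+1}$, and then invokes a Kummer-type argument on the $p$-divisibility of $\binom{p^s r}{p^s}$ and $\binom{p^s}{p^{s-1}}$. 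For odd $m\geq 5$ a single instance of Example~\ref{ex:1} suffices. The combinatorics is indeed elementary, but the divisibility step is the point that requires care.
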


By Theorem \ref{thm:milnor}, the bordism ring $\Omega_\ast^U \otimes \mathbb Z\left[1/2\right]$ is a polynomial ring $\mathbb Z\left[1/2\right][\alpha_1,\alpha_2,\alpha_3,\ldots ]$ with one generator $\alpha_i$ in each even degree $2 i$.
Therefore, one consequence of Theorem \ref{thm1} is that one can choose these generators in such a way that 
\begin{equation} \label{eq:kerpsi}
\mathcal I^U \otimes \mathbb Z\left[1/2\right]=\left\langle \alpha_5,\alpha_6,\ldots \right\rangle
\end{equation} 
holds.
In fact we will construct such generators explicitly in Proposition \ref{prop:basisseqMU}.

In order to state an equivalent formulation of Theorem \ref{thm1}, let us call a genus $\varphi$ dualization invariant if for any complex vector bundle $E$ over some stably almost complex base
\[
\varphi(\mathbb P(E))=\varphi(\mathbb P(E^\ast))
\]
holds.
Abstractly, the universal dualization invariant genus on the complex bordism ring with coefficients in $\mathbb Z[1/2]$ is nothing but the quotient map 
\[\Omega^U_{\ast}\otimes \mathbb  Z\left[1/2\right]\rightarrow \left(\Omega^U_{\ast}/\mathcal I^U\right)\otimes \mathbb  Z\left[1/2\right] \ ,
\]
and by Theorem \ref{thm1} we can identify this quotient map with $\psi$.
Moreover, Theorem \ref{thm1} implies the nontrivial fact that $\psi$ restricted to $\Omega_\ast^U \otimes \mathbb Z\left[1/2\right]$ is a surjective genus 
\[
\psi : \Omega^U_{\ast}\otimes \mathbb Z\left[1/2\right] \rightarrow \mathbb Z\left[1/2\right][q_1,q_2,q_3,q_4] \ ,
\]
and we obtain the following equivalent reformulation of Theorem \ref{thm1}:

\begin{theorem} \label{thm1'}
The genus $\psi: \Omega^U_{\ast}\otimes \mathbb Z\left[1/2\right] \rightarrow \mathbb Z\left[1/2\right][q_1,q_2,q_3,q_4]$ 
is the universal dualization invariant genus on $\Omega^U_{\ast}\otimes \mathbb Z\left[1/2\right]$.
\end{theorem}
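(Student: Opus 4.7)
My plan is to exhibit $\psi$ as factoring through $(\Omega^U_\ast/\mathcal I^U) \otimes \mathbb Z[1/2]$, and then to prove that the induced map is an isomorphism of graded rings. A preliminary observation is that the expansion $(1+u)^{-1/2} = \sum_k (-1)^k \frac{(2k)!}{4^k (k!)^2} u^k$ underlying Lemma \ref{lem:phi=surj} shows that each $\psi(\mathbb CP^m)$ lies in $\mathbb Z[1/2][q_1,q_2,q_3,q_4]$, so that $\psi$ restricted to $\Omega^U_\ast\otimes \mathbb Z[1/2]$ is well-defined with values in the claimed target ring.

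The heart of the proof is the dualization invariance $\psi(\mathbb P(E)) = \psi(\mathbb P(E^\ast))$. By the splitting principle I may assume $E = L_1\oplus \cdots \oplus L_k$ with Chern roots $x_1,\ldots,x_k$. The $k$ tautological sections $\sigma_i:B\to \mathbb P(E)$ pull $y$ back to $-x_i$, and the Lagrange interpolation formula for the fiber push-forward
\[
\pi_*(F(y))=\sum_{i=1}^k F(-x_i)\big/\prod_{j\ne i}(x_j-x_i)
\]
applied to $F(y)=\prod_l Q_\psi(y+x_l)$ yields, using $Q_\psi(x)=x\,h_\psi(x)$ and $Q_\psi(0)=1$,
\[
\psi(\mathbb P(E)) = \int_B \prod_j Q_\psi(w_j)\cdot \sum_{i=1}^k \prod_{l\ne i} h_\psi(x_l-x_i).
\]
Passing to $E^\ast$ flips the signs of the $x_i$, so I am reduced to the formal identity
\[
\sum_i \prod_{l\ne i} h_\psi(x_l-x_i) = \sum_i \prod_{l\ne i} h_\psi(x_i-x_l).
\]
By algebraic density in the $q_i$'s, it suffices to prove this when $\discr(P)\ne 0$, so that Lemmas \ref{sol=ellfun}--\ref{lem:h=sigmaprod} apply: $h_\psi$ is then a degree-two elliptic function for a lattice $L\subset \mathbb C$, with simple poles at $0$ and $z$ of residues $+1$ and $-1$, and with the involution symmetry $h_\psi(z-u)=h_\psi(u)$, i.e.\ $h_\psi(u+z)=h_\psi(-u)$. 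Applied to the elliptic function $F(y):=\prod_{l=1}^k h_\psi(y-x_l)$ on a fundamental domain, the residue theorem produces poles at $y\equiv x_i \pmod{L}$ with residue $\prod_{l\ne i} h_\psi(x_i-x_l)$ and at $y\equiv x_i+z \pmod{L}$ with residue $-\prod_{l\ne i} h_\psi(x_l-x_i)$ (the second using $h_\psi(x_i+z-x_l)=h_\psi(x_l-x_i)$); the vanishing of the total residue sum is exactly the required identity.

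For the isomorphism I combine two ingredients. First, the triangular system in Lemma \ref{lem:phi=surj} has leading coefficients $\pm 1/2$, units of $\mathbb Z[1/2]$, so $\psi(\mathbb CP^1),\ldots,\psi(\mathbb CP^4)$ form a polynomial basis of $\mathbb Z[1/2][q_1,\ldots,q_4]$ over $\mathbb Z[1/2]$. Second, I construct a polynomial basis sequence $\alpha_1,\alpha_2,\ldots$ of $\Omega^U_\ast\otimes\mathbb Z[1/2]$ with $\alpha_m\in\mathcal I^U$ for every $m\ge 5$, taking $\alpha_i=\mathbb CP^i$ for $i=1,\ldots,4$ (which are generators over $\mathbb Z[1/2]$ by Theorem \ref{thm:milnor}). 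By Lemma \ref{lem:milnornr}, choosing in Examples \ref{ex:1}--\ref{ex:2} a base $B$ of odd complex dimension $n$ gives $s_m(\mathbb P(E)-\mathbb P(E^\ast))=2\,s_m(\mathbb P(E))$, and a finite case analysis of $m+1$ modulo primes, using the binomial sums \eqref{ex1:milnor}--\eqref{ex2:milnor}, produces in every degree $2m\ge 10$ an element of $\mathcal I^U$ whose Thom-Milnor number is a $\mathbb Z[1/2]$-unit multiple of the integer prescribed by Theorem \ref{thm:milnor}. With such a basis sequence, $(\Omega^U_\ast/\mathcal I^U)\otimes\mathbb Z[1/2]$ becomes a quotient of the polynomial subring $\mathbb Z[1/2][\alpha_1,\ldots,\alpha_4]$, and $\psi$ maps it surjectively onto $\mathbb Z[1/2][q_1,\ldots,q_4]$; as both are graded free $\mathbb Z[1/2]$-algebras on four generators in matching degrees $2,4,6,8$, the surjection must be an isomorphism.

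The main obstacle is the dualization invariance step: bridging the purely algebraic push-forward identity with the analytic residue argument on a complex torus requires a careful algebraic density argument (both sides are polynomial in $q_1,\ldots,q_4$ and in $x_1,\ldots,x_k$ after clearing denominators, and the identity is established on a Zariski-open locus). A secondary challenge is the combinatorial verification for the basis sequence $(\alpha_m)_{m\ge 5}$, which reduces to checking the arithmetic of certain binomial coefficient sums modulo primes. Once these are in place, Theorem \ref{thm1'} follows immediately from Theorem \ref{thm1} via the universal-property reformulation given in the introduction.
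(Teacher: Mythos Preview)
Your proposal is correct and follows essentially the same route as the paper: you derive the push-forward formula of Lemma~\ref{lem:H} (your Lagrange interpolation is exactly the paper's division-with-remainder argument read differently), prove $H_\psi(x_1,\dots,x_k)=H_\psi(-x_1,\dots,-x_k)$ via the residue theorem for the elliptic function $h_\psi$ using the symmetry $h_\psi(u+z)=h_\psi(-u)$, and combine this with the basis sequence of Proposition~\ref{prop:basisseqMU} built from Examples~\ref{ex:1}--\ref{ex:2}. The only cosmetic difference is that you phrase the push-forward via tautological sections rather than the explicit polynomial interpolation in $y$, and you leave the binomial-coefficient case analysis for $(\alpha_m)_{m\ge 5}$ as a sketch where the paper carries it out in full.
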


Note that tensoring the isomorphism in Theorem \ref{thm1} with the rationals shows that one can replace the coefficient ring $\mathbb Z[1/2]$ in Theorems \ref{thm1} and \ref{thm1'} by $\mathbb Q$.

In the remainder of this section, the proof of Theorem \ref{thm1} is carried out and to begin with we explain how to deduce Theorem \ref{thm1} from the following two Propositions, which we will prove in subsections  \ref{subsec:basisseq} and \ref{subsec:valueofphi} respectively.

\begin{proposition} \label{prop:basisseqMU}
There is a sequence of ring generators $\left(\alpha_m\right)_{m\geq 1}$ for $\Omega^U_{\ast}\otimes \mathbb Z[1/2]$ such that $\alpha_m$ equals $\mathbb CP^m$ for $m\leq 4$ and $\alpha_m \in \mathcal I^{U}$ for $m\geq 5$.
\end{proposition}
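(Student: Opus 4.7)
The plan is to apply Theorem \ref{thm:milnor} to reduce the question to computing Thom-Milnor numbers. After inverting $2$, a sequence $(\alpha_m)_{m \geq 1}$ with $\alpha_m \in \Omega^U_{2m} \otimes \mathbb Z[1/2]$ forms a polynomial basis if and only if, writing $s_m(\alpha_m) = 2^a d$ with $d$ odd, one has $d = \pm 1$ unless $m+1$ is a power of an odd prime $p$, in which case $d = \pm p$. For $m \leq 4$ this is immediate with $\alpha_m = \mathbb CP^m$, since $s_m(\mathbb CP^m) = m+1 \in \{2,3,4,5\}$ trivially has the required odd part.

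For $m \geq 5$ I would build $\alpha_m$ as an integer combination
\[
\alpha_m \;=\; \sum_j c_j\,\bigl(\mathbb P(E_j) - \mathbb P(E_j^{\ast})\bigr) \;\in\; \mathcal I^U,
\]
where each $E_j$ is one of the bundles constructed in Example \ref{ex:1} or Example \ref{ex:2}, chosen with \emph{odd} complex base dimension $n_j$. The last identity in Lemma \ref{lem:milnornr} then forces $s_m(\mathbb P(E_j) - \mathbb P(E_j^{\ast})) = 2\,s_m(\mathbb P(E_j))$, and the explicit formulas (\ref{ex1:milnor}), (\ref{ex2:milnor}) write each $s_m(\mathbb P(E_j))$ as a short alternating sum of binomials $\binom{m-1}{i_l}$ plus an affine term in $m$. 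The remaining task is purely arithmetic: for each $m \geq 5$, exhibit a finite collection of admissible partitions (of odd integers $\leq m-1$ for Example \ref{ex:1}, respectively $\leq m-2$ for Example \ref{ex:2}) together with integer weights $c_j$ so that the resulting combination of Thom-Milnor numbers has the prescribed odd part.

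When $m+1$ is not a prime power, or is a power of $2$, this amounts to the weak requirement that some combination have $2$-free part $\pm 1$, which a Bezout-style argument on two or three Thom-Milnor values readily achieves. The genuine obstacle is the case $m+1 = p^k$ with $p$ odd, where $p$-adic valuation exactly equal to $1$ must be hit: individual values $s_m(\mathbb P(E_j))$ can be divisible by arbitrarily high powers of $p$ or else coprime to $p$, and one must pair them so that higher-order terms cancel. My approach would be to invoke Kummer's theorem to control each $v_p\!\left(\binom{m-1}{i}\right)$, split the verification by the residue class of $m$ modulo $p^k$, and bring in Example \ref{ex:2} whenever the two-summand family of Example \ref{ex:1} lacks the required flexibility. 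Once a clean pattern in each residue class is identified, the actual arithmetic verification is mechanical; pinning down that pattern uniformly across the case distinction is where I expect the main technical work to lie.
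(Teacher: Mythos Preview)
Your overall framework matches the paper's exactly: reduce via Theorem~\ref{thm:milnor} to controlling the odd part of a greatest common divisor of Thom--Milnor numbers, use $\mathbb{CP}^m$ for $m\le 4$, and for $m\ge 5$ build $\alpha_m$ from differences $\mathbb{P}(E)-\mathbb{P}(E^\ast)$ with $E$ taken from Examples~\ref{ex:1} and~\ref{ex:2} over bases of \emph{odd} complex dimension (so that Lemma~\ref{lem:milnornr} gives $s_m(\mathbb{P}(E)-\mathbb{P}(E^\ast))=2s_m(\mathbb{P}(E))$). Up to this point there is nothing to add.

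The gap is that the arithmetic you defer is the entire content of the proposition, and the route you sketch (Kummer's theorem, a residue-class case split, ad hoc pairing to kill higher $p$-powers) is substantially harder than necessary. The paper avoids all of this with one idea you are missing: rather than estimate $v_p$ of individual values, take the \emph{difference} of two evaluations of (\ref{ex2:milnor}) with the same odd $n=m-3$, namely $(i_1,i_2,i_3)=(i,i,n-2i)$ and $(i-1,i+1,n-2i)$. The affine term $m-n-2$ and the common summand cancel, and two applications of Pascal's rule collapse what remains to $\binom{m+1}{i+1}$. Thus the odd gcd divides $\binom{m+1}{i+1}$ for all $1\le i\le m/2-2$ as well as $m+1$ itself (from $i_1=i_2=i_3=1$). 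For even $m$ this already forces any odd prime divisor $p$ to satisfy $m+1=p^s$ (via $\binom{p^s r}{p^s}\equiv r\pmod p$), and rules out $p^2$ (via $\binom{p^s}{p^{s-1}}$). For odd $m\ge 5$ the paper is even more direct: Example~\ref{ex:1} with $(i_1,i_2)=(0,m-2)$ gives the value $-m+3$, and $\gcd(m+1,m-3)\mid 4$, so the odd gcd is $1$. No Kummer, no residue classes, no $p$-adic bookkeeping.

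In short: your strategy is correct and identical to the paper's, but the proposal stops precisely where the proof begins. The missing step is the single subtraction trick that converts the raw Thom--Milnor data into the binomial coefficients $\binom{m+1}{j}$, after which the argument is two lines.
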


\begin{proposition} \label{prop:kerpsi}
The $\psi$-genus is dualization invariant, i.e. it vanishes on $\mathcal I^U$.
\end{proposition}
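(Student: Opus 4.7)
The plan is to reduce the identity $\psi(\mathbb P(E))=\psi(\mathbb P(E^*))$ to a purely analytic identity for the elliptic function $h_\psi$. By the splitting principle it suffices to treat the case $E=L_1\oplus\cdots\oplus L_k$ with $x_i:=c_1(L_i)$. Lemma \ref{lem:cPE} gives the Chern roots of $\mathbb P(E)$ (resp.\ $\mathbb P(E^*)$) as $w_1,\ldots,w_n$ together with $y+x_i$ (resp.\ $y'-x_i$). Applying the Atiyah-Bott localization formula to the diagonal $(\mathbb C^*)^k$-action on $\pi\colon\mathbb P(E)\to B$, whose fixed sections $s_i\colon B\hookrightarrow\mathbb P(E)$ satisfy $y|_{s_i}=-x_i$ and have normal bundle with Euler class $\prod_{l\neq i}(x_l-x_i)$, and using $Q_\psi(0)=1$ together with $Q_\psi(u)/u=h_\psi(u)$, I obtain
\begin{equation*}
\psi\bigl(\mathbb P(E)\bigr)=\int_B\prod_{j=1}^nQ_\psi(w_j)\cdot\sum_{i=1}^k\prod_{l\neq i}h_\psi(x_l-x_i),
\end{equation*}
and, by the same computation with $x_l$ replaced by $-x_l$,
\begin{equation*}
\psi\bigl(\mathbb P(E^*)\bigr)=\int_B\prod_{j=1}^nQ_\psi(w_j)\cdot\sum_{i=1}^k\prod_{l\neq i}h_\psi(x_i-x_l).
\end{equation*}

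The proposition therefore reduces to the formal identity
\begin{equation*}
\sum_{i=1}^k\prod_{l\neq i}h_\psi(x_l-x_i)\;=\;\sum_{i=1}^k\prod_{l\neq i}h_\psi(x_i-x_l)\qquad(\star)
\end{equation*}
in $\mathbb Q[q_1,q_2,q_3,q_4][[x_1,\ldots,x_k]]$. Since the coefficients of both sides are polynomial in the $q_i$, it is enough to verify $(\star)$ after specializing $(q_1,\ldots,q_4)$ to complex numbers with $\discr(P)\neq 0$; by Lemmas \ref{sol=ellfun} and \ref{lem:h=sigmaprod}, $h_\psi$ then becomes a genuine degree-$2$ elliptic function on some complex torus $\mathbb C/L$ with simple poles only at $0$ and $z$, given explicitly as a product of four Weierstra\ss$\,$$\sigma$-functions. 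Both sides of $(\star)$ thereby become meromorphic functions of $(x_1,\ldots,x_k)\in(\mathbb C/L)^k$.

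To verify $(\star)$ I would use residue calculus on $\mathbb C/L$. The left-hand side of $(\star)$ equals the sum of residues of the elliptic function $u\mapsto\prod_{l=1}^kh_\psi(u+x_l)$ at its primary poles $u=-x_i$, and the right-hand side is the analogous sum for $u\mapsto\prod_lh_\psi(u-x_l)$ at $u=x_i$. Applying the residue theorem on $\mathbb C/L$ to each of these elliptic functions, both sums can be re-expressed in terms of the residues at the secondary poles $u=-x_i+z$ resp.\ $u=x_i+z$, which come from the second simple pole of $h_\psi$ at $z$. The parity $\sigma(-u)=-\sigma(u)$ of the Weierstra\ss$\,$$\sigma$-function, together with the fact that $u\mapsto h_\psi(-u)$ corresponds to replacing the parameters $(w,z)$ by $(-w,-z)$ in the $\sigma$-product of Lemma \ref{lem:h=sigmaprod}, is then used to match the two secondary sums, yielding $(\star)$.

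The main obstacle is precisely the identity $(\star)$: the cases $k=1,2$ are immediate, and $(\star)$ would also be trivial if $h_\psi$ were even or odd, but $h_\psi$ is in general neither. The argument thus depends essentially on $h_\psi$ having only two simple poles per fundamental domain, i.e.\ on the degree-$2$ elliptic structure built into the defining logarithm (\ref{eq:logpsi}) of $\psi$; without this structure the combinatorial sums on the two sides of $(\star)$ would not coincide.
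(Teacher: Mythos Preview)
Your overall strategy coincides with the paper's: reduce $\psi(\mathbb P(E))=\psi(\mathbb P(E^*))$ to the formal identity $(\star)$, specialize the $q_i$ to complex numbers with $\discr(P)\neq 0$ so that $h_\psi$ becomes a degree-two elliptic function, and then appeal to the residue theorem on $\mathbb C/L$. Your derivation of the push-forward formula via Atiyah--Bott localization is a legitimate alternative to the paper's Lemma~\ref{lem:H}, which obtains the same expression by an elementary polynomial-division/interpolation argument (and thereby avoids equivariant cohomology altogether).

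However, your final step contains a genuine gap. Applying the residue theorem \emph{separately} to $F_1(u)=\prod_l h_\psi(u+x_l)$ and $F_2(u)=\prod_l h_\psi(u-x_l)$ transforms $(\star)$ into the identity between the two ``secondary'' sums
\[
\sum_i\prod_{l\neq i}h_\psi(x_l-x_i+z)\;=\;\sum_i\prod_{l\neq i}h_\psi(x_i-x_l+z),
\]
which is exactly $(\star)$ again with each argument shifted by $z$; so nothing has been gained. The observation that $h_\psi(-u)$ corresponds to replacing $(w,z)$ by $(-w,-z)$ is true (up to a sign) but does not help here, since that replacement produces a \emph{different} elliptic function, not the original $h_\psi$.

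What is actually needed---and what the paper isolates as its key property~(3)---is the functional equation
\[
h_\psi(u+z)=h_\psi(-u),
\]
which follows directly from the $\sigma$-product of Lemma~\ref{lem:h=sigmaprod} together with the oddness of~$\sigma$ (the map $u\mapsto z-u$ swaps the two zeros and the two poles of $h_\psi$). With this in hand, a \emph{single} application of the residue theorem to $F_1$ suffices: the residues at the primary poles $-x_i$ give the left side of $(\star)$, the residues at the secondary poles $-x_i+z$ give $-\sum_i\prod_{l\neq i}h_\psi(x_l-x_i+z)=-\sum_i\prod_{l\neq i}h_\psi(x_i-x_l)$, and their sum is zero. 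Replace your $(w,z)\to(-w,-z)$ remark by this functional equation and your argument closes.
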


\begin{proof}[Proof of Theorem \ref{thm1}]
Consider the basis sequence $\alpha_1,\alpha_2,\ldots$ of $\Omega^U_{\ast}\otimes \mathbb Z[1/2]$ of Proposition \ref{prop:basisseqMU} and think of the $\psi$-genus restricted to $\Omega_\ast^U\otimes \mathbb Z[1/2]$.
Since $\alpha_m\in \mathcal I^{U}$ holds for all $m\geq 5$, Proposition \ref{prop:kerpsi} implies that $\psi$ factors through the quotient $\mathbb Z[1/2][\alpha_1,\alpha_2,\alpha_3,\alpha_4]$.
As $\alpha_1$ up to $\alpha_4$ are just complex projective spaces in the respective dimensions, the concrete calculations (\ref{psi(CP^1)})-(\ref{psi(CP^4)}) yield that the induced map 
\[ \mathbb Z[1/2][\alpha_1,\alpha_2,\alpha_3,\alpha_4] \rightarrow \mathbb Q[q_1,q_2,q_3,q_4]
\]
is injective with image $\mathbb Z[1/2][q_1,q_2,q_3,q_4]$ and it remains to prove that the kernel of $\psi$ equals $\mathcal I^{U}\otimes \mathbb Z[1/2]$. 
The injectivity of the induced map shows 
\[
\text{ker}(\psi)=\left\langle \alpha_5,\alpha_6,\ldots \right\rangle
\] 
which by construction of the $\alpha_i$'s is contained in $\mathcal I^{U}\otimes \mathbb Z[1/2]$. 
Conversely, by Proposition \ref{prop:kerpsi} the ideal $\mathcal I^{U}\otimes \mathbb Z[1/2]$ is contained in the kernel of $\psi$. 
Altogether this proves that $\ker(\psi)=\mathcal I^{U}\otimes \mathbb Z[1/2]$ holds, as desired.
\end{proof}

\subsection{A special basis sequence} \label{subsec:basisseq}

\begin{proof}[Proof of Proposition \ref{prop:basisseqMU}]
By Theorem \ref{thm:milnor} a sequence $\left(\alpha_m\right)_{m\geq 1}$ with $\alpha_m \in\Omega_{2m}^U\otimes \mathbb Z[1/2]$ is a basis sequence of $\Omega_{\ast}^U\otimes \mathbb Z[1/2]$ if and only if the following holds:
\begin{align*}
s_m(\alpha_m)=
\begin{cases}
&\pm p\cdot 2^a\ \ \text{for some $a$, if $m+1$ is a power of the odd prime $p$,}\\
&\pm 2^a\ \ \text{for some $a$, if $m+1$ is not a power of an odd prime.}
\end{cases}
\end{align*}
Because of $s_m(\mathbb CP^{m})=m+1$, we may choose $\alpha_m=\mathbb CP^m$ for $m=1,2,3,4$.

We define $\gcd(\mathcal I^U_{m})$ to be the greatest common odd divisor of all Thom-Milnor numbers $s_m(\mathbb P(E))-s_m(\mathbb P(E^\ast))$, where $E\rightarrow B$ is a holomorphic vector bundle from Example \ref{ex:1} or \ref{ex:2} such that $\mathbb P(E)$ has complex dimension $m$. 
Then, in order to show that a basis sequence with the desired property exists, it is enough to show for $m\geq 5$ that $\gcd(\mathcal I^U_{m})$ equals $1$ if $m+1$ is not a power of an odd prime and that it is equal to $p$ if $m+1$ is a power of the odd prime $p$.
The difference $\mathbb P(E)-\mathbb P(E^\ast)$ only has a chance to be nontrivial if the complex dimension $m$ of $\mathbb P(E)$ is bigger then the complex dimension $n$ of the base manifold and in that case  $s_m(\mathbb P(E))=(-1)^n s_m(\mathbb P(E^\ast))$ holds by Lemma \ref{lem:milnornr}.
This shows that $\gcd(\mathcal I^U_{m})$ equals the greatest common odd divisor of all Thom-Milnor numbers $s_m(\mathbb P(E))$, where $n<m$ is odd and $E$ is taken from Example \ref{ex:1} or \ref{ex:2}.
Using the calculations of Example \ref{ex:2}, this shows the following:

\begin{claim} \label{claimgcd}
The number $\gcd(\mathcal I^U_{m})$ defined above is an odd divisor of
\begin{align} \label{eq:div1}
(-1)^{i_1}\binom{m-1}{i_1}+(-1)^{i_2}\binom{m-1}{i_2}+(-1)^{i_3}\binom{m-1}{i_3} +m-n-2 \ ,
\end{align}
where $n=i_1+i_2+i_3\leq m-2$ is odd.
\end{claim}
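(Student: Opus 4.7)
The plan is essentially to unwind the definition of $\gcd(\mathcal I^U_{m})$ and apply Example \ref{ex:2} directly, since the displayed quantity in (\ref{eq:div1}) is (up to a sign) precisely the Thom--Milnor number produced there.

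First I would fix any partition $n=i_1+i_2+i_3$ of an odd integer $n$ with $n\le m-2$, so that the hypothesis $m\ge n+2$ of Example \ref{ex:2} is satisfied. Applying that example, I obtain a holomorphic rank $k=m-n$ bundle $E\to B=\mathbb CP^{i_1}\times \mathbb CP^{i_2}\times \mathbb CP^{i_3}$ whose projectivization $\mathbb P(E)$ has complex dimension $m$, and whose Thom--Milnor number is given by formula (\ref{ex2:milnor}):
\[
s_m(\mathbb P(E))=(-1)^n\cdot\left(\sum_{l=1}^3(-1)^{i_l}\binom{m-1}{i_l} +m-n-2\right).
\]
Since $n$ is odd, the prefactor $(-1)^n$ equals $-1$, so $s_m(\mathbb P(E))$ is $-1$ times the expression in (\ref{eq:div1}). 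In particular, any integer that divides $s_m(\mathbb P(E))$ also divides (\ref{eq:div1}).

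Finally, I would appeal to the paragraph immediately preceding the claim, where it is observed that $\gcd(\mathcal I^U_{m})$ equals the greatest common odd divisor of the Thom--Milnor numbers $s_m(\mathbb P(E))$, ranging over bundles $E$ from Examples \ref{ex:1} and \ref{ex:2} with odd base dimension $n<m$. Since the $E$ constructed above is one of these bundles, $\gcd(\mathcal I^U_{m})$ must divide $s_m(\mathbb P(E))$, and hence divides (\ref{eq:div1}). That $\gcd(\mathcal I^U_{m})$ is odd is built into its definition. There is really no hard step here: the claim is bookkeeping that rephrases Example \ref{ex:2} in the form that will be most useful in the arithmetic divisibility argument to come (identifying $\gcd(\mathcal I^U_{m})$ with $1$ or $p$ according to whether $m+1$ is a prime power).
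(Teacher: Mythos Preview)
Your proposal is correct and matches the paper's approach exactly: the paper does not give a separate proof of this claim but simply says ``Using the calculations of Example~\ref{ex:2}, this shows the following'' after having established (in the preceding paragraph) that $\gcd(\mathcal I^U_m)$ is the greatest common odd divisor of the $s_m(\mathbb P(E))$ with $n$ odd. You have unpacked precisely that one-line justification.
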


For $m\geq 5$ we may choose $i_1=i_2=i_3=1$ in this claim and it follows that $\gcd(\mathcal I^U_{m})$ is an odd divisor of $-3(m-1) +m-5=-2(m+1)$.
This implies that
\begin{align} \label{eq:div2}
\gcd(\mathcal I^U_{m}) \ \ \text{divides}\ \ m+1\ \ \text{for}\ \ m\geq 5\ .
\end{align}
Let us first consider the case of an odd integer $m\geq5$. 
Using the calculation of  Example \ref{ex:1} with $i_1=0$ and $n=i_2=m-2$ (which is odd), it follows that $\gcd(\mathcal I^U_m)$ is an odd divisor of 
\[
1-\binom{m-1}{m-2}+ \left(m-(m-2)-1\right)=-m+3 \ .
\]
Together with (\ref{eq:div2}) this shows $\gcd(\mathcal I^U_m)=1$, as desired.

It remains to deal with the case of an even integer $m\geq5$.
For any natural number $1\leq i\leq m/2-2$ consider the two integers in (\ref{eq:div1}), where $(i_1,i_2,i_3)$ is one of the triples $(i,i,n-2i)$ or $(i-1,i+1,n-2i)$ and $n=m-3$ holds.
By Claim \ref{claimgcd}, subtraction of both integers and changing the sign of the result if necessary shows that $\gcd(\mathcal I^U_{m})$ divides
\begin{align*}
\binom{m-1}{i}+\binom{m-1}{i}+\binom{m-1}{i-1}+\binom{m-1}{i+1} \ .
\end{align*}
Then using the formula $\binom{n-1}{k-1}+\binom{n-1}{k}=\binom{n}{k}$ twice shows:
\begin{align} \label{eq:div3}
\gcd(\mathcal I^U_{m})\ \ \text{divides}\ \ \binom{m+1}{i+1} \ \ \text{for all}\ \ 1\leq i\leq m/2-2\ .
\end{align}
Now suppose that $p$ is an odd prime divisor of $\gcd(\mathcal I^U_{m})$.
Then, by (\ref{eq:div2}), we may write $m+1=p^s\cdot r$ for some $s\geq 1$ and an integer $r$ not divisible by $p$.
Suppose $r\neq 1$. 
Then, since $m$ is even, $p$ and $r$ are both $\geq 3$. 
This implies $p^s-1\leq m/2-2$, so that (\ref{eq:div3}) yields $\binom{p^s\cdot r}{p^s} \equiv 0 \pmod p$. 
This is a contradiction to $\binom{p^s\cdot r}{p^s} \equiv r \pmod p$, which follows by comparing the coefficient of $X^{p^s}$ in the mod $p$ reduction of the following polynomial:
\begin{align*}
(1+X)^{p^s\cdot r}\equiv (1+X^{p^s})^r \pmod p \ .
\end{align*}
Hence, $m+1=p^s$ is a power of $p$ and it remains to prove that $p^2$ does not divide $\gcd(\mathcal I^U_m)$. 
If $s=1$, then this follows from (\ref{eq:div2}).
If $s\geq 2$, then (\ref{eq:div3}) implies that $\gcd(\mathcal I^U_{m})$ divides $\binom{p^{s}}{p^{s-1}}$. 
For $0< l < p^{s-1}$, the numerator of the reduced fraction $\frac{p^s-l}{l}$ is not divisible by $p$.
Therefore, 
\[\binom{p^{s}}{p^{s-1}}=\frac{p^s}{p^{s-1}}\cdot \frac{p^s-1}{1}\cdot \cdots \cdot \frac{p^s-\left(p^{s-1}-1\right)}{p^{s-1}-1} 
\] 
is not divisible by $p^2$. 
This finishes the proof of Proposition \ref{prop:basisseqMU}.
\end{proof}

\subsection{The values of genera on projectivizations} \label{subsec:valueofphi}
In this subsection we prove Proposition \ref{prop:kerpsi}. 
The proof will make use of a similar strategy which S. Ochanine used in order to show that his elliptic genus $\varphi_{Oc}$ on the oriented bordism ring vanishes on projectivizations of complex vector bundles of even rank, see \cite{ochanine}.
We first need the following Lemma:

\begin{lemma}\label{lem:H}
Let $\varphi:\Omega^U_{\ast}\otimes \mathbb Q\rightarrow R$ be a genus whose characteristic power series $Q_{\varphi}(x)$ is written in the form $x\cdot h_\varphi(x)$.
Consider a complex vector bundle $E$ with Chern roots $x_1,\ldots ,x_k$ over a stably almost complex manifold $B$ with Chern roots $w_1,\ldots ,w_n$. 
Define $H_\varphi(x_1,\ldots ,x_k):=\sum_{i=1}^k\prod_{j\neq i}h_\varphi(x_j-x_i)$.
Then the following holds:
\[ \varphi(\mathbb P(E))=\int_{B} Q_\varphi(w_1)\cdot\cdots \cdot Q_\varphi(w_n)\cdot H_\varphi(x_1,\ldots,x_k) \ .
\]
\end{lemma}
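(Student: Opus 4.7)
The plan is to combine the Chern root description of $\mathbb P(E)$ from Lemma \ref{lem:cPE} with a push-forward formula for the projection $\pi: \mathbb P(E) \to B$. By Lemma \ref{lem:cPE} and the general formula (\ref{phi=int}), the value of $\varphi$ on $\mathbb P(E)$ is
\[
\varphi(\mathbb P(E)) = \int_{\mathbb P(E)} \prod_{j=1}^n Q_\varphi(w_j) \cdot \prod_{i=1}^k Q_\varphi(y+x_i),
\]
so since $\prod_j Q_\varphi(w_j)$ is pulled back from the base, the task reduces to computing the fiber integral of $\prod_i Q_\varphi(y + x_i)$ over $\pi$.

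The main tool I will use is the identity
\[
\pi_* G(y) = \sum_{i=1}^k \frac{G(-x_i)}{\prod_{j \neq i}(x_j - x_i)}
\]
for $G(y) \in H^*(B)[y]$, which is the push-forward formula for $\pi$ expressed as a sum of residues at the formal roots $y = -x_i$ of the Leray--Hirsch relation $\prod_j(y + x_j) = 0$ from (\ref{eq:LerayHirsch}). It can be derived either by applying Lemma \ref{lem:alpha} monomial by monomial and simplifying via a partial fraction identity, or verified directly by checking the cases $G(y) = y^m$ for $m = 0, \ldots, k-1$ and extending linearly. Although $\prod_l Q_\varphi(y + x_l)$ is formally a power series rather than a polynomial, only finitely many terms contribute in $H^*(\mathbb P(E))$, so truncating to a sufficiently high-degree polynomial makes the formula applicable.

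Substituting $y = -x_i$ into $G(y) := \prod_l Q_\varphi(y+x_l)$ and isolating the factor with $l = i$, we use $Q_\varphi(0) = 1$ to eliminate that factor. For the remaining $l \neq i$, the identity $Q_\varphi(x) = x \cdot h_\varphi(x)$ factors each term as $(x_l - x_i)\, h_\varphi(x_l - x_i)$, so the numerator $\prod_{l \neq i}(x_l - x_i)$ cancels precisely with the denominator $\prod_{j \neq i}(x_j - x_i)$ of the push-forward formula, leaving $\prod_{l \neq i} h_\varphi(x_l - x_i)$. Summing over $i$ yields exactly $H_\varphi(x_1, \ldots, x_k)$, and multiplying back by $\prod_j Q_\varphi(w_j)$ and integrating over $B$ gives the claimed identity.

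The main obstacle I expect is justifying the push-forward formula cleanly and navigating the apparent $0 \cdot \infty$ indeterminacy at $l = i$. Writing $Q_\varphi$ in the form $x \cdot h_\varphi(x)$ is not a notational convenience but the essential device that separates the zero contributed by the factor $(y + x_i)$ at $y = -x_i$ from the singular factor $h_\varphi$, whose finite part, together with the cancellation above, carries the geometric content of $\varphi$ on the projective bundle.
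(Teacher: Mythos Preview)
Your proposal is correct and follows essentially the same approach as the paper: both arguments evaluate $\prod_{l} Q_\varphi(y+x_l)$ at the points $y=-x_i$, use $Q_\varphi(0)=1$ to kill the $l=i$ factor, and then exploit $Q_\varphi(x)=x\,h_\varphi(x)$ to cancel the Lagrange denominators $\prod_{j\neq i}(x_j-x_i)$. The only difference is packaging---the paper carries this out explicitly via polynomial division with remainder and Lagrange interpolation to extract the $y^{k-1}$-coefficient, whereas you quote the equivalent push-forward (residue) formula for $\pi_\ast$; the underlying computation is identical.
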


For genera on the oriented bordism ring S. Ochanine showed this Lemma in \cite{ochanine} in the proof of his Proposition 6, see also \cite[p. 51]{HirzManiModForms}.
The same proof works for Lemma \ref{lem:H}: 

\begin{proof}[Proof of Lemma \ref{lem:H}]
By the calculation of the total Chern class of $\mathbb P(E)$ in (\ref{cPE}), we obtain:
\begin{equation} \label{eq:phiPE}
\varphi(\mathbb P(E))=\int_{\mathbb P(E)} Q_\varphi(w_1)\cdot\cdots \cdot Q_\varphi(w_n)\cdot Q_\varphi(y+x_1)\cdot \cdots \cdot Q_\varphi(y+x_k) \ ,
\end{equation}
where $y$ denotes the first Chern class of the dual bundle of the tautological line bundle over $\mathbb P(E)$.
Truncating the power series $Q_\varphi(x)$ above degree $\dim_\mathbb R (\mathbb P(E))$ does not change the value $\varphi(\mathbb P(E))$ and so we may assume that $Q_\varphi(x)$ is a polynomial in $x$.

In the following paragraph, we regard $x_1, \ldots ,x_k$ and $y$ as formal variables and work in the polynomial ring $R[x_1,\ldots ,x_k][y]$. 
Since $\prod_{i=1}^{k}(y+x_i)$ is a normalized polynomial in $y$, symmetric in the $x_i$'s, division with remainder yields unique polynomials $F$ and $G$, symmetric in the $x_i$'s, such that $Q_\varphi(y+x_1)\cdot \cdots \cdot Q_\varphi(y+x_k)$ equals
\begin{equation} \label{eq:Q(E)}
F(x_1,\ldots ,x_k,y)+\left(\prod_{i=1}^{k}(y+x_i)\right)\cdot G(x_1,\ldots ,x_k,y) \ ,
\end{equation}
where $F$ has degree $<k$ in $y$.
Using $Q_\varphi(0)=1$, this yields for $i=1,\ldots ,k$:
\begin{equation*} \label{eq1:F}
F(x_1,\ldots ,x_k,-x_i)=\prod_{j\neq i}Q_\varphi (x_j-x_i) \ .
\end{equation*}
We are now in the situation of having a polynomial of degree $<k$ and knowing the value of this polynomial on $k$ different points $-x_1,\ldots ,-x_k$. 
This determines $F$ uniquely:
\begin{equation} \label{eq2:F}
F(x_1,\ldots ,x_k,y)=\sum_{i=1}^{k}\prod_{j\neq i}Q_\varphi(x_j-x_i) \frac{x_j+y}{x_j-x_i}\ .
\end{equation}
Note that it in particular follows that the right hand side of the above equation, which a priori is an element in $R(x_1,\ldots ,x_k)[y]$, in fact lies in $R[x_1,\ldots, x_k][y]$.

At this point we return to the original meaning of $x_1,\ldots ,x_k$ and $y$ in the cohomology ring $H^{\ast}(\mathbb P(E))$ and the equations (\ref{eq2:F}), as well as (\ref{eq:Q(E)}), still hold since they hold for formal variables.
Because of the relation 
\[\prod_{i=1}^{k}(y+x_i)=y^k+c_1(E)y^{k-1}+\cdots +c_k(E)=0
\]
in $H^{\ast}(\mathbb P(E))$, (\ref{eq:Q(E)}) yields in (\ref{eq:phiPE}):
\begin{equation*}
\varphi(\mathbb P(E))=\int_{\mathbb P(E)}Q_\varphi(w_1)\cdot\cdots \cdot Q_\varphi(w_n)\cdot F(x_1,\ldots ,x_k,y) \ .
\end{equation*}
Since $Q_\varphi(w_1)\cdot\cdots \cdot Q_\varphi(w_n)$ and every symmetric expression in $x_1,\ldots ,x_k$ are cohomology classes of the basis $B$, only terms containing $y^{l}$ for some $l\geq k-1$ give a nontrivial contribution to the above integral.
Furthermore, because $F$ is a polynomial of degree $<k$ and $y^{k-1}$ integrated over each fiber of $\pi:\mathbb P(E)\rightarrow B$ gives $1$, we get:
\[\varphi(\mathbb P(E))=\int_{B} Q_\varphi(w_1)\cdot \cdots \cdot Q_\varphi(w_n)\cdot\left[\text{coefficient of }y^{k-1}\text{ in }F(x_1,\ldots ,x_k,y)\right] \ .
\] 
Now equation (\ref{eq2:F}) shows that the coefficient of $y^{k-1}$ in $F$ equals $H_\varphi(x_1,\ldots x_k)$ and we are done.
\end{proof}

\begin{proof}[Proof of Proposition \ref{prop:kerpsi}] 
Assume we have a complex vector bundle $E$ over a stably almost complex manifold $B$. 
Denote the Chern roots of $E$ by $x_1,\ldots, x_k$ and those of $B$ by $w_1,\ldots ,w_n$.
It suffices to prove $\psi(\mathbb P(E))=\psi(\mathbb P(E^{\ast}))$ and because of Lemma \ref{lem:H} and $c(E^{\ast})=(1-x_1)\cdot \cdots \cdot (1-x_k)$ it is enough to show that the expression
\begin{equation}\label{eq:funeqH}
H_\psi(x_1,\ldots ,x_k)-H_\psi(-x_1,\ldots ,-x_k)
\end{equation}
vanishes, where $H_\psi(x_1,\ldots ,x_k)$ is defined to be $\sum_{i=1}^k \prod_{j\neq i}h_\psi(x_j-x_i)$. 
In (\ref{eq:funeqH}) the $x_i$'s are (formal) cohomology classes, but it is enough to prove this identity for formal variables $x_1,\ldots ,x_k$.
Lemma \ref{lem:h=solofdgl} characterizes $h_\psi$ as the unique solution of (\ref{eq:DGLforh}) and it follows from this description that the coefficients of $h_\psi$ are homogeneous polynomials in the variables $q_1,\ldots ,q_4$.
This implies that in equation (\ref{eq:funeqH}) each coefficient of an monomial in the $x_i$'s is a polynomial expression in $q_1,q_2,q_3$ and $q_4$.
To show that all these expressions vanish it is enough to see that they vanish for all $(q_1,q_2,q_3,q_4)$ in some open, nonempty subset $U\subseteq \mathbb C^4$. 
We choose 
\[U:=\left\{(q_1,q_2,q_3,q_4)\ |\ \text{discr}(t^4+q_1t^3+q_2t^2+q_3t+q_4)\neq0\right\}
\] 
and fix some $(q_1,q_2,q_3,q_4)\in U$. 
Now Lemmata \ref{sol=ellfun} and \ref{lem:h=sigmaprod} apply, i.e. there is a lattice $L\subseteq \mathbb C$ such that $h_\psi(x)$ is an elliptic function, explicitly given by
\[
h_\psi(x)=\frac{\sigma(x-w)\sigma(x+w-z)\sigma(-z)}{\sigma(x)\sigma(x-z)\sigma(w-z)\sigma(-w)} \ ,
\]
where $w,z\in \mathbb C \setminus L$ are two modulo $L$ distinct points. 
The fact that the Weierstra\ss\ $\sigma$-function $\sigma(x)=x+O(x^2)$ is an odd, entire function on $\mathbb C$ with zeros precisely at all lattice points of $L$, yields the following three properties:
\begin{enumerate} 
	\item $\Div(h_\psi)=[w]+[z-w]-[0]-[z]$ , \label{property1}
	\item $\res_0(h_\psi)=1$ and $\res_z(h_\psi)=-1$ ,
	\item $h_\psi(x+z)=h_\psi(-x)$ . \label{property3}
\end{enumerate}

To show that the expression (\ref{eq:funeqH}) vanishes for formal variables $x_1,\ldots ,x_k$, it is enough to show this for all $(x_1, \ldots ,x_k)$ in some dense subset $V\subseteq \mathbb C^k$. 
We choose $V$ to be the subset consisting of all points $(x_1, \ldots ,x_k)\in \mathbb C^k$ such that for all $i\neq j$ the elliptic functions $h_\psi(x_i+x)$ and $h_\psi(x_j+x)$ in $x$ have no poles in common. 
Fixing such a tuple, we define the following elliptic function on $\mathbb C/L$:
\[\tilde h_\psi(x):=\prod_{j=1}^k h_\psi(x_j+x)\ . 
\]
From the choice of $(x_1, \ldots ,x_k)$ it follows that $\tilde h_\psi$ has poles precisely of order one at the points $-x_i$ and $-x_i+z$ for $i=1,\ldots ,k$.
According to one of Liouville's theorems the sum of the residues of an elliptic function vanishes. 
Using the properties \ref{property1}-\ref{property3} of $h_\psi$ stated above, this yields:
\begin{align*}
0&= \sum_{x\in \mathbb C/L} \res_x(\tilde h_\psi) \\
 &= \sum_{i=1}^k \prod_{j\neq i} h_\psi(x_j-x_i)-\sum_{i=1}^k \prod_{j\neq i}h_\psi(x_j-x_i+z) \\
 &= \sum_{i=1}^k \prod_{j\neq i} h_\psi(x_j-x_i)-\sum_{i=1}^k \prod_{j\neq i}h_\psi(-x_j+x_i) \\
 &=H_\psi(x_1,\ldots ,x_k)-H_\psi(-x_1,\ldots ,-x_k) \ .
\end{align*}
Thus (\ref{eq:funeqH}) vanishes, which completes the proof of Proposition \ref{prop:kerpsi}.
\end{proof}

\section{A description of $\psi$ in terms of multiplicativity} \label{sec:multiplicativity}
We defined a new elliptic genus, the so called $\psi$-genus in section \ref{sec:dualellgen} and showed in section \ref{sec:mainthm} that this is the universal dualization invariant genus for complex manifolds.
However, in the past it turned out that all elliptic genera, known so far, can be characterized by some universal multiplicativity property. 
Namely, Ochanine's elliptic genus is the universal multiplicative genus for spin manifolds and Krichever-Höhn's elliptic genus is the universal multiplicative genus for $SU$-manifolds.
It is therefore natural to ask whether there is also a description of $\psi$ in terms of multiplicativity. 
This section's theorem answers that question positively:

\begin{theorem} \label{thm:multiplic}
The $\psi$-genus is the universal genus on the rational complex bordism ring which is multiplicative in projectivizations $\mathbb P(E)$ of complex vector bundles $E\rightarrow B$ over Calabi-Yau $3$-folds $B$.
\end{theorem}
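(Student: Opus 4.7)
The theorem has two parts: (i) $\psi$ itself is multiplicative in projectivizations $\mathbb P(E) \to B$ over Calabi-Yau $3$-folds, and (ii) $\psi$ is universal with this property. I would treat them in turn.

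For (i), by Lemma \ref{lem:H} the desired identity $\psi(\mathbb P(E)) = \psi(B) \cdot \psi(\mathbb CP^{k-1})$ is equivalent to
\[
\int_B Q_\psi(w_1) Q_\psi(w_2) Q_\psi(w_3) \cdot \bigl[H_\psi(x_1,\ldots,x_k) - \psi(\mathbb CP^{k-1})\bigr] = 0,
\]
where $w_1,w_2,w_3$ are the Chern roots of $B$ and $x_1,\ldots,x_k$ those of $E$. I would exploit the elliptic-function presentation of $h_\psi$ from Lemmas \ref{sol=ellfun} and \ref{lem:h=sigmaprod} via a residue argument modelled on the proof of Proposition \ref{prop:kerpsi}: apply Liouville's theorem to a suitable meromorphic function on $\mathbb C/L$ built from $\prod_j h_\psi(x_j+x)$ (and possibly its translates or derivatives) to extract an identity for $H_\psi(x_1,\ldots,x_k) - \psi(\mathbb CP^{k-1})$. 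The key technical step is then to check that the degree-$6$ part of the resulting integrand---the only part that contributes on a $3$-fold---vanishes once the Calabi-Yau constraint $c_1(TB)=0$ is imposed.

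For (ii), let $J \subseteq \Omega^U_\ast \otimes \mathbb Q$ be the ideal generated by all elements $\mathbb P(E) - B \cdot \mathbb CP^{k-1}$ with $B$ a Calabi-Yau $3$-fold and $E \to B$ a rank-$k$ bundle. Part (i) gives $J \subseteq \ker \psi$, so $\psi$ induces a surjection $(\Omega^U_\ast / J) \otimes \mathbb Q \twoheadrightarrow \mathbb Q[q_1,q_2,q_3,q_4]$, and it suffices to prove injectivity. For each $m \geq 5$ I would exhibit an element of $J_{2m}$ with nonzero Thom-Milnor number. Concretely, let $B$ be a smooth quintic hypersurface in $\mathbb CP^4$ (so $\int_B H^3 = 5$ for $H$ the restriction of the hyperplane class) and set $E_m := \mathcal O_B(1) \oplus \underline{\mathbb C}^{m-3}$ of rank $k_m = m-2$. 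Since $\dim_{\mathbb C} B > 0$ and $\dim_{\mathbb C}\mathbb CP^{m-3} > 0$ for $m \geq 5$, the Thom-Milnor number $s_m(B \cdot \mathbb CP^{m-3})$ vanishes, whereas Lemma \ref{lem:milnornr} yields
\[
s_m(\mathbb P(E_m)) = \tfrac{5}{6}(m-3)(m-4)(m+1) \neq 0.
\]
Thus $\beta_m := \mathbb P(E_m) - B \cdot \mathbb CP^{m-3}$ lies in $J$ and has nontrivial Thom-Milnor number, so by Theorem \ref{thm:milnor} the sequence $\mathbb CP^1, \mathbb CP^2, \mathbb CP^3, \mathbb CP^4, \beta_5, \beta_6, \ldots$ is a $\mathbb Q$-algebra basis of $\Omega^U_\ast \otimes \mathbb Q$. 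Since all $\beta_m$ lie in $J$, the quotient $(\Omega^U_\ast / J) \otimes \mathbb Q$ is generated by the classes of $\mathbb CP^1,\ldots,\mathbb CP^4$; Lemma \ref{lem:phi=surj} shows that their images in $\mathbb Q[q_1,q_2,q_3,q_4]$ are algebraically independent, forcing the surjection to be an isomorphism.

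\textbf{Main obstacle.} Step (i) is the decisive part. The residue argument of Proposition \ref{prop:kerpsi} exploited a symmetric $x \mapsto -x$ cancellation on $h_\psi$, whereas here the vanishing must be sensitive precisely to the Calabi-Yau condition $c_1(TB)=0$ and to the complex dimension $3$ of the base, and must hold uniformly for arbitrary complex bundles $E$ (not only those making $\mathbb P(E)$ an $SU$-manifold). Identifying the right auxiliary elliptic function on $\mathbb C / L$ whose residue sum captures exactly this cancellation is the main technical hurdle.
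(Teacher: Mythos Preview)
Your outline for part (ii) is correct and matches the paper's argument almost verbatim; the only difference is your choice of Calabi-Yau $3$-fold (a quintic in $\mathbb CP^4$ instead of the paper's $C^3$ for an elliptic curve $C$), which is immaterial over $\mathbb Q$.

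For part (i), however, you are making the problem much harder than it is. No new residue argument is needed: the multiplicativity follows from what has \emph{already} been proved in Proposition \ref{prop:kerpsi}, together with a short degree count. The key observation is that the identity
\[
H_\psi(x_1,\ldots,x_k)=H_\psi(-x_1,\ldots,-x_k),
\]
established in the proof of Proposition \ref{prop:kerpsi}, means that $H_\psi$, as a formal power series in the Chern roots, has no terms of odd total degree. Since each $x_i$ has cohomological degree $2$, the class $H_\psi(x_1,\ldots,x_k)$ vanishes in all cohomological degrees $\equiv 2\pmod 4$. On a $3$-fold the integrand in
\[
\psi(\mathbb P(E))=\int_B Q_\psi(w_1)Q_\psi(w_2)Q_\psi(w_3)\cdot H_\psi(x_1,\ldots,x_k)
\]
is read in degree $6$, so only the degree $0$ and degree $4$ parts of $H_\psi$ can contribute. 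But the degree $2$ part of $Q_\psi(w_1)Q_\psi(w_2)Q_\psi(w_3)$ is a multiple of $c_1(B)$, which vanishes by the Calabi-Yau hypothesis; hence the degree $4$ part of $H_\psi$ pairs with zero. What remains is the degree $0$ part $H_\psi(0,\ldots,0)$, and Lemma \ref{lem:H} applied to the trivial bundle identifies this constant with $\psi(\mathbb CP^{k-1})$. Thus $\psi(\mathbb P(E))=\psi(B)\cdot\psi(\mathbb CP^{k-1})$.

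So the ``main obstacle'' you flag is not an obstacle at all: the dependence on $c_1(B)=0$ and on $\dim_{\mathbb C}B=3$ enters through this elementary parity/degree argument, not through a new elliptic identity.
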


\begin{proof}
Let us define the ideal $I$ in $\Omega_\ast^U$ to be generated by differences 
\[
\mathbb P(E)-B\times \mathbb CP^{k-1} \ ,
\] 
where $E\rightarrow B$ is a complex vector bundle of rank $k>0$ over some Calabi-Yau $3$-fold $B$ (i.e. $B$ is a compact Kähler manifold with vanishing first Chern class).
Since we already know that $\psi$ is surjective (Lemma \ref{lem:phi=surj}), it remains to prove that the kernel of $\psi$ coincides with $I\otimes \mathbb Q$.

\begin{claim} \label{claim:thm:multiplic}
There is a sequence of ring generators $\gamma_1,\gamma_2,\ldots$ of $\Omega_\ast^U\otimes \mathbb Q$, such that $\gamma_m=\mathbb CP^m$ holds for all $m\leq 4$ and $\gamma_m\in I$ for all $m\geq 5$.
\end{claim}

\begin{proof}
It is clear that for $m\leq 4$ the element $\gamma_m:=\mathbb CP^m$ is a generator. 
In order to construct $\gamma_m$ for $m\geq 5$, we pick some elliptic curve $C$ and consider the Calabi-Yau $3$-fold $B:=C^3$.
For some point $p$ on $C$ we consider the line bundle $\mathcal O(p)\rightarrow C$, associated to the divisor $[p]$.
Let us denote the projection of $B$ onto its factors by $\pi_1$, $\pi_2$ and $\pi_3$, and consider the complex line bundle $L:=\pi_1^\ast \mathcal O(p)\otimes \pi_2^\ast \mathcal O(p)\otimes \pi_3^\ast \mathcal O(p)$.
Since $c_1(\mathcal O(p))$ integrated over the elliptic curve $C$ equals the degree of the divisor $[p]$, which is $1$, we obtain:
\begin{align*}
\int_{B}c_1(L)^3&=\int_{B}\left(\pi_1^\ast c_1(\mathcal O(p))+\pi_2^\ast c_1(\mathcal O(p))+\pi_3^\ast c_1(\mathcal O(p))\right)^3 \\
									&=\int_{B}6\cdot\pi_1^\ast c_1(\mathcal O(p))\cdot \pi_2^\ast c_1(\mathcal O(p))\cdot\pi_3^\ast c_1(\mathcal O(p))
									=6
\end{align*}
Let us now define for every $m\geq 5$ the vector bundle $E_m:=L\oplus \underline{\mathbb C}^{m-3}\rightarrow B$.
Then the Chern roots of $E_m$ are: $x_1=c_1(L)$ and $x_l=0$ for $l\geq 2$.
Therefore, in the formula for $s_m(\mathbb P(E_m))$ in Lemma \ref{lem:milnornr} only the summand with $r_1=3$ and $r_l=0$ for $l\geq 2$ survives. 
This yields:
\begin{align*}
s_m(\mathbb P(E_m))=
-\left(-\binom{m-1}{3}+(m-3) \right)\cdot 6 = 
(m-4)(m-3)(m+1) \ .
\end{align*}
Since the Thom-Milnor number of a proper product always vanishes, this implies that for all $m\geq 5$ the element $\gamma_m:=\mathbb P(E_m)-B\times \mathbb CP^{m-3}$ is a generator of $\Omega_\ast^U\otimes \mathbb Q$ in degree $2m$.
This completes the proof of Claim \ref{claim:thm:multiplic}.
\end{proof}

\begin{claim} \label{claim2:thm:multiplic}
The genus $\psi:\Omega_\ast^U\otimes \mathbb Q\rightarrow \mathbb Q[q_1,q_2,q_3,q_4]$ vanishes on $I\otimes \mathbb Q$.
\end{claim}

\begin{proof}
Let $E\rightarrow B$ be some complex vector bundle of rank $k$ with Chern roots $x_1,\ldots ,x_{k}$ over a Calabi-Yau $3$-fold $B$ with Chern roots $w_1,w_2,w_3$.
It suffices to prove 
\[\psi(\mathbb P(E))=\psi(B)\cdot \psi(\mathbb CP^{k-1})\ .
\] 
Lemma \ref{lem:H} yields 
\begin{equation} \label{eq:multiplic}
\psi(\mathbb P(E))=\int_B Q_\psi(w_1)Q_\psi(w_2)Q_\psi(w_3)\cdot H_\psi(x_1,\ldots ,x_{k}) \ ,
\end{equation}
where only the degree $\dim_\mathbb R (B)=6$ part is integrated.

We have shown in the proof of Proposition \ref{prop:kerpsi} that the expression in (\ref{eq:funeqH}):
\[
H_\psi(x_1,\ldots ,x_{k})-H_\psi(-x_1,\ldots ,-x_{k}) \ ,
\]
vanishes for all $x_i$.
This implies that the even cohomology class $H_\psi(x_1,\ldots ,x_{k})$ is zero in all degrees $\equiv 2 \pmod 4$.
Therefore, only the degree $0$ and degree $4$ part of $H_\psi(x_1,\ldots ,x_{k})$ give a nontrivial contribution to (\ref{eq:multiplic}).
However, the degree $2$ part of $Q_\psi(w_1)Q_\psi(w_2)Q_\psi(w_3)$ is some multiple of $c_1(B)$, which vanishes by assumption.
This shows that only the degree $0$ part of $H_\psi(x_1,\ldots ,x_{k})$ contributes non-trivially in (\ref{eq:multiplic}).
This implies:
\begin{equation*} 
\psi(\mathbb P(E))=\int_B Q_\psi(w_1)Q_\psi(w_2)Q_\psi(w_3)\cdot H_\psi(0,\ldots ,0) \ .
\end{equation*}
But Lemma \ref{lem:H} shows that the right hand side of the above equation equals $\psi(\mathbb P(\underline{\mathbb C}^{k}))$ and Claim \ref{claim2:thm:multiplic} follows, since the projectivization of the trivial bundle $\underline{\mathbb C}^{k}$ is nothing but the product $B\times \mathbb CP^{k-1}$.
\end{proof}

It now follows from Claim \ref{claim:thm:multiplic} and \ref{claim2:thm:multiplic} that $\psi$ induces a map
\[
\mathbb Q[\gamma_1,\ldots ,\gamma_4]\rightarrow \mathbb Q[q_1,\ldots ,q_4] \ .
\]
By Lemma \ref{lem:phi=surj}, this map is an isomorphism, since $\gamma_m=\mathbb CP^m$ holds for $m\leq 4$.
The injectivity of this map implies 
\begin{equation} \label{eq:kerpsi=gamma}
\ker(\psi)=\left\langle \gamma_5,\gamma_6, \ldots \right\rangle \subseteq I \otimes \mathbb Q \ .
\end{equation}
Together with Claim \ref{claim2:thm:multiplic} this shows $\ker(\psi) = I\otimes \mathbb Q$, which finishes the proof of the theorem. 
\end{proof}

\section{Dualization invariant Chern numbers} \label{sec:dualinv_chern_numbers}
By Theorem \ref{thm:milnor}, the $\mathbb Q$-linear combinations of Chern numbers in complex dimension $n$ form the dual space of $\Omega_{2n}^U\otimes \mathbb Q$.
In this section we use Theorem \ref{thm1} in order to study those linear combinations of Chern numbers which are dualization invariant, i.e. for all complex vector bundles $E$ over some stably almost complex base the value of this linear combination on $\mathbb P(E)$ coincides with the value on $\mathbb P(E^\ast)$.
Let us therefore consider the ideal $\mathcal I^U$ (Definition \ref{def:IU}) generated by differences $\mathbb P(E)-\mathbb P(E^\ast)$ and denote its degree $2n$ part by $\mathcal I^U_{2n}$.
Then the $\mathbb Q$-vector space of dualization invariant linear combinations of Chern numbers in complex dimension $n$ is isomorphic to the dual space of $\left(\Omega_{2n}^U / \mathcal I^U_{2n}\right)\otimes \mathbb Q$.
By Theorem \ref{thm1}, this quotient is isomorphic to the dual space of the degree $2n$ part of $\mathbb Q[q_1,q_2,q_3,q_4]$, where $q_i$ has degree $2i$.
The isomorphism is induced by the $\psi$-genus which can uniquely be written in the form 
\begin{equation*} \label{eq:lambdas}
\psi=\sum_{i_1,\ldots ,i_4\geq 0} \lambda_{i_1,i_2,i_3,i_4}\cdot q_1^{i_1}q_2^{i_2}q_3^{i_3}q_4^{i_4} \ ,
\end{equation*}
where the $\lambda_{i_1,i_2,i_3,i_4}$'s are linear combinations of Chern numbers in complex dimension 
\[i_1+2i_2+3i_3+4i_4
\]
and we observe that these coefficients of $\psi$ form a basis of the vector space of dualization invariant linear combinations of Chern numbers.

In the remaining section we want to derive some concrete examples of pure Chern numbers which are dualization invariant.
By the above discussion, a Chern number in complex dimension $n$ is dualization invariant if and only if it vanishes on $\mathcal I^U_{2n} \otimes \mathbb Q$.  
By (\ref{eq:kerpsi}), the graded ideal $\mathcal I^U_{\ast} \otimes \mathbb Q$ is generated by one element in each even degree $\geq 10$. 
Therefore, in complex dimension $\leq 4$ every Chern number is dualization invariant.
As an example in complex dimension $5$, let us consider the Chern numbers of the projective tangent and cotangent bundle of $\mathbb CP^3$ in Table \ref{table:cp3}, calculated by D. Kotschick and S. Terzi\'c in \cite{kotschick_terzic}.

\begin{table}[!ht] 
\begin{center}
\begin{tabular}{|l|c|c|c|c|c|c|c|}
  \hline 
   \parbox[0pt][1.5em][c]{0cm}{}								&  $c_1^5$	& $c_1^3c_2$ 		& $c_1c_2^2$ 		& $c_1^2c_3$ 		& $c_2c_3$ & $c_1c_4$ 		& $c_5$ \\ 
  \hline\hline
  \parbox[0pt][1.5em][c]{0cm}{} $\mathbb P(T\mathbb CP^3)$				& $4500$		&	$2148$				&	$1028$				&	$612$					& $292$			& $108$				& $12$ \\
  \hline
  \parbox[0pt][1.5em][c]{0cm}{} $\mathbb P(T^{\ast}\mathbb CP^3)$   & $4860$		&	$2268$				&	$1068$				&	$612$					& $292$			& $108$				& $12$ \\
  \hline
\end{tabular}
\end{center}
\caption{The Chern numbers of $\mathbb P(T\mathbb CP^3)$ and $\mathbb P(T^{\ast}\mathbb CP^3)$, \cite{kotschick_terzic}.} \label{table:cp3}
\end{table}

Since $\mathcal I^U_{10}\otimes \mathbb Q$ is one dimensional, Table \ref{table:cp3} implies that in complex dimension $5$ every difference $\mathbb P(E)-\mathbb P(E^\ast)$ is a multiple of $\mathbb P(T\mathbb CP^3)-\mathbb P(T^\ast\mathbb CP^3)$.
Thus, the Chern numbers $c_5$, $c_1c_4$, $c_1^2c_3$ and $c_2c_3$ are dualization invariant.
This is true in a greater generality:

\begin{proposition} \label{prop:dualinv_chern_num}
In complex dimension $n$, the Chern numbers $c_n$, $c_1c_{n-1}$, $c_1^2c_{n-2}$ and $c_2c_{n-2}$ are dualization invariant.
\end{proposition}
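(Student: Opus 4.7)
The approach combines a diffeomorphism argument for $c_n$ with an explicit pushforward calculation for the remaining three Chern numbers. For $c_n$, this equals the Euler class of $T\mathbb P(E)$, so $\int_{\mathbb P(E)} c_n = \chi(\mathbb P(E))$; a Hermitian metric on $E$ induces a diffeomorphism $\mathbb P(E) \cong \mathbb P(E^*)$ (as recalled in the introduction), giving $\chi(\mathbb P(E)) = \chi(\mathbb P(E^*))$.

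For the other three, I would exploit the decomposition $c_m(\mathbb P(E)) = \sum_{p+q=m} c_p(B) c_q(T\pi)$ arising from $T\mathbb P(E) = \pi^{\ast}TB \oplus T\pi$, combined with the pushforward formula $\pi_* c_q(T\pi) = k\cdot \delta_{q, k-1}$. The latter follows from the Leray-Hirsch relation $c_k(T\pi) = 0$ together with the fact that $c_{k-1}(T\pi)$ is the Euler class of the vertical tangent bundle, whose fiber integral equals $\chi(\mathbb CP^{k-1}) = k$. Applying Lemma \ref{lem:alpha} to the expansion $c_q(T\pi) = \sum_r \binom{k-r}{q-r} y^{q-r} c_r(E)$ via $\pi_* y^l = s_{l-k+1}(E)$, one explicitly computes the required pushforwards $\pi_*\bigl((c_1(T\pi))^{a_1}(c_2(T\pi))^{a_2} c_q(T\pi)\bigr)$ for $a_1 + 2a_2 \leq 2$; each is supported on a small window of $q$-values near $k-1$ and yields a polynomial in $c_*(B)$, $c_1(E)^2$ and $c_2(E)$.

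Plugging these into the expansions of $c_1 c_{n-1}$, $c_1^2 c_{n-2}$ and $c_2 c_{n-2}$, one sees that after pushforward the dependence on the Chern classes of $E$ enters only through $c_1(E)^2$ and $c_2(E)$. Since $c_j(E^*) = (-1)^j c_j(E)$, these combinations are invariant under dualization, and the proposition follows. The structural reason is a parity count: if $a$ denotes the degree (in the cohomological grading divided by two) of the $c_*(B)$-factor of a surviving term, then the remaining $x$-degree in the Chern roots of $E$ equals $d-a$, where $d = \dim_{\mathbb C} B$, and for our four Chern numbers the nonvanishing terms all have $a \equiv d \pmod 2$. The main obstacle is the computation of $\pi_*(c_1(T\pi)^2 c_q(T\pi))$: the parity claim for $c_1^2 c_{n-2}$ requires a delicate cancellation at $q=k-2$, which one verifies using $c_1(T\pi)^2 = 2 c_2(T\pi) + \sum_i (y+x_i)^2$.
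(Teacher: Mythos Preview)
Your approach is correct but genuinely different from the paper's. You work directly with an arbitrary bundle $E\to B$ and show, via explicit pushforward along $\pi:\mathbb P(E)\to B$, that each of the four Chern numbers depends on $E$ only through $c_1(E)^2$ and $c_2(E)$ (or not at all), which are invariant under $E\mapsto E^\ast$. The key cancellations you flag do occur: for instance $\pi_\ast\bigl(c_1(T\pi)\,c_{k-1}(T\pi)\bigr)=0$ and $\pi_\ast\bigl(c_1(T\pi)^2\,c_{k-2}(T\pi)\bigr)=0$, so that no odd-degree expression in the $c_i(E)$ survives.

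The paper instead exploits the structure of $\ker\psi$ established earlier. By Theorem~\ref{thm:multiplic} one has $\ker\psi=\langle \gamma_5,\gamma_6,\ldots\rangle$ with $\gamma_m=\mathbb P(E_m)-B\times\mathbb CP^{m-3}$, where $B=C^3$ is a product of elliptic curves and hence null-bordant. It therefore suffices to check the four Chern numbers on products $\mathbb P(E_m)\times M'$. Since $TB$ is complex-trivial, $c(\mathbb P(E_m)\times M')=c(T\pi)$ for a bundle $T\pi$ of rank $n-3$; thus $c_n=c_{n-1}=c_{n-2}=0$ on these generators and the claim follows in one line.

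What each buys: the paper's argument is very short but leans on the full machinery of Theorems~\ref{thm1} and~\ref{thm:multiplic} (in particular the identification $\mathcal I^U\otimes\mathbb Q=\ker\psi$ and the explicit Calabi--Yau generators). Your argument is computationally heavier but entirely self-contained---it never mentions $\psi$, elliptic functions, or any bordism-ring structure theorem, and would stand on its own in a paper that did not develop those tools.
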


\begin{proof}
We need to prove that the above Chern numbers vanish on the degree $2n$ part of $\mathcal I^U\otimes \mathbb Q=\ker(\psi)$.
We have seen in (\ref{eq:kerpsi=gamma}) that
\[
\ker(\psi)=\left\langle \gamma_5,\gamma_6,\ldots \right\rangle
\] holds.
Here $\gamma_m$ is a difference $\mathbb P(E_m)-B\times \mathbb CP^{m-3}$, where $B=C^3$ is a triple product of an elliptic curve $C$ and $E_m$ is a holomorphic rank $m-2$ bundle over $B$.
In $\Omega_\ast^U$ the manifold $B$ is zero, since its tangent bundle is complex trivial.
Hence, we may think of $\gamma_m$ as being equal to $\mathbb P(E_m)$ and it suffices to prove that for $m\geq 5$ the Chern numbers mentioned in the Proposition vanish on any  product $M:=\mathbb P(E_{m})\times M^\prime$, where $M^\prime$ is some stably almost complex manifold in real dimension $2(n-m)$.
By construction, $M$ is a fiber bundle $\pi:M\rightarrow B$ with fiber $\mathbb CP^{m-3}\times M^\prime$.
Therefore, the tangent bundle of $M$ splits into $\pi^\ast TB\oplus T\pi$, where $T\pi$ denotes the tangent bundle along the fibers.
Note that $\pi^\ast TB$ is complex trivial, which implies that the total Chern class of $M$ equals $c(T\pi)$.
However, the complex rank of $T\pi$ equals $n-3$, such that $c_i(M)=0$ follows for $i>n-3$ and we are done.
\end{proof}

\section{Relations between $\mathbb P(E)$, $\mathbb P(E^{\ast})$ and $B\times\mathbb CP^{k}$} \label{sec:thmabc}
In \cite{hirzcalabi} F. Hirzebruch showed that for the projective tangent and cotangent bundle of a Calabi-Yau $3$-fold $B$, the following identity holds in $\Omega_\ast^U$:
\[
\mathbb P(TB)+\mathbb P(T^\ast B)=2\cdot B\times \mathbb CP^2 \ .
\]
This observation motivates the question of detecting universal relations between the complex bordism classes $\mathbb P(E)$, $\mathbb P(E^{\ast})$ and $B\times\mathbb CP^{k}$.
More precisely, we would like to determine the ideal defined as follows:
\begin{definition}
Let $(a,b,c)$ be a nontrivial triple of integers. 
We then define $\mathcal I_{(a,b,c)}^U$ to be the ideal in $\Omega^U_{\ast}$ which is generated by linear combinations 
\begin{equation}  \label{eq:def:genid}
a\cdot \mathbb P(E)+b\cdot \mathbb P(E^{\ast})+c\cdot B\times\mathbb CP^{k} \ ,
\end{equation}
where $E$ and its dual bundle $E^\ast$ are complex rank $k+1$ vector bundles over some stably almost complex base $B$.
\end{definition}

Consider (\ref{eq:def:genid}) and choose the base $B$ to be a point. 
This shows that for all $k\geq0$ we have $(a+b+c)\cdot \mathbb CP^k \in \mathcal I_{(a,b,c)}^{U}$.
For $a+b+c\neq 0$ this implies $\mathcal I_{(a,b,c)}^{U}\otimes \mathbb Q=\Omega^U_{\ast}\otimes \mathbb Q$.
We will therefore restrict ourselves to the nontrivial case where $a+b+c=0$ holds.

Note that by definition, the ideal $\mathcal I^U$ from Definition \ref{def:IU} is nothing but $\mathcal I^U_{(1,-1,0)}$.
Therefore, the question of determining the ideal $\mathcal I_{(a,b,c)}^U$ is a generalization of the discussion in section \ref{sec:mainthm}.
Before we explain the result, we need the definition of the $\chi_y$-genus.
For a stably almost complex manifold $M$ with Chern roots $x_1,\ldots ,x_n$ it is given by  
\begin{equation} \label{def:chiy}
\chi_y(M)=\int_M\left(\prod_{i=1}^{n}x_i\cdot \frac{1+y\cdot e^{-x_i}}{1-e^{-x_i}}\right)\cdot t^n\ ,
\end{equation}
where $y$ has weight $0$ and $t$ is a variable of weight $2$ which ensures that $\chi_y$ is a graded $\mathbb Q$-algebra homomorphism to $\mathbb Q[y,t]$, see \cite[p. 61]{HirzManiModForms}.
It turns out that the images $s_1:=\chi_y(\mathbb CP^1)$ and $s_2:=\chi_y(\mathbb CP^2)$ are algebraically independent in $\mathbb Q[y,t]$ and in fact generate the image of the $\chi_y$-genus rationally. 
Therefore, the $\chi_y$-genus can be regarded as a surjective homomorphism of graded $\mathbb Q$-algebras:
\begin{equation} \label{def2:chiy}
\chi_y:\Omega_\ast^U\otimes \mathbb Q \rightarrow \mathbb Q[s_1,s_2] \ ,
\end{equation}
where $s_1=\chi_y(\mathbb CP^1)$ and $s_2=\chi_y(\mathbb CP^2)$ are formal variables in degrees $2$ and $4$. 

In this section, instead of the explicit definition (\ref{def:chiy}) of $\chi_y$, we will mainly use the fact that this genus is the universal one which is multiplicative in fiber bundles of stably almost complex manifolds with structure group a compact connected Lie group, see \cite[p. 64]{Hohn91}. 
Since the projectivization $\mathbb P(E)\rightarrow B$ of a complex rank $k$ vector bundle has structure group $PU(k,\mathbb C)$, this result can be applied to projectivizations over stably almost complex manifolds $B$: 
\[
\chi_y(\mathbb P(E))=\chi_y(\mathbb CP^{k-1})\cdot \chi_y(B) \ .
\]
This implies that $\chi_y$ vanishes on $\mathcal I^U_{(a,b,c)}$ whenever $a+b+c=0$ holds. 

In order to state this section's theorem, we denote the localization $\mathbb Z[1/n]$ of $\mathbb Z$ at a nontrivial element $n\in \mathbb Z$ by $\mathbb Z_{n}$.

\begin{theorem} \label{thm3}
Let $(a,b,c)$ be a nontrivial triple of integers with $a+b+c=0$. 
\begin{enumerate}
\item For $c=0$ the $\psi$-genus induces an isomorphism 
	\[
	\left(\Omega^U_{\ast}/\mathcal I_{(a,-a,0)}^U \right) \otimes \mathbb Z_{2a} \cong \ \mathbb Z_{2a} [q_1,q_2,q_3,q_4]\ .
	\]  \label{thm3:case1} 
\item For $c\neq0$ the $\chi_y$-genus induces an isomorphism 
	\[
	\left(\Omega^U_{\ast}/\mathcal I_{(a,b,c)}^U \right) \otimes \mathbb Z_{a+b}\cong \ \mathbb Z _{a+b} [s_1,s_2]\ .
	\] \label{thm3:case2}
\end{enumerate}
\end{theorem}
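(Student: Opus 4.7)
\textbf{Part (1)} follows from Theorem \ref{thm1} by a coefficient change. By definition every generator of $\mathcal I^U_{(a,-a,0)}$ has the form $a(\mathbb P(E)-\mathbb P(E^\ast))$, so as a subset of $\Omega^U_\ast$ the ideal equals $a\cdot\mathcal I^U$. After inverting $2a$ the constant $a$ becomes a unit, hence $\mathcal I^U_{(a,-a,0)}\otimes\mathbb Z_{2a}=\mathcal I^U\otimes\mathbb Z_{2a}$. Since $\mathbb Z_{2a}$ is a $\mathbb Z[1/2]$-algebra, tensoring the isomorphism of Theorem \ref{thm1} over $\mathbb Z[1/2]$ with $\mathbb Z_{2a}$ yields the claim.

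\textbf{Part (2)} proceeds in parallel to the proof of Theorem \ref{thm1}. First I would check that $\chi_y$ vanishes on $\mathcal I^U_{(a,b,c)}$: since projectivizations are fiber bundles with compact connected structure group $PU(r,\mathbb C)$ and $\chi_y$ is the universal multiplicative genus for such bundles, one has $\chi_y(\mathbb P(E))=\chi_y(B)\chi_y(\mathbb CP^{r-1})=\chi_y(\mathbb P(E^\ast))$, so every generator of the ideal evaluates to $(a+b+c)\chi_y(B)\chi_y(\mathbb CP^{r-1})=0$. The core step is then to construct a basis sequence $(\beta_m)_{m\geq 1}$ of $\Omega^U_\ast\otimes\mathbb Z_{a+b}$ with $\beta_1=\mathbb CP^1$, $\beta_2=\mathbb CP^2$, and $\beta_m\in\mathcal I^U_{(a,b,c)}\otimes\mathbb Z_{a+b}$ for all $m\geq 3$. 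Granted this, the quotient map $\Omega^U_\ast\otimes\mathbb Z_{a+b}\to\mathbb Z_{a+b}[s_1,s_2]$ factors through the polynomial ring $\mathbb Z_{a+b}[\mathbb CP^1,\mathbb CP^2]$ and induces the claimed isomorphism via $\mathbb CP^i\mapsto s_i$, which is bijective because $s_1,s_2$ are algebraically independent.

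For the construction I would take the ideal generators $\alpha_{(E,B)}:=a\mathbb P(E)+b\mathbb P(E^\ast)+c\cdot B\times\mathbb CP^{r-1}$ with $B$ of \emph{even} complex dimension $n\geq 2$ and rank $r\geq 2$, so that $m=n+r-1$. Lemma \ref{lem:milnornr} then gives $s_m(\mathbb P(E^\ast))=s_m(\mathbb P(E))$ (since $(-1)^n=1$), while the Thom-Milnor number of the product $B\times\mathbb CP^{r-1}$ vanishes as both factors have positive real dimension; consequently $s_m(\alpha_{(E,B)})=(a+b)\,s_m(\mathbb P(E))$. Dividing by the unit $a+b$ in $\mathbb Z_{a+b}$ produces elements of $\mathcal I^U_{(a,b,c)}\otimes\mathbb Z_{a+b}$ with Thom-Milnor number $s_m(\mathbb P(E))$, and $\mathbb Z_{a+b}$-linear combinations of these, as $E$ and $B$ vary, then furnish the $\beta_m$.

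The main obstacle is the combinatorial verification, in the spirit of Proposition \ref{prop:basisseqMU} and Claim \ref{claimgcd}, that varying $E\to B$ among the bundles of Examples \ref{ex:1} and \ref{ex:2} with $n$ even yields Thom-Milnor numbers generating the correct subgroup of $\mathbb Z_{a+b}$ required by Theorem \ref{thm:milnor}: for each prime $p\nmid a+b$ one must exhibit an example with $v_p(s_m(\mathbb P(E)))=1$ when $m+1=p^l$ and with $v_p=0$ otherwise. The restriction to $n$ even (needed to eliminate the coefficient $a-b$, which is generally not a unit in $\mathbb Z_{a+b}$) is more restrictive than the odd-$n$ setup of Proposition \ref{prop:basisseqMU}, so one brings in additional partitions from Example \ref{ex:2} at $n=4$; the underlying computations are binomial identities of the same type as in Proposition \ref{prop:basisseqMU}, and a parallel case analysis distinguishing prime-power and non-prime-power values of $m+1$ completes the step.
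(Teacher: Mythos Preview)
Your proposal matches the paper's proof in structure and substance: part (1) is reduced to Theorem~\ref{thm1} exactly as the paper does, and for part (2) you use multiplicativity of $\chi_y$ in projectivizations to get vanishing on the ideal, restrict to bases of even complex dimension $n$ so that the Thom--Milnor number of each generator is $(a+b)\,s_m(\mathbb P(E))$, and then run a binomial--gcd argument to produce the required basis sequence. One small simplification relative to your outline: the paper carries out the combinatorial step using only Example~\ref{ex:1} (the choices $(i_1,i_2)=(1,1)$ and the pairs $(i,i)$ versus $(i-1,i+1)$ already give $\gcd(m)\mid m+1$ and $\gcd(m)\mid\binom{m+1}{i+1}$ for all $1\le i<m/2$, and symmetry of binomial coefficients then covers every $\binom{m+1}{j}$ with $1\le j\le m$), so Example~\ref{ex:2} is not needed here.
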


\begin{proof}
If $c=0$, then $a=-b\neq 0$, such that $\mathcal I_{(a,b,c)}^U =a\cdot \mathcal I^U $ follows from the definitions and the first statement of Theorem \ref{thm3} is nothing but tensoring Theorem \ref{thm1} with $\mathbb Z_{2a}$.

To prove the second statement, fix some triple $(a,b,c)$ with $a+b+c=0$ and $c\neq 0$.
\begin{claim} \label{claim:thm3}
There is a sequence of ring generators $\epsilon_1,\epsilon_2,\ldots$ of $\Omega^U_{\ast}\otimes \mathbb Z _{a+b}$, such that $\epsilon_m\in \mathcal I_{(a,b,c)}^U$ holds for all $m\geq3$.
\end{claim}

\begin{proof}
Because of $s_m(\mathbb CP^{m})=m+1$, we may choose $\epsilon_m=\mathbb CP^m$ for $m=1,2$ by Theorem \ref{thm:milnor}. 
For $m>2$ we will construct $\epsilon_m$ to be a certain linear combination of elements $a\cdot \mathbb P(E)+b\cdot \mathbb P(E^\ast)+c\cdot B\times\mathbb CP^{k}$, where $B$ is a manifold in even complex dimension $n<m$.
Since $B\times \mathbb CP^k$ is a proper product, its Thom-Milnor number vanishes.
Moreover, Lemma \ref{lem:milnornr} yields $s_m(\mathbb P(E))=s_m(\mathbb P(E^\ast))$ and we obtain: 
\[s_m\left(a\cdot \mathbb P(E)+b\cdot \mathbb P(E^\ast)+c\cdot B\times\mathbb CP^{k}\right)=(a+b)\cdot s_m(\mathbb P(E)) \ .
\]
We define $\gcd(m)$ to be the greatest common divisor of all Thom-Milnor numbers $s_m(\mathbb P(E))$, where $\mathbb P(E)$ is a projectivization in real dimension $2m$ and the base $B$ is a manifold in even complex dimension $n<m$.
Then by Theorem \ref{thm:milnor}, in order to show that a basis sequence with the desired property exists, it is enough to show that for $m\geq 3$ the integer $\gcd(m)$ equals $1$ if $m+1$ is not a prime power and that it is equal to $p$ if $m+1$ is a power of the prime $p$.
By the calculations of Example \ref{ex:1} in subsection \ref{subsec:milnornr}, we obtain:

\begin{claim} \label{claim2:thm3}
The number $\gcd(m)$, defined above, is a divisor of
\begin{align} \label{eq:thm3:div1}
(-1)^{i_1}\binom{m-1}{i_1}+(-1)^{i_2}\binom{m-1}{i_2}+m-n-1 \ ,
\end{align}
where $n=i_1+i_2\leq m-1$ is even.
\end{claim}

As $m\geq 3$, we may choose $i_1=i_2=1$ in Claim \ref{claim2:thm3} so that the integer in (\ref{eq:thm3:div1}) equals $-2(m-1)+m-3=-(m+1)$ and we see that
\begin{align} \label{eq:thm3:div2}
\gcd(m)\ \ \text{divides}\ \ m+1 \ .
\end{align}
For any natural number $1\leq i< m/2$ consider the two integers in (\ref{eq:thm3:div1}), where $(i_1,i_2)$ is one of the tuples $(i,i)$ or $(i-1,i+1)$ and $n=2i$ holds.
Then by Claim \ref{claim2:thm3}, subtraction of both integers and multiplying the result with $(-1)^i$ shows that $\gcd(m)$ is a divisor of
\begin{align*}
\binom{m-1}{i}+\binom{m-1}{i}+\binom{m-1}{i-1}+\binom{m-1}{i+1} \ .
\end{align*}
Using the formula $\binom{n-1}{k-1}+\binom{n-1}{k}=\binom{n}{k}$ twice shows that
\begin{align} \label{eq:thm3:div3}
\gcd(m)\ \ \text{divides}\ \ \binom{m+1}{i+1} \ \ \text{for all}\ \ 1\leq i< m/2\ .
\end{align}
Now (\ref{eq:thm3:div2}) and (\ref{eq:thm3:div3}) together with the symmetry of the binomial coefficients show that $\gcd(m)$ is a divisor of $\binom{m+1}{j}$ for all $1\leq j\leq m$.
Suppose that $p$ is a prime divisor of $\gcd(m)$ and write $m+1=p^s\cdot r$ for some integer $r$ not divisible by $p$.
Then $\binom{p^s\cdot r}{p^s}$ is not divisible by $p$, so that $r=1$ follows.
Moreover, $\binom{p^s}{p^{s-1}}$ is not divisible by $p^2$, which finally shows that $\gcd(m)=p$ holds, as desired.
This completes the proof of Claim \ref{claim:thm3}.
\end{proof}

Since $a+b+c=0$ holds, it follows from the multiplicativity of the $\chi_y$-genus in $\mathbb CP^k$-fiber bundles 
that it vanishes on $\mathcal I^U_{(a,b,c)}\otimes \mathbb Z_{a+b}$.
Therefore, it follows from Claim \ref{claim:thm3} that $\chi_y$ induces a map on the quotient
\[\mathbb Z_{a+b}[\epsilon_1,\epsilon_2]\rightarrow \mathbb Z_{a+b} [s_1,s_2] \ .
\]
This is an isomorphism, since $\chi_y$ maps $\epsilon_1=\mathbb CP^1$ and $\epsilon_2=\mathbb CP^2$ to $s_1$ and $s_2$.
Thus: $\ker(\chi_y)=\left\langle \epsilon_3,\epsilon_4,\ldots \right\rangle \subseteq \mathcal I^U_{(a,b,c)}\otimes \mathbb Z_{a+b}$.
Since $\chi_y$ vanishes on $\mathcal I^U_{(a,b,c)}\otimes \mathbb Z_{a+b}$, we obtain $\ker(\chi_y)= \mathcal I^U_{(a,b,c)}\otimes \mathbb Z_{a+b}$, so that the second statement of Theorem \ref{thm3} follows.
\end{proof}

In the introduction we defined the ideal $\mathcal M^U $ in $\Omega^U_{\ast}$ to be generated by differences $E-B\cdot F$, where $F\rightarrow E\rightarrow B$ ranges over all fiber bundles of stably almost complex manifolds with structure group a compact connected Lie group.
As explained there, it was shown in the 1970's that the $\chi_y$-genus induces an isomorphism $ \left(\Omega^U_{\ast}/\mathcal M^U \right) \otimes\mathbb Q \cong \mathbb Q [s_1,s_2]$.
We are now able to show that this is also true with integral coefficients:

\begin{corollary} \label{cor:chiy}
The $\chi_y$-genus induces the following isomorphism of graded rings:
\[
 \Omega^U_{\ast}/\mathcal M^U  \cong \mathbb Z [s_1,s_2]\ .
\]
\end{corollary}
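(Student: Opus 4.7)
The plan is to deduce the corollary directly from part (2) of Theorem \ref{thm3} by choosing a triple $(a,b,c)$ for which the localization $\mathbb Z_{a+b}$ is already all of $\mathbb Z$, combined with the elementary observation that $\mathcal I^U_{(a,b,c)}\subseteq\mathcal M^U$ whenever $a+b+c=0$.

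First I would note that the $\chi_y$-genus vanishes on $\mathcal M^U$: by definition each generator of $\mathcal M^U$ is a difference $E-B\cdot F$ coming from a fiber bundle $F\to E\to B$ with compact connected structure group, and on such classes $\chi_y$ is multiplicative. Together with $\chi_y(\mathbb CP^1)=s_1$ and $\chi_y(\mathbb CP^2)=s_2$, this yields a surjective ring homomorphism
\[
\bar\chi_y\colon\Omega^U_\ast/\mathcal M^U\twoheadrightarrow\mathbb Z[s_1,s_2],
\]
and it remains only to verify the injectivity of $\bar\chi_y$.

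For this I would apply Theorem \ref{thm3}(2) to the triple $(a,b,c)=(1,0,-1)$: it is a nontrivial integer triple satisfying $a+b+c=0$ and $c\neq 0$, and the coefficient ring $\mathbb Z_{a+b}=\mathbb Z[1/1]$ is simply $\mathbb Z$, so the theorem provides an integral isomorphism $\Omega^U_\ast/\mathcal I^U_{(1,0,-1)}\cong\mathbb Z[s_1,s_2]$ induced by $\chi_y$. The generators $\mathbb P(E)-B\times\mathbb CP^k$ of $\mathcal I^U_{(1,0,-1)}$ are manifestly of the form $E_0-B\cdot F$ with $E_0=\mathbb P(E)$ the total space of a projective bundle over $B$ and $F=\mathbb CP^k$ its fiber, whose structure group is compact and connected; hence $\mathcal I^U_{(1,0,-1)}\subseteq\mathcal M^U$.

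Putting these observations together, there is a factorization
\[
\Omega^U_\ast/\mathcal I^U_{(1,0,-1)}\twoheadrightarrow\Omega^U_\ast/\mathcal M^U\overset{\bar\chi_y}{\twoheadrightarrow}\mathbb Z[s_1,s_2]
\]
whose composition is an isomorphism by the previous step, which forces both surjections to be isomorphisms and proves the corollary. No serious obstacle arises; the only point of care is to confirm that the paper's convention on $\mathbb Z_n$ permits $n=\pm1$. If not, one can instead apply Theorem \ref{thm3}(2) to two coprime triples, e.g.\ $(1,1,-2)$ and $(1,2,-3)$, obtaining isomorphisms after tensoring with $\mathbb Z[1/2]$ and $\mathbb Z[1/3]$ respectively, and then conclude integrally via a standard B\'ezout argument applied to the surjection $\bar\chi_y$.
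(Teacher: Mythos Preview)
Your proof is correct and follows essentially the same route as the paper: the paper also applies Theorem~\ref{thm3}(2) to the triple $(a,b,c)=(1,0,-1)$, observes $\mathcal I^U_{(1,0,-1)}\subseteq\mathcal M^U$, and combines this with the vanishing of $\chi_y$ on $\mathcal M^U$ to conclude $\ker(\chi_y)=\mathcal I^U_{(1,0,-1)}=\mathcal M^U$. Your caution about $\mathbb Z_1$ is unnecessary---the paper's convention (``nontrivial'' meaning nonzero) gives $\mathbb Z_1=\mathbb Z$, and indeed the paper's own proof uses exactly this triple---so the alternative B\'ezout argument can be dropped.
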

\begin{proof}
Note that the complex bordism ring is torsion free and therefore a subring of the rational one. 
Thus, since $\chi_y$ vanishes on $\mathcal M^U\otimes \mathbb Q$, it also vanishes on $\mathcal M^U$.
Let us now think of $\chi_y$ restricted to $\Omega^U_{\ast}$.
Then, choosing $(a,b,c)=(1,0,-1)$ in Theorem \ref{thm3} shows that the $\chi_y$-genus induces an isomorphism
\[
 \Omega^U_{\ast}/\mathcal I_{(1,0,-1)}^U  \cong \mathbb Z [s_1,s_2]\ .
\]
This shows that the kernel of the $\chi_y$-genus equals $\mathcal I_{(1,0,-1)}^U $, which by definition is a subideal of $\mathcal M^U$.
Therefore, since $\chi_y$ vanishes on $\mathcal M^U$, we deduce $\mathcal I_{(1,0,-1)}^U =\mathcal M^U$ which proves the Corollary.
\end{proof}

\begin{remark} \label{rem:totaro}
Corollary \ref{cor:chiy} says that on $\Omega_\ast^U$ the $\chi_y$-genus is a surjective ring homomorphism $\chi_y:\Omega_\ast^U \rightarrow \mathbb Z[s_1,s_2]$ with kernel $\mathcal M^U$. 
This shows that a remark of B. Totaro in \cite[p. 777]{totaro} in which it is claimed that the image of the $\chi_y$-genus on $\Omega_\ast^U$ is not finitely generated, is misstated. 
What was meant there was the twisted $\chi_y$-genus, rather than the usual one.
\end{remark}

\section{Krichever-Höhn's complex elliptic genus} \label{sec:KHgenus}

For some integral $\mathbb Q$-algebra $R$ and a quadruple $\vec q\in R^4$, the $\psi$-genus of Definition \ref{def:phiell} 
induces an $R$-valued genus $\psi(\vec q)$ via the logarithm
\[
g_{\psi(\vec q)}(y)=\int_0^y \frac{dt} {\sqrt{1+q_1t+q_2t^2+q_3t^3+q_4t^4}} \ .
\]
Analyzing the Taylor expansion of the above integrand shows that $g_{\psi(\vec q)}$ is an odd power series if and only if $q_1=q_3=0$ holds.
This means that $\psi(\vec q)$ is well defined for oriented manifolds if and only if $q_1$ and $q_3$ vanish.
If in this case additionally $q_2$ and $q_4$ are algebraically independent, then $\psi(\vec q)$ is equivalent to Ochanine's elliptic genus $\varphi_{Oc}: \Omega_\ast^{SO}\otimes \mathbb Q\rightarrow \mathbb Q[\delta,\epsilon]$, see (\ref{eq:ochgen}).
In this sense we may call our $\psi$-genus a complex version of Ochanine's elliptic genus $\varphi_{Oc}$.
Surprisingly, Krichever-Höhn's complex elliptic genus $\varphi_{KH}$, studied in \cite{Hohn91}, \cite{krichever} and \cite{totaro}, is also a complex version of $\varphi_{Oc}$ and this section's aim is to compare $\psi$ with $\varphi_{KH}$.

Krichever-Höhn's complex elliptic genus is a graded $\mathbb Q$-algebra homomorphism
\[
\varphi_{KH}: \Omega^U_{\ast}\otimes \mathbb Q \rightarrow \mathbb Q[p_1,p_2,p_3,p_4]\ ,
\] 
where $p_1$ up to $p_4$ are formal variables in degrees $2$, $4$, $6$ and $8$.
Its characteristic power series $Q_{KH}(x)=x\cdot h_{KH}(x)$ is uniquely determined by the condition that $r(x):=-h_{KH}^\prime(x)/h_{KH}(x)$ satisfies, see \cite{Hohn91}:
\begin{align} \label{eq:DGLforg}
r^{\prime}(x)^2=r(x)^4+p_1 r(x)^3+p_2r(x)^2+p_3r(x)+p_4 \ .
\end{align}
For an arbitrary quadruple $\vec p =(p_1,p_2,p_3,p_4)$ of an integral $\mathbb Q$-algebra $R$, 
we denote the associated genus by $\varphi_{KH}(\vec p)$.
G. Höhn showed that $\varphi_{KH}(\vec p)$  is equivalent to, see \cite[pp. 39, 44, and 64]{Hohn91}:
\begin{enumerate}
\item Ochanine's elliptic genus if and only if $p_1,p_3$ vanish and $p_2,p_4$ are algebraically independent.
\item the $\chi_y$-genus if and only if $p_3,p_4$ vanish and $p_1,p_2$ are algebraically independent.
\end{enumerate}
Therefore, the $\chi_y$-genus and Ochanine's elliptic genus $\varphi_{Oc}$ are genera which factor through $\varphi_{KH}$.
This is also true for $\psi$:
By definition of $\psi$ it is clear that $\varphi_{Oc}$ factors through $\psi$.
Moreover, since the $\chi_y$-genus is multiplicative in projectivizations $\mathbb P(E)$ of complex vector bundles $E$ , it is dualization invariant and therefore factors through $\psi$ by Theorem \ref{thm1'}.

This section's result is that $\varphi_{Oc}$ and $\chi_y$ are basically the only genera which factor through both, $\varphi_{KH}$ as well as $\psi$:

\begin{proposition} \label{prop:dualinvgenneqHöhngen}
Let $R$ be an integral $\mathbb Q$-algebra and $\varphi$ an $R$-valued genus which factors through both $\psi$ and $\varphi_{KH}$.
Then $\varphi$ already factors through $\chi_y$ or $\varphi_{Oc}$.
\end{proposition}

An immediate consequence of this statement is the following:
\begin{corollary}
The genera $\psi$ and $\varphi_{KH}$ are genuinely different.
\end{corollary}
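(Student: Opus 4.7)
The plan is to work directly with the characteristic power series $Q_\varphi(x) = x \cdot h(x)$, where $h(x) = 1/x + O(1)$. Since $\varphi$ factors through $\psi$, there exist $q_1,\ldots,q_4 \in R$ such that $h$ satisfies (\ref{eq:DGLforh}), that is, $(h')^2 = P(h)$ with $P(t) = t^4 + q_1 t^3 + q_2 t^2 + q_3 t + q_4$; since $\varphi$ also factors through $\varphi_{KH}$, there exist $p_1,\ldots,p_4 \in R$ such that $r := -h'/h$ satisfies (\ref{eq:DGLforg}). The goal is to deduce that either $q_1 = q_3 = 0$, in which case the $\psi$-ODE degenerates to the Ochanine ODE $(h')^2 = h^4 - 2\delta h^2 + \epsilon$ for $\delta = -q_2/2$, $\epsilon = q_4$ and so $\varphi = \psi(\vec q)$ specializes to $\varphi_{Oc}$, or $p_3 = p_4 = 0$, in which case $\varphi = \varphi_{KH}(\vec p)$ specializes to $\chi_y$ by the H\"ohn characterization cited in the excerpt.

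The first step is to read off from the $\psi$-equation the auxiliary identities $r^2 = P(h)/h^2 = h^2 + q_1 h + q_2 + q_3 h^{-1} + q_4 h^{-2}$ and, by differentiating the $\psi$-ODE and dividing by $h'$, $r' = -h^2 - q_1 h/2 + q_3/(2h) + q_4/h^2$. Plugging these into the $\varphi_{KH}$-equation and separating the $r$-even part from the $r$-odd part (using $r^4 = (r^2)^2$, $r^3 = r \cdot r^2$) rewrites it as a single identity $A(h) = r \cdot B(h)$ in $K((x))$, where $K := \mathrm{Frac}(R)$ and $A(h), B(h) \in K[h, h^{-1}]$ are explicit Laurent polynomials with coefficients in $R[q_i, p_i]$.

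The crux is a case distinction on whether the quartic $P(t)$ is a square in $K[t]$. In the generic case $P$ is not a square, so $r$ is algebraic of degree exactly $2$ over $K(h)$, and the identity forces $A(h) = 0$ and $B(h) = 0$ separately. Comparing the coefficients of $h^2$ and $h^0$ in $B(h) = 0$ gives $p_1 = p_3 = 0$, and comparing the coefficients of $h^3$ and $h$ in $A(h) = 0$ gives $q_1 = q_3 = 0$ (the remaining coefficients furnish only the consistency relations $p_2 = -2q_2$ and $p_4 = q_2^2 - 4q_4$). In the remaining case $P(t) = (t^2 + a t + b)^2$ for some $a, b \in K$, and the condition $h(x) = 1/x + O(1)$ fixes the sign in the Riccati equation $h' = -(h^2 + a h + b)$. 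Then $r = h + a + b/h$ is a rational function of $h$, and a direct computation yields $r' = -(h - b/h)\, r$, so that, using $h + b/h = r - a$,
\[
(r')^2 = (h - b/h)^2\, r^2 = \bigl((r-a)^2 - 4b\bigr)\, r^2 = r^4 - 2a\, r^3 + (a^2 - 4b)\, r^2,
\]
and matching with the $\varphi_{KH}$-ODE yields $p_1 = -2a$, $p_2 = a^2 - 4b$ and $p_3 = p_4 = 0$.

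I expect the main obstacle to be the generic case, specifically the justification that $r$ has degree exactly $2$ over $K(h)$ so that the split $A = B = 0$ is legitimate; this requires exploiting that $P$ is not a perfect square in $K[t]$ and then carefully tracking the $r$-even versus $r$-odd decomposition while expanding the Laurent polynomials $(r')^2$ and $(r^2)^2$. Precisely this dichotomy---whether $P$ is a square or not---is what separates the $\chi_y$ case from the Ochanine case.
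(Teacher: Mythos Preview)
Your proposal is really a proof of the Proposition immediately preceding the Corollary, from which the Corollary itself follows at once (and the paper likewise records it as an ``immediate consequence'' without further argument: if $\psi$ and $\varphi_{KH}$ were equivalent, the common genus would factor through $\chi_y$ or $\varphi_{Oc}$, contradicting the fact that $\psi$ surjects onto a polynomial ring in four variables). Your argument for the Proposition is correct; the key step---that $h$ is transcendental over $K=\mathrm{Frac}(R)$ (clear from $v_x(h)=-1$) and hence $r$ has degree exactly $2$ over $K(h)$ whenever $P(t)$ is not a square in $K[t]$---does justify the split $A=B=0$, and the coefficient readings and the Riccati computation in the square case are both straightforward to verify.

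Your route, however, is genuinely different from the paper's. The paper first computes $\varphi_{KH}(\mathbb{CP}^i)$ for $i\le 4$ to express the $q_i$ in terms of the $p_i$, then squares the identity $(r')^2 - r^4 - p_2 r^2 - p_4 = p_1 r^3 + p_3 r$ to eliminate odd powers of $r$, expands the result as a Laurent polynomial $\sum_j d_j\, h^j$, and extracts three particular coefficients $d_{-4}, d_1, d_2$ via an ``elementary but tedious calculation''; its case split is on whether $p_1$ vanishes. You instead separate the $\varphi_{KH}$-equation directly into its $r$-even and $r$-odd parts $A(h) = r\cdot B(h)$ and invoke the field-theoretic fact above to force $A=B=0$; when $P$ is a perfect square you solve the resulting Riccati equation $h'=-(h^2+ah+b)$ by hand and read off $p_3=p_4=0$. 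Your dichotomy (square versus non-square $P$) is more intrinsic than the paper's ($p_1=0$ versus $p_1\neq 0$), and your argument bypasses both the preliminary $\mathbb{CP}^i$ computations and the heavy coefficient extraction. The paper's approach, by contrast, stays entirely within explicit power-series manipulations and does not require passing to a fraction field or reasoning about algebraic degree.
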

Before we prove the Proposition, we need to calculate the values of $\varphi_{KH}$ on some complex projective spaces.

\begin{lemma}  \label{lem:phi_KH(cpn)}
For Krichever-Höhn's complex elliptic genus $\varphi_{KH}$
, the following holds:
\begin{align*}
\varphi_{KH}(\mathbb CP^1)&=\frac{1}{2}p_1 \ , \\
\varphi_{KH}(\mathbb CP^2)&=\frac{3}{16}p_1^2+\frac{1}{4}p_2 \ , \\
\varphi_{KH}(\mathbb CP^3)&=\frac{1}{48}\left(3p_1^3+12p_1p_2+8p_3\right) \ , \\
\varphi_{KH}(\mathbb CP^4)&=\frac{1}{768}\left(15p_1^4+120p_1^2p_2+48p_2^2+176p_1p_3+ 96p_4\right) \ .
\end{align*}
\end{lemma}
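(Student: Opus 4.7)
The plan is to reduce the computation of $\varphi_{KH}(\mathbb{CP}^m)$ to coefficient extraction from the characteristic power series, and then to determine $Q_{KH}$ up to order $4$ by solving the ODE for $r(x)$ term by term. Recall that the total Chern class of $T\mathbb{CP}^m$ equals $(1+x)^{m+1}$ where $x$ is the hyperplane class and $\int_{\mathbb{CP}^m} x^m = 1$, so Hirzebruch's formula (\ref{phi=int}) collapses to
\[
\varphi_{KH}(\mathbb{CP}^m) = [x^m] \, Q_{KH}(x)^{m+1}.
\]
Hence everything reduces to knowing $Q_{KH}(x) = 1 + O(x)$ modulo $x^5$.

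Next I would extract $r(x)$ from the defining ODE (\ref{eq:DGLforg}). Writing $r(x) = 1/x + c_0 + c_1 x + c_2 x^2 + c_3 x^3 + \cdots$ and clearing denominators by setting $s(x) := xr(x) - 1$, so that $s$ is a genuine power series vanishing at $0$, the ODE becomes the polynomial identity
\[
\bigl(x s'(x) - 1 - s(x)\bigr)^2 = (1+s(x))^4 + p_1 x (1+s(x))^3 + p_2 x^2 (1+s(x))^2 + p_3 x^3 (1+s(x)) + p_4 x^4.
\]
Matching coefficients at $x^1, x^2, x^3, x^4$ yields the four relations that determine $c_0, c_1, c_2, c_3$ as weighted homogeneous polynomials in $p_1, p_2, p_3, p_4$; in particular the first match gives $c_0 = -p_1/4$, which recovers the translation appearing in Lemma \ref{sol=ellfun} and serves as a sanity check.

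To pass from $r$ to the characteristic series, I use the relation $r = -(\log h_{KH})'$ together with the normalization $h_{KH}(x) = 1/x + O(1)$, which integrates to
\[
Q_{KH}(x) = x \, h_{KH}(x) = \exp\!\Bigl(-c_0 x - \tfrac{c_1}{2} x^2 - \tfrac{c_2}{3} x^3 - \tfrac{c_3}{4} x^4 - \cdots\Bigr).
\]
Expanding the exponential up to order $x^4$ gives $Q_{KH}(x)$ as a polynomial of degree $4$ in $x$ with weighted polynomial coefficients in the $p_i$. Finally, for each $m = 1,2,3,4$ I compute the $m$-th coefficient of $Q_{KH}(x)^{m+1}$ and simplify.

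The main obstacle is the bookkeeping for $m = 4$: it requires the coefficient $c_3$, which is determined by a relation in which the quartic $(1+s)^4$ and the squared left-hand side both contribute cross terms involving all of $c_0, c_1, c_2$ multiplied together, and then $Q_{KH}(x)^5$ must be expanded to order $x^4$ keeping every product $p_i p_j$ with $i+j \leq 4$. No conceptual step is hidden in these expansions, but the verification that the four displayed formulas come out with the stated integer coefficients is where all the work lies.
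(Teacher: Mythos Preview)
Your approach is correct and essentially the same as the paper's: both reduce $\varphi_{KH}(\mathbb{CP}^m)$ to the coefficient of $x^m$ in $Q_{KH}(x)^{m+1}$ and then expand. The only difference is that the paper quotes the first four coefficients $b_1=p_1/4$, $b_2=p_2/12$, $b_3=p_3/24$, $b_4=(-p_2^2+3p_1p_3+18p_4)/720$ of $Q_{KH}$ directly from H\"ohn's thesis, whereas you derive them yourself from the ODE \eqref{eq:DGLforg} via the substitution $s(x)=xr(x)-1$ and the integration $Q_{KH}=\exp(-\!\int(r-1/x))$; your computation of $c_0=-p_1/4$, $c_1=p_1^2/16-p_2/6$, etc.\ indeed reproduces those $b_i$.
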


\begin{proof}
From \cite{Hohn91} we deduce the following explicit formulas for the first four coefficients of the characteristic power series $Q_{KH}(x)=1+b_1x+b_2x^2+\cdots$ :
\begin{align*}
b_1=\frac{1}{4}p_1 \ \ , \ \ 
b_2=\frac{1}{12}p_2 \ \ , \ \ 
b_3=\frac{1}{24}p_3 \ \ , \ \ 
b_4=\frac{1}{720}\left(-p_2^2+3p_1p_3+18p_4 \right)\ .
\end{align*}
The total Chern class of $\mathbb CP^n$ equals $(1+x)^{n+1}$, where $x\in H^2(\mathbb CP^n)$ is a positive generator of the cohomology ring. 
Therefore, $\varphi_{KH}(\mathbb CP^n)$ equals the coefficient of $x^n$ in $\left(Q_{KH}(x)\right)^{n+1}$. 
Now an elementary calculation yields the stated result.
\end{proof}

\begin{proof}[Proof of Proposition \ref{prop:dualinvgenneqHöhngen}]
The assumptions in the Proposition precisely mean that there 
are quadruples $\vec q=(q_1,q_2,q_3,q_4)$ and $\vec p=(p_1,p_2,p_3,p_4)$ in $R$, such that the associated genera $\psi(\vec q)$ and $\varphi_{KH}(\vec p)$ both coincide with the genus $\varphi$.
At the beginning of this section we explained that $\varphi_{KH}(\vec p)$ factorizes through $\chi_y$ resp. $\varphi_{Oc}$ if and only if $p_3=0$ and $p_4=0$ resp. $p_1=0$ and $p_3=0$ holds.
Therefore, it remains to show that the $p_i$'s satisfy one of these two conditions.

First of all the values of $\psi(\vec q)$ and $\varphi_{KH}(\vec p)$ on the complex projective spaces in dimensions $\leq 4$ must coincide, such that (\ref{psi(CP^1)})-(\ref{psi(CP^4)}) together with Lemma \ref{lem:phi_KH(cpn)} yield a concrete relation between the $p_i$'s and $q_i$'s:
\begin{align*}
q_1&=-p_1 \ , \\
q_2&=\frac{1}{8}\left(3p_1^2-4p_2 \right) \ , \\
q_3&=\frac{1}{48}\left(-3p_1^3+12p_1p_2-16p_3  \right)\ , \\
q_4&=\frac{1}{768}\left(3p_1^4-24p_1^2p_2+32p_1p_3+48p_2^2-192p_4 \right)\ .
\end{align*}
Moreover, the characteristic power series $Q_{\psi}(x)=x\cdot h_{\psi}(x)$ and $Q_{KH}(x)=x\cdot h_{KH}(x)$ must coincide and we may write $h(x):=h_{\psi}(x)=h_{KH}(x)$.
Hence, $r(x):=-h^\prime(x)/h(x)$ is a solution of (\ref{eq:DGLforg}), where in addition $h(x)$ is a solution of (\ref{eq:DGLforh}):
\begin{align} \label{eq:DGLforh2} 
h^{\prime}(x)^2=h(x)^4+q_1h(x)^3+q_2h(x)^2+q_3h(x)+q_4 \ .
\end{align}
This equation yields for $r(x)=-h^\prime(x)/h(x)$:
\begin{align} \label{eq:DGLforh3}
r(x)^2=h(x)^2+q_1h(x)+q_2+q_3h(x)^{-1}+q_4h(x)^{-2} \ .
\end{align}
We would like to put this into the right hand side of (\ref{eq:DGLforg}).
Therefore, a little manipulation of (\ref{eq:DGLforg}) is necessary, since we need to get rid of all odd powers of $r(x)$ in (\ref{eq:DGLforg}).
Indeed, we will use that (\ref{eq:DGLforg}) implies: 
\begin{equation} \label{eq:werwer}
\left(r^{\prime}(x)^2-r(x)^4-p_2r(x)^2-p_4\right)^2=\left(p_1r(x)^3+p_3r(x)\right)^2 \ .
\end{equation}
Moreover:
\begin{align*}
r^\prime(x)=\left(\frac{h^{\prime}(x)}{h(x)}\right)^\prime=\frac{h(x)h^{\prime \prime}(x)-h^{\prime}(x)^2}{h(x)^2} \ .
\end{align*}
Using (\ref{eq:DGLforh2}), we can replace $h^{\prime}(x)^2$ as well as  $h^{\prime \prime}(x)$ in the above equation by a polynomial expression in $h(x)$.
If we put this result for $r^\prime(x)$ together with (\ref{eq:DGLforh3}) into (\ref{eq:werwer}), we get a relation of the form $\sum_{j=-8}^8 d_j\cdot h(x)^j=0$, where the coefficients $d_j$ are polynomials in $p_1,\ldots ,p_4$.
Because of $h(x)=1/x+O(1)$, it follows that all these coefficients $d_j$ must vanish.
An elementary but tedious calculation yields for example:
\begin{align*}
d_{-4}&= \frac{1}{144}p_3^4+p_1\cdot (\text{some polynomial in $p_1,p_2,p_3$ and $p_4$}) \ , \\ 
d_1&=\frac{1}{24}p_1\left(4p_3^2-18p_1p_2p_3+27p_1^2p_4\right) \ , \\ 
d_2&= \frac{1}{4}p_1\left(2p_2p_3-3p_1p_4\right) \ . 
\end{align*}

We now distinguish the cases $p_1\neq 0$ and $p_1= 0$ and use that the $p_i$'s are elements in an integral $\mathbb Q$-algebra $R$.
First of all note that the above calculations imply $d_1+\frac{3}{2}p_1d_2=\frac{1}{6}p_1p_3^2$.
If $p_1\neq 0$, the vanishing of $d_1$ and $d_2$ therefore imply $p_3=0$. 
Then $d_2=0$ shows that also $p_4$ vanishes, i.e. $\varphi$ factors through $\varphi_{Oc}$.
If $p_1= 0$, then the vanishing of $d_{-4}$ immediately yields $p_3=0$, i.e. $\varphi$ factors through $\varphi_{Oc}$.
\end{proof}

\section{The $q$-expansion of $\psi$} \label{sec:cusp}
In the past it turned out that the $q$-expansion of modular forms gives interesting insight into the geometric behaviour of elliptic genera.
For instance, using this method one can show that on a complex manifold $M$ Krichever-Höhn's complex elliptic genus equals the holomorphic Euler characteristic of a certain vector bundle, associated to the tangent bundle of $M$, see \cite{Hohn91,totaro}.
This section's aim is to derive a similar result for the elliptic genus $\psi$ from Definition \ref{def:phiell}.

To begin with, we need to give an alternative description of the characteristic power series of $\psi$. 
Therefore, let us define $\mathcal A_k$  to be the $\mathbb C$-vector space of meromorphic functions $f$ on $\mathbb H\times \mathbb C^2$ such that:
\begin{enumerate}
\item[(J1)] $f(\tau,w,z)$ is elliptic with respect to the lattice $2\pi i\left(\mathbb Z\tau\oplus  \mathbb Z\right)$ in $w$ and $z$. \label{J1}
\item[(J2)] $f(\frac{a\tau+b}{c\tau+d},\frac{w}{c\tau+d},\frac{z}{c\tau+d})\cdot (c\tau+d)^{-k}=f(\tau, w, z)$ for all $\begin{pmatrix} a & b\\ c & d  \end{pmatrix} \in PSL(2,\mathbb Z)$. \label{J2}
\end{enumerate}
Endowing elements in $\mathcal A_k$ with weight $2k$ turns the direct sum $\mathcal A_\ast :=\bigoplus_{i=0}^\infty \mathcal A_k$ into a graded $\mathbb Q$-algebra.
(Elements in $\mathcal A_k$ which satisfy an additional regularity condition are so-called meromorphic Jacobi forms, cf. \cite{zagier}.)
For now and the following, we write $q=e^{2\pi i \tau}$, $s=e^{z}$ and $y=-e^w$, and it follows from (J1) that functions in $\mathcal A_\ast$ in fact depend on $q,s$ and $y$ rather than $\tau,w$ and $z$.
The next (technical) Lemma is essential for the results in this section.

\begin{lemma} \label{lem:cusp}
There is an injective homomorphism $\xi:\mathbb Q[q_1,q_2,q_3,q_4] \rightarrow \mathcal A_\ast$ of graded $\mathbb Q$-algebras such that the characteristic power series 
of the genus $\xi \circ \psi$ is given by
\begin{equation*}
x\cdot \mu(q,s,y)\cdot \prod_{l=1}^{\infty} \left( \frac{\left( 1+y^{-1}q^le^x \right) \left( 1+yq^{l-1}e^{-x} \right)  
\left( 1+\frac{y}{s}q^l e^x \right) \left( 1+\frac{s}{y} q^{l-1}e^{-x} \right)}
{\left( 1-q^le^x \right) \left( 1-q^{l-1}e^{-x} \right)  \left( 1-s^{-1}q^l e^x \right) \left( 1-sq^{l-1} e^{-x}\right)}\right) \ ,
\end{equation*}
where $\mu$, not depending on $x$, is given by
\begin{equation*}
\mu(q,s,y):=\prod_{l=1}^{\infty}
\left( \frac{\left(1-s^{-1}q^l\right)\left(1-sq^{l-1}\right)\left(1-q^l\right)^2}
{\left(1+\frac{y}{s}q^l\right) \left(1+\frac{s}{y}q^{l-1} \right) \left(1+y^{-1}q^l\right) \left(1+yq^{l-1}\right) } \right) \ .
\end{equation*}
\end{lemma}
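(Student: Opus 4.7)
The plan is to exploit Lemma \ref{lem:h=sigmaprod}, which already writes $h_\psi(x)$ as a ratio of Weierstra{\ss} $\sigma$-functions for a lattice $L$ constructed from $(q_1,q_2,q_3,q_4)$, and to convert this expression into the desired infinite product by invoking the classical product formula for $\sigma$ (equivalently, the Jacobi triple product for $\theta_1$). Concretely, I first fix the one-parameter family of lattices $L_\tau:=2\pi i(\mathbb Z\tau\oplus\mathbb Z)$ and define $\xi$ by running Lemma \ref{sol=ellfun} backwards: each triple $(\tau,w,z)\in\mathbb H\times\mathbb C^2$ determines a quadruple of $q_i$-values via the explicit rational expressions there in $g_2(L_\tau)$, $g_3(L_\tau)$, $\wp(w;L_\tau)$, $\wp'(w;L_\tau)$, $\wp(z;L_\tau)$ and $\wp'(z;L_\tau)$, and I set $\xi(q_i)$ to be the corresponding meromorphic function on $\mathbb H\times\mathbb C^2$.

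To verify that $\xi(q_i)\in\mathcal A_i$, ellipticity (J1) in $w$ and $z$ is immediate from the corresponding properties of $\wp$, while modularity (J2) follows from the homogeneity laws $\wp(\lambda u;\lambda L)=\lambda^{-2}\wp(u;L)$, $g_2(\lambda L)=\lambda^{-4}g_2(L)$, $g_3(\lambda L)=\lambda^{-6}g_3(L)$, combined with the identity $L_{(a\tau+b)/(c\tau+d)}=(c\tau+d)^{-1}L_\tau$ up to an $SL_2(\mathbb Z)$-change of basis; the weights come out to $2i$ as required.

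For the characteristic power series of $\xi\circ\psi$, Lemma \ref{lem:h=sigmaprod} gives
\[
Q_{\xi\circ\psi}(x)=x\cdot\frac{\sigma(x-w)\sigma(x+w-z)\sigma(-z)}{\sigma(x)\sigma(x-z)\sigma(w-z)\sigma(-w)},
\]
with $\sigma=\sigma(\cdot;L_\tau)$. Writing $\sigma(u;L_\tau)$ as a quadratic-exponential factor in $u$ times $\theta_1(u/(2\pi i),\tau)/\theta_1'(0,\tau)$ and inserting the Jacobi triple product for $\theta_1$ converts each $\sigma$-factor into a product involving $(1-q^l e^{\pm u})$-terms. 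The quadratic exponents cancel from the ratio because of the identity
\[
(x-w)^2+(x+w-z)^2+z^2=x^2+(x-z)^2+(w-z)^2+w^2,
\]
so no factor of the form $\exp(\text{quadratic in }x)$ survives. Rewriting the residual sine factors via $2\sinh(u/2)=e^{u/2}-e^{-u/2}$ and separating factors depending on $x$ from those depending only on $(w,z,\tau)$ -- the latter bundled into $\mu(q,s,y)$ with $s=e^z$, $y=-e^w$ and $q=e^{2\pi i\tau}$ -- produces precisely the product stated in the lemma.

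For injectivity of $\xi$, by Lemma \ref{lem:phi=surj} it suffices to show that $\xi(\psi(\mathbb CP^1)),\ldots,\xi(\psi(\mathbb CP^4))$ are algebraically independent over $\mathbb Q$ in $\mathcal A_\ast$; this can be checked concretely by computing the leading $q$-expansion coefficients of the four resulting forms, or by specializing at a sufficiently generic $(\tau_0,w_0,z_0)$ and invoking the algebraic independence of the modular forms $g_2$ and $g_3$. The main obstacle is the careful bookkeeping in the $\sigma$-to-product conversion: matching the $\sinh$ factors with the $(1-e^x)(1-e^{-x})$-type terms, identifying the shifts by $s^{\pm1}$ and $y^{\pm1}$ with the arguments $\pm z$ and $\pm w$, and checking that all $\tau$-dependent normalizations (the $q^{1/8}$'s, the $\theta_1'(0,\tau)$'s, and any quadratic-exponential prefactors not cancelled by the identity above) combine exactly into the factor $\mu(q,s,y)$. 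The cancellation of the $x$-quadratic exponents is the conceptual core; the infinite-product bookkeeping is where the technical effort lies.
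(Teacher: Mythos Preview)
Your approach is essentially the same as the paper's. The paper defines $\xi$ by exactly the formulas you describe (inverting Lemma~\ref{sol=ellfun}), invokes Lemmata~\ref{sol=ellfun} and~\ref{lem:h=sigmaprod} on the dense locus where the discriminant is nonzero to obtain the $\sigma$-quotient, and then converts $\sigma$ to an infinite product via the auxiliary function $\Phi(\tau,x)=e^{-G_2(\tau)x^2-x/2}\sigma(\tau,x)$, which is just your $\theta_1$ step in different packaging; the paper dismisses the exponential cancellation you spell out as ``an elementary calculation,'' and its injectivity argument is likewise a one-line appeal to standard facts about algebraic independence of the relevant elliptic and modular functions.
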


\begin{proof}
Let $q_1,q_2,q_3$ and $q_4$ be variables of degree $2,4,6$ and $8$, consider the polynomial $P(t)=t^4+q_1t^3+q_2t^2+q_3t+q_4$ and write $P(t-q_1/4)=t^4+\tilde q_2t^2+\tilde q_3t+\tilde q_4$.
In view of Lemma \ref{sol=ellfun}, we define the homomorphism $\xi:\mathbb Q[q_1,q_2,q_3,q_4] \rightarrow \mathcal A_\ast$ via
\begin{equation} \label{def:xi}
\begin{split}
&q_1\mapsto -2\cdot \frac{\wp^\prime(L_\tau,w)+\wp^\prime(L_\tau,z)}{\wp(L_\tau,w)-\wp(L_\tau,z)}\ \ ,\ \ \\
&\tilde q_2 \mapsto -6\wp(L_\tau,z)\ \ , \\ 
&\tilde q_3 \mapsto 4\wp^\prime(L_\tau,z)\ \ ,\\
&\tilde q_4 \mapsto g_2(L_\tau)-3\wp(L_\tau,z)^2\ \ ,
\end{split}
\end{equation}
where $L_\tau$ denotes the lattice $2\pi i\left(\mathbb Z\tau\oplus  \mathbb Z\right)$, $\wp$ the Weierstra\ss\ $\wp$-function and $g_2(L)$ is the second modular invariant of the elliptic curve $\mathbb C/L$. 
By standard facts about elliptic functions and modular forms, the homomorphism $\xi$ is well-defined and the images of $q_1$, $\tilde q_2$, $\tilde q_3$ and $\tilde q_4$ are algebraically independent over $\mathbb Q$. 
Thus, $\xi$ is injective.

\begin{claim} \label{claim2:cusp}
The characteristic power series of $\xi \circ \psi$ is given by $x\cdot h(x)$, where
\begin{equation} \label{eq:Qxipsi}
h(x)=\frac{\sigma(\tau,x-w)\sigma(\tau,x+w-z)\sigma(\tau,-z)}{\sigma(\tau,x)\sigma(\tau,x-z)\sigma(\tau,w-z)\sigma(\tau,-w)} \ .
\end{equation}
Here $\sigma(\tau,-)$ denotes the Weierstra\ss\ $\sigma$-function with respect to the lattice $L_\tau$.
\end{claim}

\begin{proof}
As (\ref{eq:Qxipsi}) is an identity of power series with coefficients in $\mathcal A_\ast$, it is enough to show that this identity holds for all $(\tau,w,z)$ in a dense subset $V\subseteq \mathbb H\times \mathbb C^2$.
By Lemma \ref{lem:h=solofdgl}, the characteristic power series 
of $\xi \circ \psi$ equals $x\cdot h(x)$, where $h(x)=1/x+O(1)$ satisfies 
\[
h^\prime(x)^2=h(x)^4+\xi(q_1)h(x)^3+\xi(q_2)h(x)^2+\xi(q_3)h(x)+\xi(q_4) \ .
\]
However, the definition of $\xi$ is cooked up in such a way that Lemma \ref{sol=ellfun} and \ref{lem:h=sigmaprod} state that (\ref{eq:Qxipsi}) is true for all $(\tau,w,z)$ where the complex polynomial
\[
t^4+\xi(q_1)(\tau,w,z)\cdot t^3+\xi(q_2)(\tau,w,z)\cdot t^2+\xi(q_3)(\tau,w,z)\cdot t+\xi(q_4)(\tau,w,z)
\]
has non-vanishing discriminant.
Those $(\tau,w,z)$ clearly form a dense subset $V\subseteq \mathbb H\times \mathbb C^2$ and we are done.
\end{proof}

To finish the proof of the Lemma, it remains to see that the function in (\ref{eq:Qxipsi}) has the claimed  $q$-expansion.
Similarly to Appendix I in \cite{HirzManiModForms} we therefore define
\begin{equation} \label{eq:Phi}
\Phi(\tau,x):=e^{-G_2(\tau)\cdot x^2-x/2} \sigma(\tau,x) \ ,
\end{equation}
where $G_2$ is the Eisenstein series of weight $2$. 
Our definition differs from \cite{HirzManiModForms} by a factor $e^{-x/2}$.
Thus, according to \cite[p. 145]{HirzManiModForms}:
\begin{equation} \label{eq:cusp1}
\Phi(\tau,x)= \prod_{l=1}^{\infty}\frac{\left(1-q^le^x \right) \left(1-q^{l-1}e^{-x} \right) }{(1-q^l)^2} \ .
\end{equation}
An elementary calculation using (\ref{eq:Phi}) shows: 
\begin{equation*}
 \frac{\sigma(\tau,x-w)\sigma(\tau,x+w-z)\sigma(\tau,-z)}{\sigma(\tau,x)\sigma(\tau,x-z)\sigma(\tau,w-z)\sigma(\tau,-w)}= \frac{\Phi(\tau,x-w)\Phi(\tau,x+w-z)\Phi(\tau,-z)}{\Phi(\tau,x)\Phi(\tau,x-z)\Phi(\tau,w-z)\Phi(\tau,-w)} \ .
\end{equation*}
Thus, Lemma \ref{lem:cusp} follows from (\ref{eq:cusp1}) and Claim \ref{claim2:cusp}.
\end{proof}

For the following two subsections, since $\xi$ of Lemma \ref{lem:cusp} is injective, we may identify $\psi$ with the genus $\xi\circ \psi:\Omega_\ast^U \otimes \mathbb Q \rightarrow \im(\xi)$.
The characteristic power series of $\psi$, given by Lemma \ref{lem:cusp}, 
has no pole in $q=0$, i.e. $\psi$ has values in $\mathbb Q((s,y))[[q]]$, the ring of power series in $q$ whose coefficients are Laurent series over $\mathbb Q$ in $s$ and $y$.
(Note that elements in the image of $\psi$ are still graded via condition (J2).)

\subsection{The degenerate $\psi$-genus} \label{subsec:degellgen}
Sending $q$ to $0$ induces a $\mathbb Q$-algebra homomorphism $\zeta:\mathbb Q((s,y))[[q]] \rightarrow \mathbb Q((s,y))$.
It follows directly from Lemma \ref{lem:cusp} that the characteristic power series of $\zeta\circ\psi$ equals $x\cdot h_{\zeta\circ \psi}(x)$ with
\begin{equation} \label{eq:hdegen}
h_{\zeta\circ \psi}(x)= \frac{ \left(1+ye^{-x}\right)\left(1+\frac{s}{y} e^{-x}\right)\left(1-s\right)}
{\left(1-e^{-x}\right) \left(1-se^{-x}\right)\left(1+\frac{s}{y}\right)\left(1+y\right)} \ .
\end{equation}
For fixed $\tau \in \mathbb H$, the function in (\ref{eq:Qxipsi}) is an elliptic function.
In the limit of degenerate lattices, i.e. $\tau\rightarrow i\cdot \infty$, this elliptic function degenerates to (\ref{eq:hdegen}).
Note that this power series has coefficients in $\mathbb Q(s,y)$, the ring of rational functions in $s$ and $y$.
As we lose the grading under the map $\zeta$, we modify $\zeta\circ \psi$ slightly:

\begin{definition} \label{def:psideg}
The degenerate $\psi$-genus 
\[\psi^{deg}:\Omega_\ast^U\otimes \mathbb Q \rightarrow \mathbb Q(s,y)[t]
\] 
is defined via $M\mapsto  \left(\zeta\circ \psi\right)(M)\cdot t^{n}$, where $M$ has real dimension $2n$.
Thereby $s$ and $y$ have degree $0$ and $t$ is a variable of degree $2$.
\end{definition}

We already explained in section \ref{sec:KHgenus} that the $\chi_y$-genus is dualization invariant and by Theorem \ref{thm1'} factors through $\psi$.
At this point we obtain this result in a more explicit way and see that it even factors through the degenerate $\psi$-genus.

\begin{proposition} \label{phicusp=wellknown}
The $\chi_y$-genus factors through the degenerate $\psi$-genus $\psi^{deg}$.
\end{proposition}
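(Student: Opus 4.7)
My plan is to construct the factorization $\chi_y = \rho \circ \psi^{deg}$ by exhibiting $\rho$ explicitly as a substitution in $\mathbb Q(s,y)[t]$. The starting point will be the explicit characteristic power series $x\cdot h_{\zeta\circ\psi}(x)$ of $\zeta\circ\psi$ recorded in \eqref{eq:hdegen}. Specializing $s=0$, every $s$-dependent factor in numerator and denominator of $h_{\zeta\circ\psi}$ collapses to $1$, producing
\[
\left. x\cdot h_{\zeta \circ \psi}(x)\right|_{s=0} = \frac{x(1+ye^{-x})}{(1+y)(1-e^{-x})},
\]
which is exactly $\tfrac{1}{1+y}$ times the per-Chern-root factor $x(1+ye^{-x})/(1-e^{-x})$ entering the definition \eqref{def:chiy} of the $\chi_y$-genus.

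Since this rescaling is by a constant (independent of $x$), Hirzebruch's formula produces the $n$-th power on an $n$-dimensional complex manifold. I will use this to conclude that for every stably almost complex $M$ of complex dimension $n$
\[
\psi^{deg}(M)\big|_{s=0} = \frac{1}{(1+y)^n}\cdot \chi_y(M).
\]
The two compensating substitutions $s\mapsto 0$ and $t\mapsto (1+y)t$ will then be packaged as a single graded $\mathbb Q$-algebra homomorphism $\rho \colon \mathbb Q(s,y)[t] \to \mathbb Q[y,t]$ with $\rho(y)=y$. Since $s,y$ have degree $0$ and $t$ has degree $2$, $\rho$ preserves the grading. Writing $\psi^{deg}(M) = f(s,y)\, t^n$ by grading, a one-line verification
\[
\rho\bigl(\psi^{deg}(M)\bigr) = f(0,y)\,(1+y)^n t^n = (1+y)^n\,\psi^{deg}(M)\big|_{s=0} = \chi_y(M)
\]
will then complete the proof.

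The one point that will require checking is that $\rho$ is well-defined on $\im(\psi^{deg})$, i.e.\ that $\psi^{deg}(M)$ is regular at $s=0$. I expect this to follow directly from the product formula in Lemma \ref{lem:cusp}: the potentially problematic denominator factors $(1-se^{-x})$ and $(1+s/y)$ both evaluate to $1$ at $s=0$, so every coefficient of the Laurent expansion of $h_{\zeta\circ\psi}(x)$ in $x$ is a rational function in $s,y$ that is regular at $s=0$. This is really the only subtle point; the remainder of the argument is a routine comparison of Hirzebruch characteristic classes.
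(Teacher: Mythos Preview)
Your proposal is correct and matches the paper's proof essentially line for line: the paper also defines the map $\eta$ (your $\rho$) on $\im(\psi^{deg})$ by $s\mapsto 0$, $t\mapsto (1+y)t$, verifies well-definedness at $s=0$ from the explicit formula \eqref{eq:hdegen}, and then identifies the resulting characteristic power series with that of $\chi_y$. The only cosmetic difference is that the paper takes the target to be $\mathbb Q(y)[t]$ rather than $\mathbb Q[y,t]$, and that the regularity at $s=0$ is read off directly from \eqref{eq:hdegen} rather than from Lemma~\ref{lem:cusp}.
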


\begin{proof}
It follows from (\ref{eq:hdegen}) that for any manifold $M$ the rational function $\psi^{deg}(M)$ has no pole in $s=0$.
Therefore, $s\mapsto 0$ and $t\mapsto (1+y) t$ induces a ring homomorphism $\eta:im(\psi^{deg})\rightarrow \mathbb Q(y)[t]$.
For some stably almost complex manifold $M$ in real dimension $2n$ with Chern roots $x_1,\ldots , x_n$, we then have:
\[
\left(\eta\circ\psi^{deg}\right)(M)=\int_M\left(\prod_{i=1}^n x_i\cdot \frac{ \left(1+y\cdot e^{-x_i}\right)}
{\left(1-e^{-x_i}\right) \left(1+y\right)}\right)\cdot(1+y)^nt^n =\chi_y(M) \ ,
\]
where we used the definition of the $\chi_y$-genus given in (\ref{def:chiy}). 
Thus: $\eta\circ \psi^{deg}=\chi_y$.
\end{proof}

\begin{remark}
Composing $\psi^{deg}$ with the map $s\mapsto -1$, $y\mapsto 0$, $t\mapsto 1/4$ shows that the $\hat{A}$-genus also factors through $\psi^{deg}$.
This result is parallel to an observation of G. Höhn in \cite{Hohn91}, who showed that the $\chi_y$-genus as well as the $\hat{A}$-genus factor through the degenerate version of Krichever-Höhn's complex elliptic genus, see also \cite[pp. 787-790]{totaro}.
\end{remark}

\subsection{The $\psi$-genus equals a holomorphic Euler characteristic} \label{subsec:eulerchar}

For a complex vector bundle $E$ of rank $k$ we define the following polynomial respectively power series in $x$, whose coefficients are certain exterior respectively symmetric powers of the bundle $E$:
\begin{align} \label{def:bundlepower}
\Lambda_x(E):=\bigoplus_{i=0}^k \Lambda^{i}E\cdot x^{i} \ \ , \ \ 
S_x(E):=\bigoplus_{i=0}^\infty S^{i}E\cdot x^{i}\ .
\end{align}
With this notation, we define for every complex manifold $M$ with holomorphic tangent bundle $TM$ the Laurent series
\begin{equation*} 
\begin{split}
\Theta(M):=\bigotimes_{l=1}^\infty \left(\Lambda_{y^{-1}q^l}TM \otimes \Lambda_{yq^{l-1}} T^{\ast}M
\otimes \Lambda_{\frac{y}{s}q^l}TM \otimes \Lambda_{\frac{s}{y}q^{l-1}}T^{\ast}M  \right. \\
\left. \otimes S_{q^l}TM\otimes S_{q^{l}}T^{\ast}M 
\otimes S_{s^{-1}q^l}TM\otimes S_{sq^{l-1}}T^{\ast}M\right) \ .
\end{split}
\end{equation*}
Staring at this definition, one sees that $\Theta(M)$ can be written in the form
\begin{equation} \label{def:Theta}
\Theta(M)=\sum_{l=0}^\infty \ \sum_{i,j \:\in\: \mathbb Z}E_{i,j,l}\cdot s^iy^jq^l \ ,
\end{equation}
where every $E_{i,j,l}$ is a finite expression in some exterior and symmetric powers of the tangent and cotangent bundle of $M$.
If $M$ is a complex manifold, then these coefficients are holomorphic vector bundles and for such a bundle $E\rightarrow M$ we define the holomorphic Euler characteristic $\chi(M,E)$ of $E$ in sheaf cohomology via $\sum_i(-1)^iH^i(M,E)$.
Moreover, we define the holomorphic Euler characteristic of $\Theta (M)$ coefficient-wise:
\[
\chi(M,\Theta(M)):=\sum_{l=0}^\infty \ \sum_{i,j \:\in\: \mathbb Z}\chi\left(M,E_{i,j,l}\right)\cdot s^iy^jq^l \ .
\]
We will see in the following Proposition that this Euler characteristic basically equals to the $\psi$-genus of Definition \ref{def:phiell}.

\begin{proposition} \label{prop:psi=eulerchar}
For a complex $n$-manifold $M$, the $\psi$-genus equals
\begin{equation*}  
\psi(M)=\mu(q,s,y)^n\cdot \chi(M,\Theta(M)) \ ,
\end{equation*}
where $\mu(q,s,y)\in \mathbb Q((s,y))[[q]]$, defined in Lemma \ref{lem:cusp}, depends not on $M$. 
\end{proposition}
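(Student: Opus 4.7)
The plan is to reduce the claim to the Hirzebruch--Riemann--Roch theorem, comparing the characteristic power series of $\psi$ provided by Lemma~\ref{lem:cusp} with $\mu^n\cdot \ch(\Theta(M))\cdot\td(M)$. Denote the Chern roots of $TM$ by $x_1,\ldots,x_n$; since $M$ is complex, the Chern roots of $T^\ast M$ are $-x_1,\ldots,-x_n$. By the splitting principle together with Lemma~\ref{lem:cusp},
\[
\psi(M)=\mu(q,s,y)^n\cdot \int_M\prod_{i=1}^n x_i\cdot P(x_i),
\]
where $P(x)$ denotes the infinite product appearing in that Lemma. It therefore suffices to show the pointwise identity $\prod_i x_i P(x_i)=\td(M)\cdot\ch(\Theta(M))$, so that the claim follows from applying HRR coefficientwise to the expansion~(\ref{def:Theta}) of $\Theta(M)$.

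To establish this, I would first expand $\ch(\Theta(M))$ using the standard identities $\ch(\Lambda_xE)=\prod_j(1+xe^{e_j})$ and $\ch(S_xE)=\prod_j(1-xe^{e_j})^{-1}$ for a bundle $E$ with Chern roots $e_j$. Applied termwise to the defining tensor product of $\Theta(M)$, this expresses $\ch(\Theta(M))$ as a product over $i=1,\ldots,n$ and $l\geq 1$ of precisely the four numerator factors that appear in $P(x_i)$, together with the four denominator factors $(1-q^le^{x_i})$, $(1-q^le^{-x_i})$, $(1-s^{-1}q^le^{x_i})$ and $(1-sq^{l-1}e^{-x_i})$. The only mismatch with the denominator of $P(x_i)$ is the second factor, where $P$ carries $(1-q^{l-1}e^{-x_i})$ rather than $(1-q^le^{-x_i})$. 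This is resolved by the elementary shift identity
\[
\prod_{l=1}^{\infty}\frac{1}{1-q^{l-1}e^{-x}}=\frac{1}{1-e^{-x}}\cdot\prod_{l=1}^{\infty}\frac{1}{1-q^le^{-x}},
\]
which extracts a single factor $(1-e^{-x_i})^{-1}$ from the infinite product.

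Combining these two observations yields $\prod_i P(x_i)=\ch(\Theta(M))\cdot\prod_i(1-e^{-x_i})^{-1}$, so that multiplication by $\prod_i x_i$ produces exactly the Todd class $\td(M)=\prod_i x_i/(1-e^{-x_i})$. Integration over $M$ together with the Hirzebruch--Riemann--Roch formula $\chi(M,E)=\int_M\ch(E)\cdot\td(M)$ (applied coefficientwise to the bundles $E_{i,j,l}$ in~(\ref{def:Theta})) then delivers the stated identity $\psi(M)=\mu^n\cdot\chi(M,\Theta(M))$. I do not expect a genuine obstacle here: once Lemma~\ref{lem:cusp} is granted, the argument is a bookkeeping exercise, and the role of the seemingly awkward prefactor $\mu^n$ is precisely to absorb the $x$-independent normalization in $Q_\psi(x)$ that is accounted for neither by $\td(M)$ nor by $\ch(\Theta(M))$. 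The one spot that requires care is that the $l=1$ term of $\Theta(M)$ in the $T^\ast M$ factor appears as $S_{q^l}T^\ast M$ rather than $S_{q^{l-1}}T^\ast M$, so the ``missing'' factor $(1-e^{-x_i})^{-1}$ has to be produced from the shift identity above and then merged with $x_i$ to form the Todd factor; this is the only bookkeeping point where one could easily go astray.
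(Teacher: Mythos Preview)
Your proposal is correct and follows essentially the same route as the paper: compute $\ch(\Theta(M))$ via the standard identities for $\Lambda_x$ and $S_x$, multiply by $\td(M)$, and match against the product formula of Lemma~\ref{lem:cusp} to reduce to Hirzebruch--Riemann--Roch applied coefficientwise. In fact you are more explicit than the paper about the one nontrivial bookkeeping point---the shift identity that turns the $(1-q^{l-1}e^{-x})^{-1}$ factor into $(1-e^{-x})^{-1}\prod_{l\ge1}(1-q^le^{-x})^{-1}$ and thereby produces the Todd factor $x/(1-e^{-x})$; the paper simply writes down the resulting integrand and asserts it matches Lemma~\ref{lem:cusp}.
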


\begin{proof}
Let us fix a complex $n$-manifold $M$ with Chern roots $w_1,\ldots ,w_n$ and consider some holomorphic vector bundle $E\rightarrow M$ with Chern roots $x_1,\ldots ,x_k$.
Then, by the Hirzebruch-Riemann-Roch Theorem, the holomorphic Euler characteristic of $E$ is given by 
\[
\chi(M,E)=\int_M \td(M)\cdot \ch(E) \ ,
\]
where $\td(M)=\prod_{i=1}^n\frac{w_i}{1-e^{-w_i}}$ is the Todd class of $M$ and $\ch(E)=\sum_{i=1}^k e^{x_i}$ the Chern character of $E$.
For another complex vector bundle $F\rightarrow M$, the Chern character is additive: $\ch(E\oplus F)=\ch(E)+\ch(F)$, and multiplicative: $\ch(E\otimes F)=\ch(E)\cdot \ch(F)$.
For a Laurent series whose coefficients are complex vector bundles, we define the Chern character coefficient-wise and it is clear that it is also additive and multiplicative in these series.
By this definition and the Hirzebruch-Riemann-Roch Theorem, the following remains to be proved:
\begin{equation} \label{eq:cusp:prop1}
\psi(M)=\mu(q,s,y)^n\cdot \int_M \td(M)\cdot \ch(\Theta(M)) \ .
\end{equation}
By \cite[pp. 11-12]{HirzManiModForms}, the Chern characters of the bundles defined in (\ref{def:bundlepower}) are given by
\begin{align*}
\ch\left(\Lambda_x(E)\right)=\prod_{i=1}^{k} \left(1+x\cdot e^{x_i}\right) \ \ \text{and}\ \ 
\ch\left(S_x(E)\right)= \prod_{i=1}^{k} \frac{1}{1-x\cdot e^{x_i}} \ .
\end{align*}
Therefore, the right hand side of (\ref{eq:cusp:prop1}) equals:
\begin{equation*}
\begin{split}
&\mu(q,s,y)^n\cdot \int_M \prod_{i=1}^n\left(\frac{w_i}{1-e^{w_i}}\ \cdot \right. \\
&\left. \prod_{l=1}^{\infty} \ \frac{\left( 1+y^{-1}q^le^{w_i} \right) \left( 1+yq^{l-1}e^{-w_i} \right)  
\left( 1+\frac{y}{s}q^l e^{w_i} \right) \left( 1+\frac{s}{y} q^{l-1}e^{-w_i} \right)}
{\left( 1-q^le^{w_i} \right) \left( 1-q^{l}e^{-w_i} \right)  \left( 1-s^{-1}q^l e^{w_i} \right) \left( 1-sq^{l-1} e^{-w_i}\right)}  \right)\ .
\end{split}
\end{equation*}
By Lemma \ref{lem:cusp}, this shows that the identity in (\ref{eq:cusp:prop1}) indeed holds true.
\end{proof}

An immediate consequence of the above Proposition and the definition of $\psi^{deg}$ in Definition \ref{def:psideg} is:

\begin{corollary}
The degenerate $\psi$-genus of a complex $n$-manifold $M$ is given by:
\begin{equation*} 
\psi^{deg}(M)=
\left( \frac{y\cdot \left(1-s\right)}
{\left(y+s \right) \left(1+y\right) }\right)^n\cdot \chi \left(M\ ,\ \Lambda_{y} T^{\ast}M\otimes \Lambda_{\frac{s}{y}}T^{\ast}M\otimes  S_{s}T^{\ast}M  \right) \cdot t^n  \ .
\end{equation*}
\end{corollary}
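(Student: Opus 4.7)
The corollary is essentially a direct specialization of Proposition \ref{prop:psi=eulerchar} under the map $\zeta:\mathbb Q((s,y))[[q]]\to\mathbb Q((s,y))$ that sends $q\mapsto 0$, followed by the rescaling in Definition \ref{def:psideg}. The plan is: start from
\[
\psi(M)=\mu(q,s,y)^n\cdot \chi(M,\Theta(M))\ ,
\]
apply $\zeta$ coefficient-wise, and multiply by $t^n$. Since the Euler characteristic is defined coefficient-wise as a series in $q$, $s$, $y$, applying $\zeta$ commutes with $\chi(M,-)$, so it suffices to compute $\zeta(\mu(q,s,y))$ and $\zeta(\Theta(M))$ separately.

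First I would evaluate $\mu$ at $q=0$. Inspecting the infinite product
\[
\mu(q,s,y)=\prod_{l=1}^{\infty}\frac{(1-s^{-1}q^l)(1-sq^{l-1})(1-q^l)^2}{(1+\tfrac{y}{s}q^l)(1+\tfrac{s}{y}q^{l-1})(1+y^{-1}q^l)(1+yq^{l-1})}\ ,
\]
every factor containing $q^l$ with $l\geq 1$ becomes $1$ when $q=0$, and every factor $q^{l-1}$ with $l\geq 2$ likewise becomes $1$. Only the $l=1$ terms with exponent $q^{l-1}=q^0$ survive, giving
\[
\mu(0,s,y)=\frac{1-s}{(1+\tfrac{s}{y})(1+y)}=\frac{y(1-s)}{(y+s)(1+y)}\ ,
\]
which is exactly the prefactor appearing in the corollary.

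Next I would apply the same pruning to $\Theta(M)$. Using $\Lambda_0 E=S_0 E=\underline{\mathbb C}$, at $q=0$ every tensor factor containing $q^l$ with $l\geq 1$ contributes trivially, and every factor with $q^{l-1}$ contributes trivially for $l\geq 2$. Only the $l=1$ factors in the three places where $q^{l-1}=1$ survive:
\[
\zeta(\Theta(M))=\Lambda_{y}T^{\ast}M\otimes \Lambda_{s/y}T^{\ast}M\otimes S_{s}T^{\ast}M\ .
\]
Combining the two computations and multiplying by $t^n$ then yields
\[
\psi^{deg}(M)=\left(\frac{y(1-s)}{(y+s)(1+y)}\right)^{n}\cdot \chi\bigl(M,\Lambda_{y}T^{\ast}M\otimes \Lambda_{s/y}T^{\ast}M\otimes S_{s}T^{\ast}M\bigr)\cdot t^{n}\ ,
\]
as claimed. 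There is no real obstacle here: the entire argument is the bookkeeping of which factors of the infinite products contribute to the $q^0$-coefficient, together with the trivial observation that $\chi(M,-)$ commutes with truncation in $q$. The substantive work has already been done in Proposition \ref{prop:psi=eulerchar}, whose proof furnished the product expansions that make the $q\to 0$ limit transparent.
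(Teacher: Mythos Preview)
Your proposal is correct and matches the paper's intended argument: the paper states the corollary as an immediate consequence of Proposition \ref{prop:psi=eulerchar} and Definition \ref{def:psideg}, which is exactly the specialization $q\mapsto 0$ followed by multiplication by $t^n$ that you carry out. Your explicit bookkeeping of which factors survive in $\mu$ and $\Theta(M)$ at $q=0$ simply spells out what the paper leaves implicit.
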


\section*{Acknowledgement}
First and foremost, I would like to thank D. Kotschick for his continued support and encouragement, and in particular for raising the question of determining the ideal $\mathcal I^U$.
Moreover, I am deeply grateful to him for giving me ample helpful advice and corrections during the development of this paper, respectively my Master Thesis.
Next, I would like to thank B. Totaro 
for raising the question which motivated Theorem \ref{thm:multiplic}, as well as C. McTague for providing me with some useful advice.
Finally, thanks to the referees and to P. Landweber for helpful suggestions.

\end{document}